\title{A theta expression of the Hilbert modular functions for  $\sqrt{5}$ via the  periods of $K3$ surfaces}
\author{Atsuhira Nagano}
\def\bigzerou{\smash{\lower1.7ex\hbox{\b 0}}}
\newtheorem{thm}{Theorem}[section]
\newtheorem{df}{Definition}[section]
\newtheorem{lem}{Lemma}[section]
\newtheorem{prop}{Proposition}[section]
\newtheorem{rem}{Remark}[section]
\newtheorem{cor}{Corollary}[section]
\def\comment#1{{ }}
\begin{document}
\maketitle

\begin{abstract}
In this paper, 
we give an extension of the classical story of the elliptic modular function to the Hilbert modular case for $\mathbb{Q}(\sqrt{5})$.
We construct the period mapping for a family $\mathcal{F}=\{S(X,Y)\}$ of $K3 $ surfaces with $2$ complex parameters $X$ and $Y$.
The inverse correspondence  of the period mapping gives a system of generators  of Hilbert modular functions for $\mathbb{Q}(\sqrt{5})$.
 Moreover, we show an explicit  expression of this inverse correspondence by theta constants.
\end{abstract}

\footnote[0]{Keywords:  $K3$ surfaces ;  Hilbert modular forms ;  partial differential equations }
\footnote[0]{Mathematics Subject Classification 2010:  14J28,  11F46,  33C05}
\footnote[0]{Running head: Hilbert modular  functions via  $K3$ surfaces}
\setlength{\baselineskip}{14 pt}

\section*{Introduction}

The symmetric Hilbert modular surface $(\mathbb{H}\times \mathbb{H})/\langle PSL(2,\mathcal{O}_K),\tau\rangle$,
where $\mathcal{O}_K$ is the ring of integers in a real quadratic field $K$
and $\tau$ exchanges the factors of $\mathbb{H}\times\mathbb{H}$,
 gives the moduli space for the  family  $\mathcal{F}_K=\{A\}$ of the principally polarized Abelian surfaces with an extra endomorphism structure $K(\subset {\rm End}^0 (A))$.

In the classical theory, the elliptic modular function $\lambda (z)$ on the moduli space $\mathbb{H}/\Gamma(2)$ is given by the inverse of the multivalued period mapping for a family of elliptic curves. 
This period mapping gives the Schwarz mapping of 
the Gauss hypergeometric differential equation $\displaystyle E\Big( \frac{1}{2},\frac{1}{2},1\Big)$.
It is important that the modular function $\lambda (z)$ has an explicit  expression given by the Jacobi theta constants. 

For the Hilbert modular cases, 
although there are various studies on the structure of the field of modular functions and the ring of modular forms
 (for example Gundlach \cite{Gundlach}, Hirzebruch \cite{Hirzebruch} and M\"uller \cite{Muller}),
still now,
to the best of the author's knowledge,
there has not appeared an explicit expression of  Hilbert modular functions as an inverse correspondence of the period mapping for a family of algebraic varieties.
In this paper, 
we give an extension of the above classical story to the Hilbert modular functions for $K=\mathbb{Q}(\sqrt{5})$ by using a family of $K3$ surfaces
that gives the same variation of  Hodge structures of weight $2$ with the family $\mathcal{F}_K$ of Abelian surfaces.
Namely,
we show that
the inverse  of the period mapping for our family of $K3$ surfaces
gives   Hilbert modular functions for $\mathbb{Q}(\sqrt{5})$.  
Moreover, 
we obtain an explicit theta expression of this inverse correspondence.
As our method, we use the fact that our  period integrals of $K3$ surfaces satisfy a system of partial differential equations determined in \cite{Nagano}.

Our results is obtained as a combined work  with \cite{Nagano}
and  based on  the results  of Hirzebruch \cite{Hirzebruch} and M\"uller \cite{Muller}  also.

In this paper, we consider the family 
 $\mathcal{F}=\{S(X,Y)\}$  of $K3$ surfaces  with $2$ complex parameters given by an affine equation in $(x,y,z)$-space:
$$
S(X,Y):     z^2=x^3 - 4 y^2(4y -5) x^2 + 20 Xy^3 x+Y y^4.
$$
We show that  a system of generators of the field of the Hilbert modular functions for $\mathbb{Q}(\sqrt{5})$ is given by the inverse of the period mapping for $\mathcal{F}$
and obtain an explicit expression of  these Hilbert modular functions given by theta constants.

We use the  following results of the Hilbert modular functions for $\mathbb{Q}(\sqrt{5})$.
Hirzebruch \cite{Hirzebruch} 
studied the Hilbert modular orbifold $\overline{(\mathbb{H}\times \mathbb{H})/\langle PSL(2,\mathcal{O}) , \tau\rangle}$,
where $\mathcal{O}=\mathbb{Z}+\displaystyle\frac{1+\sqrt{5}}{2}\mathbb{Z}$
and $\tau$ is an involution of $\mathbb{H}\times\mathbb{H}$,
 by an algebrogeometric method. 
He determined the structure of the ring of the symmetric Hilbert modular forms.
This ring is isomorphic to  the Klein 
icosahedral ring  $\mathbb{C}[\mathfrak{A},\mathfrak{B},\mathfrak{C},\mathfrak{D}]/(R(\mathfrak{A},\mathfrak{B},\mathfrak{C},\mathfrak{D})=0)$.
M\"{u}ller \cite{Muller} obtained a system of generators $\{g_2, s_6,s_{10},s_{15} \}$ of the ring of the symmetric Hilbert modular forms for $\mathbb{Q}(\sqrt{5})$ and found the relation $M(g_2,s_6,s_{10},s_{15})=0$.
These generators are given by  the  theta constants.
Then,  they are holomorphic functions on $\mathbb{H}\times\mathbb{H}$.

We  show that 
the period mapping for $\mathcal{F}$ gives a biholomorphic correspondence between the monodromy covering of $(X,Y)$-space and $\mathbb{H}\times\mathbb{H}$
and the projective monodromy group coincides with the extended Hilbert modular group $\langle PSL(2,\mathcal{O}) , \tau\rangle$.
Then, the quotient space $(\mathbb{H}\times\mathbb{H})/\langle PSL(2,\mathcal{O}),\tau \rangle$
becomes to be  the classifying space of the family $\mathcal{F}$. 
Consequently, we may regard $X$ and $Y$ as Hilbert modular functions for $\mathbb{Q}(\sqrt{5})$.
This framework  enable us to obtain 
 explicit relations between the result of \cite{Hirzebruch} and \cite{Muller}. 
Namely, we obtain an expression of the parameters $X$ and $Y$  as  quotients of    theta constants    by use of
the period mapping for
 our family $\mathcal{F}$ of $K3$ surfaces.

\begin{figure}[h]
\centering
\includegraphics{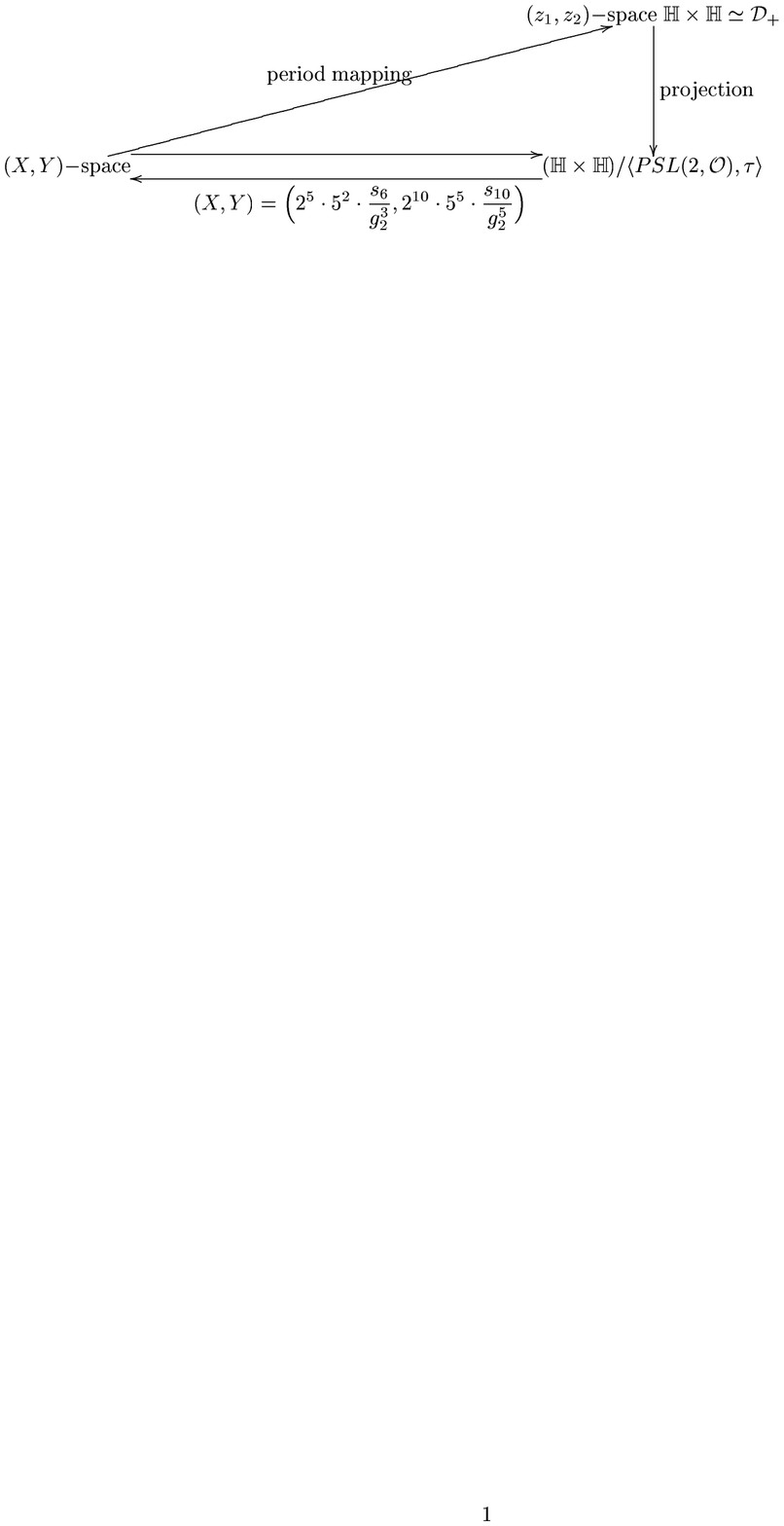}
\end{figure}

In Section 1, we give a survey of the results  of \cite{Nagano} and the properties of the Hilbert modular orbifold for $\mathbb{Q}(\sqrt{5})$.
Especially, we recall the family $\mathcal{F}_0$ of $K3$ surfaces and the period differential equation (\ref{periodUDE}) for $\mathcal{F}_0$.
A generic member of $\mathcal{F}_0$ is transformed to $S(X,Y)\in\mathcal{F}$.
The system (\ref{periodUDE}) turns out to be  the period differential equation for $\mathcal{F}$, that gives an analogy of the Gauss hypergeometric equation $\displaystyle {}_2 E_1\Big( \frac{1}{2},\frac{1}{2},1\Big)$.

In Section 2,  we study the $K3$ surface $S(X,Y)$.
First, we obtain the weighted projective space $\mathbb{P}(1,3,5)$ as a compactification of the $(X,Y)$-space  $\mathbb{C}^2$.
This
remains a parameter space for $K3$ surfaces 
except one point
(Theorem \ref{thm:compact}).
We note that, due to \cite{Hirzebruch} together with Klein \cite{Klein},
the orbifold $\overline{(\mathbb{H}\times\mathbb{H})/\langle PSL(2,\mathcal{O}),\tau \rangle}$
is isomorphic to $\mathbb{P}(1,3,5)$ as algebraic varieties.
Secondly, we define the multivalued period mapping $\mathbb{P}(1,3,5)-\{\rm one \hspace{1mm}point\} \rightarrow \mathcal{D}$ for $\mathcal{F}$, where $\mathcal{D} $ is a symmetric Hermitian space of type $IV$.
We have a modular isomorphism between $\mathbb{H}\times \mathbb{H}$ and a connected component $\mathcal{D}_+$ of $\mathcal{D}$.
Our period mapping gives an explicit isomorphism between $\mathbb{P}(1,3,5)$ and $\overline{(\mathbb{H}\times\mathbb{H})/\langle PSL(2,\mathcal{O}),\tau \rangle}$.
Then, we obtain
 the coordinates of $\mathbb{H}\times \mathbb{H}$ given by the quotients of period integrals of $S(X,Y)$:
\begin{eqnarray}\label{K3 z}
\big(z_1(X,Y),z_2(X,Y)\big)=\Bigg( -\frac{\displaystyle \int_{\Gamma_3}\omega + \frac{1-\sqrt{5}}{2}\int_{\Gamma_4}\omega}{\displaystyle \int_{\Gamma_2} \omega},-\frac{\displaystyle \int_{\Gamma_3}\omega +\frac{1+\sqrt{5}}{2}\int_{\Gamma_4} \omega}{\displaystyle \int_{\Gamma_2}\omega}\Bigg),
\end{eqnarray}
where $\Gamma_1,\cdots ,\Gamma_4$ are $2$-cycles on  $S(X,Y)\in\mathcal{F}$ given in Section 2.2.

Then, the inverse correspondence 
$
(z_1,z_2) \mapsto \big(X(z_1,z_2),Y(z_1,z_2)\big)
$
defines a pair of Hilbert modular functions for $\mathbb{Q}(\sqrt{5})$.
We obtain an expression of $X$ and $Y$ by the following way.

In Section 3,  we consider the subfamily $\mathcal{F}_X=\{S(X,0)\}$ of $K3$ surfaces.
The period mapping for $\mathcal{F}_X$ gives a correspondence 
between the  $X$-space and 
 the diagonal $\varDelta=\{(z_1,z_2)\in\mathbb{H}\times\mathbb{H}|z_1=z_2\}$.
We obtain the period differential equation for $\mathcal{F}_X$.
The solutions of this period differential equation are described in  terms of
 the solutions of the Gauss hypergeometric equation $\displaystyle {}_2E_1\Big(\frac{1}{12},\frac{5}{12},1\Big)$.
 Then, we obtain an expression of   the parameter $X$ in terms of  the elliptic $J$ function (Theorem \ref{j-functionThm}).

In Section 4, we obtain an explicit expression of the inverse of the period mapping (\ref{K3 z}) by  theta constants : 
$$
(X,Y)=\Big( 2^5 \cdot 5^2 \cdot \frac{s_6 (z_1,z_2)}{g^3_2(z_1,z_2)}, 2^{10}\cdot 5^5 \cdot\frac{s_{10} (z_1,z_2)}{g_2^{5} (z_1,z_2)}\Big)
$$
where  $g_2,s_6$ and $s_{10}$ are Hilbert modular forms given by M\"uller (Theorem \ref{main}).

Our  results in this paper are used in the forthcoming paper \cite{NaganoKummer},
in which  we shall show  a   simple and new  defining equations of the family of  Kummer surfaces for the Humbert surface of invariant $5$ and  a   geometric and intuitive  interpretation of period mappings for this family.

\section{Preliminaries}

\subsection{The family $\mathcal{F}_0$}

In \cite{Nagano}, we studied the family $\mathcal{F}_0=\{S_0(\lambda,\mu)\}$ of $K3$ surfaces defined by the equation
\begin{eqnarray} \label{S(L,M)}
S_0(\lambda,\mu): x_0 y_0 z_0^2 (x_0+y_0+z_0 +1) +\lambda x_0 y_0 z_0 +\mu=0,
\end{eqnarray}
where $(\lambda,\mu)\in \Lambda= \{(\lambda,\mu)| \lambda \mu (\lambda ^2 (4\lambda-1)^3 -2 (2+25\lambda (20\lambda-1))\mu -3125 \mu^2)\not =0)\}$.
First, we recall the results of this family.

Set 
\begin{eqnarray}\label{matrixA}
A=
\begin{pmatrix}
0 & 1&0 &0\\
1&0&0&0\\
0&0&2&1\\
0&0&1&-2
\end{pmatrix}.
\end{eqnarray}
Put
$$
\mathcal{D} = \{\xi=(\xi_1:\xi_2:\xi_3:\xi_4) \in \mathbb{P}^3(\mathbb{C})| \xi A{}^t\xi=0, \xi A {}^t \overline{\xi} >0)\}.
$$
This is a $2$-dimensional symmetric Hermitian space  of type $IV$.
Note that $\mathcal{D}$ is composed of two connected components: $\mathcal{D}=\mathcal{D}_+ \cup \mathcal{D}_-$.
We let $(1:1:-\sqrt{-1}:0) \in \mathcal{D}_+$. 
Set
$PO(A,\mathbb{Z})=\{g\in PGL(4,\mathbb{Z} ) | {}^tg A g=A \}$. It acts on $\mathcal{D}$ by ${}^t\xi \mapsto g{}^t\xi$.
Let $PO^+(A,\mathbb{Z})= \{ g\in PO(A,\mathbb{Z}) | g(\mathcal{D}_+) = \mathcal{D}_+\}$. 

In Section 2 of \cite{Nagano},
we had
the multivalued period mapping $\Phi_0:\Lambda\rightarrow \mathcal{D}_+$ for $\mathcal{F}_0$ 
 given by
\begin{align}\label{Period0}
\Phi_0 (\lambda,\mu)= \Big(\int_{\Gamma_1} \omega: \cdots:\int_{\Gamma_4}\omega \Big),
\end{align}
where 
$\omega$ is the unique holomorphic $2$-form on $S_0(\lambda,\mu)$ up to a constant factor
 and  $2$-cycles $\Gamma_1, \cdots ,\Gamma_4\in H_2(S_0(\lambda,\mu),\mathbb{Z})$ are given by this construction.

Let ${\rm NS}(S)$ be the N\'eron-Severi lattice of a $K3$ surface $S$.
The orthogonal complement ${\rm Tr}(S)={\rm NS}(S)^\bot$ in $H_2(S,\mathbb{Z})$ is called the transcendental lattice of $S$. 
 We proved
 
 \begin{thm}\label{NaganoThm} 
 {\rm (1) (\cite{Nagano} Theorem 2.2, 3.1)}
 For a generic point $(\lambda,\mu)\in\Lambda$, 
  the intersection matrix of   ${\rm NS}(S_0(\lambda,\mu))$ is given by 
 \begin{eqnarray}\label{NSE8E8}
 E_8(-1)\oplus E_8(-1) \oplus \begin{pmatrix} 2&1\\ 1&-2 \end{pmatrix}
 \end{eqnarray}
and the intersection matrix of ${\rm Tr}(S_0(\lambda,\mu))$ is given by 
\begin{eqnarray}\label{TrA}
U\oplus \begin{pmatrix} 2&1\\1&-2\end{pmatrix}=A.
\end{eqnarray}

 {\rm (2) (\cite{Nagano} Theorem 5.2)}
 The projective monodromy group of the period mapping $\Phi_0:\Lambda\rightarrow \mathcal{D}_+$ is isomorphic to $PO^+(A,\mathbb{Z})$.
  \end{thm}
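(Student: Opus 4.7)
The plan is to treat the two parts separately, since (1) is a lattice-theoretic/geometric statement about an individual K3 surface while (2) is a global statement about the variation over $\Lambda$.

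For part (1), my strategy would be to start by exhibiting an elliptic fibration on a smooth model of $S_0(\lambda,\mu)$. The defining equation $x_0y_0z_0^2(x_0+y_0+z_0+1)+\lambda x_0y_0z_0+\mu=0$ is a quartic in three variables, and the factorized shape of the leading term suggests that a suitable pencil of planes (or a projection after blowing up the singular locus on the coordinate planes) yields a genus-one fibration. Once an explicit fibration is in hand, I would determine the Kodaira types of the singular fibres by analysing the discriminant and the multiplicities along the components $\{x_0=0\}$, $\{y_0=0\}$, $\{z_0=0\}$, $\{x_0+y_0+z_0+1=0\}$. Then the Shioda--Tate formula computes the Picard rank from the ranks of the trivial lattice and the Mordell--Weil group. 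To obtain the specific form $E_8(-1)\oplus E_8(-1)\oplus\bigl(\begin{smallmatrix}2&1\\1&-2\end{smallmatrix}\bigr)$, I would aim to produce two disjoint reducible fibres of Kodaira type $II^*$ (each contributing an $E_8(-1)$ summand) together with two further independent divisors --- a section and one extra algebraic curve --- whose Gram matrix realises the $2\times 2$ block. Verifying primitivity of this sublattice of $H^2$ finishes the Picard group calculation for generic parameters.

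Once $\mathrm{NS}(S_0(\lambda,\mu))$ is identified, the transcendental lattice is forced by taking its orthogonal complement inside the K3 lattice $U^{\oplus 3}\oplus E_8(-1)^{\oplus 2}$. This is a routine computation: the discriminant form of the stated Néron--Severi lattice must match the discriminant form of $\mathrm{Tr}$ up to sign, and the only even lattice of signature $(2,2)$ with that discriminant form is $U\oplus\bigl(\begin{smallmatrix}2&1\\1&-2\end{smallmatrix}\bigr)$, which is $A$ of (\ref{matrixA}).

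For part (2), the projective monodromy automatically lies in $PO^+(A,\mathbb{Z})$: it acts on $\mathrm{Tr}(S_0(\lambda,\mu))$ preserving the intersection form $A$, and by continuity it preserves the connected component $\mathcal{D}_+$ containing the image of the base point. The non-trivial direction is surjectivity. I would produce explicit monodromy transformations by choosing loops in $\Lambda$ encircling the irreducible components of the discriminant $\{\lambda\mu(\lambda^2(4\lambda-1)^3-2(2+25\lambda(20\lambda-1))\mu-3125\mu^2)=0\}$; the Picard--Lefschetz formula then describes each such transformation as a reflection in a vanishing $(-2)$-class of $\mathrm{Tr}$. It remains to show that these reflections generate $PO^+(A,\mathbb{Z})$, which one can approach either by a direct generation argument for the arithmetic orthogonal group of $A$, or by the softer route of comparing dimensions: both $\Lambda$ and $\mathcal{D}_+/PO^+(A,\mathbb{Z})$ are two-dimensional, the period map factors through the monodromy quotient, and a Torelli-type bijectivity argument then forces the monodromy image to be of index one.

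The main obstacle I expect is part (1): pinning down the elliptic fibration whose singular fibres give two orthogonal $E_8(-1)$ summands, and verifying that these together with the additional hyperbolic-like block span the \emph{full} Picard lattice primitively for generic $(\lambda,\mu)$ rather than only a sublattice. Once the transcendental lattice is identified with $A$, part (2) becomes mostly a matter of finding enough vanishing cycles, and the surjectivity onto $PO^+(A,\mathbb{Z})$ follows from the structure of the arithmetic orthogonal group together with the fact that the period map is already known to be dominant.
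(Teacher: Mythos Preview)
The present paper does not prove this theorem; both parts are quoted from \cite{Nagano}. What the paper does supply (Section~2.2, for the birational model $S(X,Y)$) is the elliptic fibration actually underlying the computation: the singular fibres are of type $IV^*+5I_1+I_5^*$, and $\mathrm{NS}$ is spanned by the non-identity components of the $IV^*$ and $I_5^*$ fibres together with the general fibre $F$, the zero section $O$, and one further section $R$, giving a rank-$18$ lattice with $|\det|=5$. The two $E_8(-1)$ summands in the stated intersection form arise only after an abstract lattice isomorphism; they are \emph{not} realised as fibre lattices.

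Your plan for part (1) runs into a genuine obstruction at exactly this point. If an elliptic K3 surface with section carried two fibres of type $II^*$, the trivial lattice $\langle F,O\rangle\oplus E_8(-1)^{\oplus 2}\cong U\oplus E_8(-1)^{\oplus 2}$ would already be unimodular of rank $18$; when the Picard number equals $18$ this forces $\mathrm{NS}\cong U\oplus E_8(-1)^{\oplus 2}$, which has discriminant $1$ and hence cannot be isomorphic to $E_8(-1)^{\oplus 2}\oplus\bigl(\begin{smallmatrix}2&1\\1&-2\end{smallmatrix}\bigr)$ of discriminant $5$. Equivalently, there is no finite-index embedding of the former lattice into the latter. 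So no fibration with a section and two $II^*$ fibres exists on a K3 with the target N\'eron--Severi lattice, and the mechanism you propose for producing the $E_8(-1)$ summands cannot work; you would need to aim instead for a configuration such as $IV^*+I_5^*$ with an extra section. Your outline for part (2) --- containment of the monodromy in $PO^+(A,\mathbb{Z})$ automatically, and surjectivity via Picard--Lefschetz reflections around the discriminant components backed by a Torelli/index-one argument --- is along the right lines; the paper also records (Remark~\ref{UDE}) an alternative route to (2) through the uniformizing differential equation.
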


Moreover,  we determined the partial differential equation in $2$ variables  $\lambda$ and $\mu$ of rank $4$
that is satisfied
by the periods
 for the family $\mathcal{F}_0$.
 We call this equation the period differential equation for $\mathcal{F}_0$. 
This equation has the singular locus $\Lambda$
(see \cite{Nagano} Theorem 4.1).

\subsection{The Hilbert modular orbifold $\overline{(\mathbb{H}\times\mathbb{H})/\langle PSL(2,\mathcal{O}),\tau \rangle}$}

Here, we recall the action of the Hilbert modular group on $\mathbb{H} \times \mathbb{H}$.
Let $\mathcal{O}$ be the ring of integers in the real quadratic field $\mathbb{Q}(\sqrt{5})$.
Set $\mathbb{H}_{\pm }=\{z\in \mathbb{C}|\pm {\rm Im}( z) >0\}.$ The Hilbert modular group $PSL(2,\mathcal{O})$ acts on 
$(\mathbb{H}_+\times\mathbb{H}_+)\cup (\mathbb{H}_-\times\mathbb{H}_-)$ by 
\begin{eqnarray*}
\begin{pmatrix}
\alpha &\beta \\
\gamma &\delta 
\end{pmatrix}
:
(z _1, z_2)\mapsto 
\Big(\frac{\alpha z_1 +\beta}{\gamma z_1 +\delta} ,\frac{\alpha ' z_2+\beta'}{\gamma' z_2+\delta' }\Big),
\end{eqnarray*}
for 
$
g = \begin{pmatrix}\alpha&\beta\\ \gamma&\delta\end{pmatrix} \in PSL(2,\mathcal{O}),
$
where $'$ means the conjugate in $\mathbb{Q}(\sqrt{5})$.
We consider the involution
$$
\tau:(z_1,z_2)\mapsto(z_2,z_1)
$$
also.

\begin{df}
If a holomorphic function $g$ on $\mathbb{H}\times\mathbb{H}$
satisfies the transformation law
$$
g\Big(\frac{az_1+b}{cz_1+d},\frac{a'z_2+b'}{c'z_2+d'} \Big)= (c z_1+d)^k (c'z_2+d')^k g(z_1,z_2)
$$ 
for any $\displaystyle \begin{pmatrix} a&b\\ c&d\end{pmatrix}\in PSL(2,\mathcal{O})$,
we call g a Hilbert modular form of weight $k$ for $\mathbb{Q}(\sqrt{5})$.
If $g(z_2,z_1)=g(z_1,z_2)$,  $g$ is called a symmetric modular form.
If $g(z_2,z_1)=-g(z_1,z_2)$, $g$ is called an alternating modular form.

If a meromorphic function $f$ on $\mathbb{H}\times \mathbb{H}$ satisfies
$$
f\Big(\frac{az_1+b}{cz_1+d},\frac{a'z_2+b'}{c'z_2+d'} \Big)= f(z_1,z_2)
$$
for any $\displaystyle \begin{pmatrix} a&b\\ c&d\end{pmatrix}\in PSL(2,\mathcal{O})$,
we call $f$  a Hilbert modular function for $\mathbb{Q}(\sqrt{5})$.
\end{df}

Set 
$$
W = \begin{pmatrix}
1&1\\
\displaystyle \frac{1-\sqrt{5}}{2} &\displaystyle \frac{1+\sqrt{5}}{2}
\end{pmatrix}.
$$ 
It holds  that
$$
A = U \oplus \begin{pmatrix} 2&1\\ 1&-2 \end{pmatrix} = U \oplus WU{}^tW .
$$ 
The correspondence 
$$
j:(z_1,z_2) \rightarrow (z_1 z_2 : -1 : z_1 :z_2) (I_2\oplus W^{-1})
$$
defines a biholomorphic mapping
$$
(\mathbb{H}_+ \times \mathbb{H}_+) \cup (\mathbb{H}_- \times \mathbb{H}_-) \rightarrow \mathcal{D}.
$$
The group $PSL(2,\mathcal{O}) $ is generated by  three elements
$$
g_1 =\begin{pmatrix} 1&1 \\ 0&1 \end{pmatrix}, \quad g_2=\begin{pmatrix} \displaystyle \vspace*{0.2cm}1 &\displaystyle \frac{1+\sqrt{5}}{2} \\ 0&1 \end{pmatrix}, \quad g_3=\begin{pmatrix} 0 &1 \\ -1& 0 \end{pmatrix}.
$$
We have an isomorphism
\begin{align*}
\begin{matrix}
\tilde{j}&: &\langle PSL(2,\mathcal{O}),\tau \rangle &\rightarrow &PO^+(A,\mathbb{Z})\\
 &;  &g&\mapsto &j\circ g\circ j^{-1}=\tilde{j}(g)=:\tilde{g}.
\end{matrix}
\end{align*}
Especially, we see
\begin{align}\label{monodromyUDE}
\begin{cases}
& \tilde{g_1}=\begin{pmatrix} 
1& -1 &2 &1\\
0 & 1 &0&0\\
0&-1&1&0 \\
0&0&0&1
\end{pmatrix},
\quad \quad\quad
\tilde{g_2}=\begin{pmatrix}
1&-1&2&1\\
0&1&0&0\\
0&-1&1&0\\
0&1&0&1
\end{pmatrix}\\
&\tilde{g_3}=\begin{pmatrix}
0&-1&0&0\\
-1&0&0&0\\
0&0&-1&-1\\
0&0&0&1
\end{pmatrix},
\quad\quad\quad\quad
\tilde{\tau}=\begin{pmatrix}
1&0&0&0\\
0&1&0&0\\
0&0&1&1\\
0&0&0&-1
\end{pmatrix}.
\end{cases}
\end{align}
So, the above  $j$ gives  a modular isomorphism
\begin{eqnarray}\label{modulariso}
j:(\mathbb{H} \times \mathbb{H}, \langle PSL(2,\mathcal{O}),\tau\rangle) \simeq (\mathcal{D}_+,PO^+(A,\mathbb{Z})).
\end{eqnarray}
Recall $\mathcal{D}=\mathcal{D}_+\cup\mathcal{D}_-$ and the period mapping $\Phi$ for $\mathcal{F}_0$.
The mapping $j^{-1} \circ \Phi : \Lambda
\rightarrow
 \mathbb{H} \times \mathbb{H}$ gives an explicit transcendental correspondence 
between $\Lambda$ and $\mathbb{H} \times \mathbb{H}$.

\vspace{5mm}

Hirzebruch \cite{Hirzebruch} studied the Hilbert modular orbifold $\overline{(\mathbb{H} \times \mathbb{H})/ \langle PSL(2,\mathcal{O}),\tau\rangle}$. Here, we survey his results.

The Klein icosahedral polynomials are
\begin{align}\label{Klein}
\begin{cases}
\vspace*{0.2cm}
\mathfrak{A}(\zeta_0:\zeta_1:\zeta_2)=\zeta_0^2 +\zeta_1 \zeta_2,\\
\vspace*{0.2cm}
\mathfrak{B}(\zeta_0:\zeta_1:\zeta_2)=8\zeta_0^4 \zeta_1 \zeta_2 -2 \zeta_0^2 \zeta_1^2 \zeta_2^2 +\zeta_1^3 \zeta_2^3 -\zeta_0 (\zeta_1^5 + \zeta_2 ^5),\\
\mathfrak{C}(\zeta_0:\zeta_1:\zeta_2)=320\zeta_0^6 \zeta_1^2 \zeta_2^2 -160 \zeta_0^4 \zeta_1^3 \zeta_2^3 +20 \zeta_0^2 \zeta_1^4 \zeta_2^4 +6 \zeta_1^5 \zeta_2^5\\
\vspace*{0.2cm}
\quad\quad\quad\quad \quad\quad\quad\quad -4\zeta_0(\zeta_1^5+\zeta_2^5) (32 \zeta_0^4-20 \zeta_0^2 \zeta_1 \zeta_2 +5\zeta_1^2 \zeta_2^2)+\zeta_1^{10}+\zeta_2^{10},\\
12\mathfrak{D}(\zeta_0:\zeta_1:\zeta_2)=(\zeta_1^5-\zeta_2^5)(-1024 \zeta_0^{10}+3840 \zeta_0^8 \zeta_1 \zeta_2 -3840 \zeta_0^6 \zeta_1^2 \zeta_2^2\\
\quad\quad\quad\quad\quad\quad\quad \quad\quad\quad\quad\quad\quad\quad\quad\quad\quad\quad+1200 \zeta_0^4 \zeta_1^3 \zeta_2^3 -100\zeta_0^2 \zeta_1^4 \zeta_2^4 +\zeta_1^5 \zeta_2^5)\\
\quad\quad\quad\quad\quad\quad\quad \quad\quad\quad\quad\quad+\zeta_0(\zeta_1^{10}-\zeta_2^{10})(352\zeta_0^4 -160 \zeta_0^2 \zeta_1 \zeta_2 +10 \zeta_1^2 \zeta_2^2)+(\zeta_1^{15}-\zeta_2^{15}).
\end{cases} 
\end{align}
We have the following relation:
\begin{eqnarray} \label{Klein}
R(\mathfrak{A},\mathfrak{B},\mathfrak{C},\mathfrak{D}):=144\mathfrak{D}^2-(-1728\mathfrak{B}^5 +720\mathfrak{A}\mathfrak{C}\mathfrak{B}^3 -80 \mathfrak{A}^2 \mathfrak{C}^2 \mathfrak{B} +64\mathfrak{A}^3(5\mathfrak{B}^2-\mathfrak{A}\mathfrak{C})^2+\mathfrak{C}^3)=0.
\end{eqnarray}
Set
\begin{eqnarray}\label{XY}
X=\frac{\mathfrak{B}}{\mathfrak{A}^3},   \quad\quad\quad Y=\frac{\mathfrak{C}}{\mathfrak{A}^5}.
\end{eqnarray}

Now, set 
$$
\Gamma(\sqrt{5})=\Big\{\begin{pmatrix} \alpha& \beta \\ \gamma & \delta \end{pmatrix} \Big| \alpha \equiv \delta \equiv 1, \beta\equiv \delta \equiv 0\quad({\rm mod}\sqrt{5})  \Big\}.
$$
We note that the group $PSL(2,\mathcal{O})/\Gamma(\sqrt{5})$ is  isomorphic to the alternating group $\mathcal{A}_5$.
Hirzebruch \cite{Hirzebruch} studied the canonical bundle of the orbifold $\overline{(\mathbb{H}\times \mathbb{H})/  \Gamma(\sqrt{5})}$  by an algebrogeometric method. 
He proved

\begin{prop} \label{KleinH} {\rm (\cite{Hirzebruch} pp.307-310)}
{\rm (1)}    The  non-singular model of $\overline{(\mathbb{H}\times \mathbb{H})/\langle \Gamma(\sqrt{5}) ,\tau\rangle}$ is $\mathbb{P}^2(\mathbb{C})=\{(\zeta_0;\zeta_1;\zeta_2)\}$ by adding six points. 
A homogeneous polynomial of degree $k$ in $\zeta_0,\zeta_1$ and $\zeta_2$ defines a modular form for $\Gamma(\sqrt{5})$ of weight $k$.

{\rm (2) }
The ring of  symmetric modular forms for  $PSL(2,\mathcal{O})$ is isomorphic to the ring
\begin{align*}
\mathbb{C}[\mathfrak{A},\mathfrak{B},\mathfrak{C},\mathfrak{D}]/(R(\mathfrak{A},\mathfrak{B},\mathfrak{C},\mathfrak{D})=0),
\end{align*}
where $R(\mathfrak{A},\mathfrak{B},\mathfrak{C},\mathfrak{D})$ is the Klein relation {\rm (\ref{Klein})}.
$\mathfrak{A} $ {\rm(}$\mathfrak{B},\mathfrak{C},\mathfrak{D}$, resp.{\rm )} gives a
 symmetric modular form for $PSL(2,\mathcal{O})$ of weight $2$ {\rm (}$6,10,15$, resp.{\rm )}.

{\rm (3)} 
There exists an alternating modular form $\mathfrak{c}$ of weight $5$ such that $\mathfrak{c}^2=\mathfrak{C}$.
 The ring of Hilbert modular forms for $PSL(2,\mathcal{O})$ is isomorphic  to the ring
\begin{align*}
\mathbb{C}[\mathfrak{A},\mathfrak{B},\mathfrak{c},\mathfrak{D}]/(R(\mathfrak{A},\mathfrak{B},\mathfrak{c}^2,\mathfrak{D})=0).
\end{align*}
\end{prop}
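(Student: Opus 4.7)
The plan is to follow Hirzebruch's strategy, which rests on the short exact sequence
\begin{equation*}
1 \to \langle \Gamma(\sqrt{5}),\tau\rangle \to \langle PSL(2,\mathcal{O}),\tau\rangle \to \mathcal{A}_5 \to 1
\end{equation*}
together with Klein's classical invariant theory of the icosahedron. The principal obstacle will be part (1), where one must both uniformize the compactified quotient as $\mathbb{P}^2$ and pin down the identification of weight-$k$ automorphic forms with homogeneous degree-$k$ polynomials in $\zeta_0,\zeta_1,\zeta_2$; once part (1) is in place, parts (2) and (3) will reduce to standard icosahedral invariant theory.

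For part (1) I would first compactify $(\mathbb{H}\times\mathbb{H})/\langle \Gamma(\sqrt{5}),\tau\rangle$ by adjoining its cusps and resolving them via Hirzebruch--Jung strings, whose continued-fraction data are controlled by the fundamental unit $(1+\sqrt{5})/2$. Computing the canonical divisor of the resolution and invoking the Hirzebruch--Mumford proportionality principle should give Kodaira dimension $-\infty$; contracting the exceptional cusp trees and the curves of elliptic fixed points then identifies the minimal smooth model with $\mathbb{P}^2$, the six distinguished points of the statement being the images of the contracted cusp configurations. Under this uniformization the automorphic-form line bundle is compatible with $\mathcal{O}_{\mathbb{P}^2}(1)$, so that a weight-$k$ form becomes a homogeneous polynomial of degree $k$ and the dimension count $\binom{k+2}{2}$ matches the Hirzebruch--Mumford formula. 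The delicate step is the global analysis of vanishing orders of modular forms along the cusp resolutions and along the $\mathcal{A}_5$-fixed configurations.

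For part (2), the displayed exact sequence shows that a symmetric modular form for $PSL(2,\mathcal{O})$ of weight $k$ is the same as an $\mathcal{A}_5$-invariant element of degree $k$ inside $\mathbb{C}[\zeta_0,\zeta_1,\zeta_2]$. The induced $\mathcal{A}_5$-action on $\mathbb{P}^2(\mathbb{C})$ is one of the three-dimensional irreducible representations of the icosahedral group, for which Klein's classical invariant-theoretic computation produces four generators $\mathfrak{A},\mathfrak{B},\mathfrak{C},\mathfrak{D}$ of degrees $2,6,10,15$ subject to a single relation $R=0$ of degree $30$. I would identify the polynomials displayed in (\ref{Klein}) with these invariants and verify $R=0$ by direct expansion and comparison of leading coefficients.

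For part (3), the index-two inclusion $PSL(2,\mathcal{O}) \subset \langle PSL(2,\mathcal{O}),\tau\rangle$ makes the ring of all Hilbert modular forms a quadratic extension of the symmetric ring. I would construct $\mathfrak{c}$ as an explicit alternating polynomial of degree $5$ in $\zeta_0,\zeta_1,\zeta_2$, for example as the product of linear forms vanishing along the components of the $\tau$-fixed locus on $\mathbb{P}^2$, and verify $\mathfrak{c}^2=\mathfrak{C}$ by matching degree, zero locus, and leading coefficient. Substituting $\mathfrak{C}=\mathfrak{c}^2$ into $R$ then yields the promised presentation $\mathbb{C}[\mathfrak{A},\mathfrak{B},\mathfrak{c},\mathfrak{D}]/(R(\mathfrak{A},\mathfrak{B},\mathfrak{c}^2,\mathfrak{D})=0)$.
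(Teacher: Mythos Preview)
The paper does not supply its own proof of this proposition: it is quoted verbatim as a result of Hirzebruch, with the citation \cite{Hirzebruch} pp.~307--310 serving in lieu of an argument. Your outline is therefore not to be compared against anything in the present paper, but it is in fact a faithful sketch of Hirzebruch's original method---cusp resolution by Hirzebruch--Jung strings governed by the continued fraction of $(1+\sqrt{5})/2$, rationality via the canonical divisor and proportionality, contraction to $\mathbb{P}^2$, and then Klein's $\mathcal{A}_5$-invariant theory on $\mathbb{C}[\zeta_0,\zeta_1,\zeta_2]$ for parts (2) and (3).

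One small correction to your sketch of part (3): the more robust way to produce $\mathfrak{c}$ is not to guess a product of linear forms, but to observe (as in Proposition~\ref{noteH}(3) just below) that the double cover $\overline{(\mathbb{H}\times\mathbb{H})/PSL(2,\mathcal{O})}\to\mathbb{P}(1,3,5)$ is branched exactly along $\{\mathfrak{C}=0\}$; this forces $\mathfrak{C}$ to be a square in the full ring of modular forms, and the square root is automatically $\tau$-alternating of weight $5$. Your ``product of linear forms along the $\tau$-fixed locus'' heuristic points at the right divisor but would need the extra geometric input that $\{\mathfrak{C}=0\}$ decomposes suitably on $\mathbb{P}^2$, which is not immediate.
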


Let $c'\in \mathbb{C}-\{0\}$.
We consider the action $(\zeta_0,\zeta_1,\zeta_2)\mapsto (c'\zeta_0,c'\zeta_1, c'\zeta_2)$.
Because $\mathfrak{A}$ ($\mathfrak{B},\mathfrak{C}$,  resp.)
 is a homogeneous polynomial  of degree  $2$ ($6,10$, resp) in $\zeta_0,\zeta_1$ and $\zeta_2$,
 we have the action 
 $(\mathfrak{A},\mathfrak{B},\mathfrak{C})\mapsto (c'^2 \mathfrak{A},c'^6 \mathfrak{B}, c'^{10} \mathfrak{C})$.
 Therefore, we regard $(\mathfrak{A},\mathfrak{B},\mathfrak{C})$-space as the weighted projective space $\mathbb{P}(1,3,5)$.
Especially, the pair 
\begin{eqnarray}\label{weight}
\displaystyle (X,Y)=\Big( \frac{\mathfrak{B}}{\mathfrak{A}^3},\frac{\mathfrak{C}}{\mathfrak{A}^5} \Big)
\end{eqnarray}
gives a system of  affine coordinates  on 
 $\{ \mathfrak{A} \not=0 \} $.
 
By the arguments of Klein \cite{Klein}, Hirzebruch \cite{Hirzebruch} and  Kobayashi, Kushibiki and Naruki {\rm \cite{KobaNaru}}, we know the following properties of the action of $\mathcal{A}_5$ on $\overline{(\mathbb{H}\times\mathbb{H})/\langle \Gamma(\sqrt{5}),\tau \rangle} 
 =\mathbb{P}^2(\mathbb{C})=\{\zeta_0:\zeta_1:\zeta_2\}$.

\begin{prop}\label{noteH} 

{\rm (1)}
The correspondence 
$(\zeta_0:\zeta_1:\zeta_2)\mapsto \big(\mathfrak{A}(\zeta_0:\zeta_1:\zeta_2):\mathfrak{B}(\zeta_0:\zeta_1:\zeta_2):\mathfrak{C}(\zeta_0:\zeta_1:\zeta_2)\big)$
gives an identification between $\overline{\mathbb{P}^2(\mathbb{C})/\mathcal{A}_5}$ and 
$\mathbb{P}(1,3,5)$.
Then, the Hilbert modular orbifold $\overline{(\mathbb{H}\times\mathbb{H})/\langle PSL(2,\mathcal{O}),\tau \rangle}$
is identified with $\mathbb{P}(1,3,5)$.
The cusp $\overline{(\sqrt{-1}\infty,\sqrt{-1}\infty)}\in \overline{(\mathbb{H}\times\mathbb{H})/\langle PSL(2,\mathcal{O}),\tau \rangle}$ is given by the point $(\mathfrak{A}:\mathfrak{B}:\mathfrak{C})=(1:0:0)$.
So, the quotient space 
$(\mathbb{H}\times\mathbb{H})/\langle PSL(2,\mathcal{O}),\tau\rangle$
corresponds to $\mathbb{P}(1,3,5)-\{(1:0:0)\}$.

{\rm(2)} 
The divisor $\{\mathfrak{D}=0\}$ consists of fifteen  lines in $\mathbb{P}^2(\mathbb{C})$.
These fifteen lines of $\{\mathfrak{D}=0\}$ are the reflection lines of  fifteen involutions of $\mathcal{A}_5$
{\rm (}note that $\mathcal{A}_5$ is generated by three involutions{\rm )}.

{\rm (3)} The involution $\tau$ induces an involution on the orbifold  $\overline{(\mathbb{H}\times\mathbb{H})/ PSL(2,\mathcal{O})}$.
The branch locus of  the canonical projection $\overline{(\mathbb{H}\times\mathbb{H})/ PSL(2,\mathcal{O}) }
\rightarrow
\mathbb{P}(1,3,5)$
is given by $\{\mathfrak{C}=0\}$.

\end{prop}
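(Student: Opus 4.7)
The plan is to deduce Proposition~\ref{noteH} by combining classical Klein invariant theory for the icosahedral action of $\mathcal{A}_5$ on $\mathbb{P}^2(\mathbb{C})$ with Proposition~\ref{KleinH}. For part (1), the starting point is Klein's theorem that $\mathfrak{A},\mathfrak{B},\mathfrak{C},\mathfrak{D}$ generate the ring of $\mathcal{A}_5$-invariants on $\mathbb{C}^3$ for the $3$-dimensional icosahedral representation, with the single relation $R(\mathfrak{A},\mathfrak{B},\mathfrak{C},\mathfrak{D})=0$ expressing $\mathfrak{D}^2$ as a polynomial in $\mathfrak{A},\mathfrak{B},\mathfrak{C}$. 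Since $\mathfrak{A},\mathfrak{B},\mathfrak{C}$ of degrees $2,6,10$ are algebraically independent, the subring $\mathbb{C}[\mathfrak{A},\mathfrak{B},\mathfrak{C}]$ is a polynomial ring, and $\mathfrak{D}$ becomes redundant after passing to the quotient variety. Hence $\overline{\mathbb{P}^2(\mathbb{C})/\mathcal{A}_5}={\rm Proj}\,\mathbb{C}[\mathfrak{A},\mathfrak{B},\mathfrak{C}]$, and dividing the weights by the common factor $2$ yields $\mathbb{P}(1,3,5)$. Combined with Proposition~\ref{KleinH}(1) and the isomorphism $PSL(2,\mathcal{O})/\Gamma(\sqrt{5})\simeq\mathcal{A}_5$, this identifies $\overline{(\mathbb{H}\times\mathbb{H})/\langle PSL(2,\mathcal{O}),\tau\rangle}$ with $\mathbb{P}(1,3,5)$. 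To locate the unique cusp I would inspect Fourier expansions at $(\sqrt{-1}\infty,\sqrt{-1}\infty)$: the weight-$2$ form $\mathfrak{A}$ has nonzero constant term there while $\mathfrak{B}$ and $\mathfrak{C}$ vanish, forcing the cusp to map to $(1:0:0)$.

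For part (2), $\mathcal{A}_5$ has exactly $15$ elements of order $2$ (products of two disjoint transpositions in $S_5$), forming a single conjugacy class, and each fixes a unique projective line in $\mathbb{P}^2(\mathbb{C})$ (the projectivization of its $2$-dimensional eigenspace in the icosahedral representation). The product of the $15$ linear forms cutting out these reflection lines is therefore an $\mathcal{A}_5$-invariant of degree $15$. Since $2\alpha+6\beta+10\gamma=15$ admits no non-negative integer solutions, no degree-$15$ invariant lies in $\mathbb{C}[\mathfrak{A},\mathfrak{B},\mathfrak{C}]$; by Klein's list of generators this product must therefore be a scalar multiple of $\mathfrak{D}$, giving the desired factorization of $\{\mathfrak{D}=0\}$.

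For part (3), $\tau$ commutes with the $PSL(2,\mathcal{O})$-action on $\mathbb{H}\times\mathbb{H}$, so it descends to an involution of $\overline{(\mathbb{H}\times\mathbb{H})/PSL(2,\mathcal{O})}$ whose further quotient is $\mathbb{P}(1,3,5)$ by part (1); hence the canonical projection is a double cover whose branch locus is the image of the $\tau$-fixed diagonal $\varDelta=\{z_1=z_2\}$. By Proposition~\ref{KleinH}(3) there exists an alternating modular form $\mathfrak{c}$ with $\mathfrak{c}^2=\mathfrak{C}$; any alternating form vanishes on $\varDelta$, so $\mathfrak{C}=\mathfrak{c}^2$ vanishes to even order along the image of $\varDelta$. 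A dimension count (both loci are irreducible divisors in $\mathbb{P}(1,3,5)$) then identifies this image with $\{\mathfrak{C}=0\}$.

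The main obstacle is the cusp identification in part (1). Although the invariant-theoretic quotient $\overline{\mathbb{P}^2(\mathbb{C})/\mathcal{A}_5}\simeq\mathbb{P}(1,3,5)$ is classical, matching the cusp with $(1:0:0)$ requires either explicit Fourier--Jacobi expansions of $\mathfrak{A},\mathfrak{B},\mathfrak{C}$ (equivalently of the M\"uller generators $g_2,s_6,s_{10}$) or a careful tracking of the six exceptional points added in Proposition~\ref{KleinH}(1) through the $\mathcal{A}_5$-quotient.
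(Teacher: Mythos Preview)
The paper does not give its own proof of this proposition: it is presented as a summary of results of Klein, Hirzebruch, and Kobayashi--Kushibiki--Naruki, and no proof follows the statement. So there is no in-paper argument to compare against; your sketch is essentially supplying what the cited references contain, and its overall strategy is the right one. Two points deserve tightening, however.

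In part~(1), your phrase ``$\mathfrak{D}$ becomes redundant after passing to the quotient variety'' is correct but the mechanism should be made explicit: $\mathfrak{D}$ has odd degree~$15$ while $\mathfrak{A},\mathfrak{B},\mathfrak{C}$ have even degrees $2,6,10$, so every degree-zero element of the graded fraction field involves $\mathfrak{D}$ only through even powers, and $\mathfrak{D}^2\in\mathbb{C}[\mathfrak{A},\mathfrak{B},\mathfrak{C}]$ by the Klein relation~(\ref{Klein}). You also need that $\mathfrak{A},\mathfrak{B},\mathfrak{C}$ have no common zero in $\mathbb{P}^2(\mathbb{C})$, so that $(\mathfrak{A}:\mathfrak{B}:\mathfrak{C})$ is an everywhere-defined finite morphism; then finite $+$ birational onto the normal target $\mathbb{P}(1,3,5)$ gives the isomorphism.

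In part~(3) there are two slips. First, $\tau$ does not commute with $PSL(2,\mathcal{O})$; it \emph{normalizes} it, since $\tau g\tau^{-1}=g'$ is the Galois conjugate. Normalizing is what is actually needed for $\tau$ to descend. Second, and more substantively, the branch locus of the quotient by the induced involution $\bar\tau$ is the fixed locus of $\bar\tau$ on $(\mathbb{H}\times\mathbb{H})/PSL(2,\mathcal{O})$, namely the set of orbits $[(z_1,z_2)]$ with $(z_2,z_1)=g\cdot(z_1,z_2)$ for some $g\in PSL(2,\mathcal{O})$. This is \emph{a priori} larger than the image of the diagonal $\varDelta$. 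Showing that these coincide (equivalently, that the only modular curve fixed by $\bar\tau$ is the diagonal $F_1$) is exactly the content drawn from Hirzebruch and Kobayashi--Kushibiki--Naruki; your dimension/irreducibility argument with $\mathfrak{c}^2=\mathfrak{C}$ establishes $\Pi(\varDelta)=\{\mathfrak{C}=0\}$, but not yet that the full branch locus is no larger.
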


Set
\begin{eqnarray}\label{X}
\mathfrak{X}=\{(X,Y)\in\mathbb{C}^2| Y (1728 X^5 -720 X^3 Y + 80 X Y^2 -64(5X^2-Y)^2-Y^3)\not=0\}.
\end{eqnarray}
In Section 6 of \cite{Nagano},
we obtained the birational mapping $\Lambda\rightarrow \mathfrak{X}$ given by
\begin{align}\label{(L,M)to(X,Y)}
(\lambda,\mu)\mapsto (X,Y) =\Big(\frac{25 \mu}{2 (\lambda-1/4)^3},\frac{-3125\mu^2}{(\lambda-1/4)^5} \Big).
\end{align}

\begin{thm}\label{UDEThm}
{\rm (\cite{Nagano} Theorem 6.3)}
By the correspondence {\rm (\ref{(L,M)to(X,Y)})}, the period differential equation for the family $\mathcal{F}_0=\{ S_0(\lambda,\mu)\}$ is transformed to the system of differential equations
\begin{eqnarray}
\begin{cases}\label{periodUDE}
& u_{XX}= L _1u_{XY} + A_1 u_X + B_1 u_Y +P_1 u,\\
& u_{YY}=M _1u_{XY} + C _1u_X +D_1 u_Y+Q_1 u
\end{cases}
\end{eqnarray}
with
\begin{align*}
\begin{cases}
\vspace*{0.2cm}
L_1=\displaystyle \frac{-20 (4 X^2+3 XY -4Y)}{36 X^2 -32 X -Y},\quad
M_1=\displaystyle \frac{-2(54 X^3 -50 X^2 -3 XY +2Y )}{5 Y (36 X^2 -32 X -Y)},\\
\vspace*{0.2cm}
A_1=\displaystyle \frac{-2(20 X^3 -8 XY   + 9X^2 Y + Y^2)}{XY (36X^2 -32 X - Y)},\quad
B_1=\displaystyle \frac{10Y(-8 +3X)}{X(36X^2 -32 X -Y)},\\
\vspace*{0.2cm}
C_1=\displaystyle \frac{-2(-25X^2 + 27 X^3 + 2Y -3 XY )}{5Y^2(36X^2 -32 X -Y)},\quad
D_1=\displaystyle \frac{-2(-120 X^2 +135 X^3 -2Y - 3XY)}{5XY( 36X^2 -32X -Y)}\\
\vspace*{0.2cm}
P_1=\displaystyle \frac{-2(8X-Y)}{X^2 (36X^2 -32 X -Y)},\quad
Q_1=\displaystyle \frac{-2(-10+9X)}{25XY (36X^2 - 32 X -Y)}.
\end{cases}
\end{align*}
\end{thm}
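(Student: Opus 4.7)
My plan is to treat this as a pure change-of-variables verification, transporting the period differential equation for $\mathcal{F}_0$ in the coordinates $(\lambda,\mu)$ (which was determined in \cite{Nagano} Theorem 4.1 and is referenced just above as the ``period differential equation for $\mathcal{F}_0$'') to the coordinates $(X,Y)$ via the birational map (\ref{(L,M)to(X,Y)}), and then matching the resulting coefficients against $L_1,M_1,A_1,B_1,C_1,D_1,P_1,Q_1$.

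First, I would invert (\ref{(L,M)to(X,Y)}) explicitly. A short calculation gives
\begin{eqnarray*}
\lambda-\tfrac14=-\frac{Y}{20X^{2}}, \qquad \mu=-\frac{Y^{3}}{10^{5}X^{5}},
\end{eqnarray*}
so the map $(\lambda,\mu)\mapsto(X,Y)$ is a birational isomorphism $\Lambda\to\mathfrak{X}$ and both directions are rational. Under this map the discriminant factor $\lambda^2(4\lambda-1)^3-2(2+25\lambda(20\lambda-1))\mu-3125\mu^2$ that cuts out the singular locus of the period differential equation for $\mathcal{F}_0$ is sent, up to an invertible rational multiple, to a power of $Y(36X^2-32X-Y)$ times the polynomial appearing in (\ref{X}); this is what explains the common denominator $36X^2-32X-Y$ and the factor $Y$ in the stated coefficients, and it is a useful sanity check.

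Next, I would compute the Jacobian $\partial(X,Y)/\partial(\lambda,\mu)$ from (\ref{(L,M)to(X,Y)}) and invert it to obtain $\lambda_X,\lambda_Y,\mu_X,\mu_Y$. Applying the chain rule
\begin{eqnarray*}
\partial_{X}=\lambda_{X}\partial_{\lambda}+\mu_{X}\partial_{\mu}, \qquad \partial_{Y}=\lambda_{Y}\partial_{\lambda}+\mu_{Y}\partial_{\mu},
\end{eqnarray*}
and iterating, one expresses $u_{XX},u_{XY},u_{YY}$ as linear combinations of $u_{\lambda\lambda},u_{\lambda\mu},u_{\mu\mu},u_{\lambda},u_{\mu}$ with coefficients rational in $(X,Y)$. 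Substituting the two equations of the period differential equation for $\mathcal{F}_0$ (which express $u_{\lambda\lambda}$ and $u_{\mu\mu}$ as combinations of $u_{\lambda\mu},u_{\lambda},u_{\mu},u$) eliminates $u_{\lambda\lambda},u_{\mu\mu}$ from the transformed system, leaving a $2\times 2$ linear system in $(u_{XX},u_{YY})$ over the ring of rational functions in $(X,Y)$ whose remaining unknowns are $u_{XY},u_{X},u_{Y},u$. Solving this linear system by Cramer's rule puts the transformed system into the solved form (\ref{periodUDE}) and produces explicit rational expressions for each coefficient, which I would then match term by term against $L_1,\ldots,Q_1$.

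The main obstacle is purely computational bulk: expanding $u_{XX}$ and $u_{YY}$ in the old derivatives and simplifying the resulting rational functions in $(X,Y)$ is large, and the final Cramer step requires inverting a $2\times 2$ matrix whose entries are themselves rational in $(X,Y)$. There is no conceptual difficulty once one knows in advance that the answer must be a rank-$4$ Frobenius-integrable system on $\mathfrak{X}$ (because $(\lambda,\mu)\mapsto(X,Y)$ is a birational isomorphism of the parameter spaces of the same family of $K3$ surfaces, so the periods of $S_0(\lambda,\mu)$ are, as multivalued functions, identified with the periods of $S(X,Y)$, and therefore automatically satisfy some rank-$4$ system; the content of the theorem is that this system has the displayed coefficients). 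I would therefore organise the calculation to exploit the shared denominator $36X^2-32X-Y$ from the outset: clear denominators once, compare numerators as polynomials in $X,Y$, and verify each of the eight identities $L_1,M_1,A_1,B_1,C_1,D_1,P_1,Q_1$ separately, which reduces the check to finitely many polynomial equalities.
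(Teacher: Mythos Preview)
The paper itself does not give a proof of this theorem: it is stated in the preliminaries as a citation of \cite{Nagano} Theorem 6.3, with no argument reproduced here. So there is no ``paper's own proof'' in the present text to compare against.

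That said, your approach is exactly the natural one and is almost certainly what is done in \cite{Nagano}: the map (\ref{(L,M)to(X,Y)}) is birational (your inversion $\lambda-\tfrac14=-Y/(20X^2)$, $\mu=-Y^3/(10^5X^5)$ is correct), so the rank-$4$ Frobenius system in $(\lambda,\mu)$ transports to a rank-$4$ system in $(X,Y)$ by the chain rule, and the only content is the algebraic verification of the eight rational coefficients. Your plan to clear the common denominator $36X^2-32X-Y$ and compare polynomial numerators is the right bookkeeping for this. There is nothing missing conceptually; the theorem is a (lengthy but routine) symbolic computation.
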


\begin{rem}\label{UDE}
In {\rm \cite{Nagano}},
we saw that {\rm (\ref{periodUDE})} is an uniformizing differential equation of the Hilbert modular orbifod $\overline{(\mathbb{H}\times\mathbb{H})/ \langle PSL(2,\mathcal{O}) ,\tau\rangle }$.
In other words, 
the solutions of {\rm (\ref{periodUDE})} define the  developing map of
the canonical projection
 $\mathbb{H}\times \mathbb{H} \rightarrow (\mathbb{H}\times \mathbb{H})/\langle PSL(2,\mathcal{O}),\tau\rangle$.
This gives an alternative proof of  {\rm Theorem \ref{NaganoThm} (2)}.
\end{rem}

\section{The period of the family $\mathcal{F}$}

\subsection{The family $\mathcal{F}$ of $K3 $ surfaces}

We obtain a new  family $\mathcal{F}$ of $K3$ surfaces with explicit defining equations 
from the family $\mathcal{F}_0=\{S_0(\lambda,\mu)\}$. 

\begin{prop}\label{PropS(X,Y)}
The family of $K3$ surfaces $\mathcal{F}_0=\{S_0(\lambda,\mu)\}$ for $(\lambda,\mu)\in \Lambda$ is transformed to the family $\mathcal{F}=\{S(X,Y)\}$ for $(X,Y)\in \mathfrak{X}$:
\begin{eqnarray}\label{S(X,Y)}
S(X,Y):     z^2=x^3 - 4 y^2(4y -5) x^2 + 20 Xy^3 x+Y y^4.
\end{eqnarray}
\end{prop}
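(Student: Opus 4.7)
The plan is to exhibit an explicit birational transformation from $S_0(\lambda,\mu)$ to $S(X,Y)$ that realizes, at the level of defining equations, the parameter correspondence (\ref{(L,M)to(X,Y)}) already established in Section 6 of \cite{Nagano}. Both surfaces are $K3$ surfaces whose transcendental lattice is $A$, and Theorem \ref{UDEThm} shows that the period differential equations for $\mathcal{F}_0$ and $\mathcal{F}$ are related by (\ref{(L,M)to(X,Y)}), so a birational equivalence is to be expected and the proof reduces to producing the substitution explicitly.

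First I would view the defining equation of $S_0(\lambda,\mu)$ as a cubic in $z_0$ over $\mathbb{C}(x_0,y_0)$, which exhibits a natural elliptic fibration. A preliminary shift $\lambda = 1/4 + \kappa$ isolates the factor $(\lambda - 1/4)$ that appears in the denominators of $X$ and $Y$, suggesting that $\kappa$ should be absorbed into the new fiber coordinates. Applying a Tschirnhaus-type substitution in $z_0$ together with a rational change of base coordinate brings the fibration into a Weierstrass form $z^2 = x^3 + a(y)x^2 + b(y)x + c(y)$, and a subsequent scaling $(x, y, z) = (u^2 \tilde x, u \tilde y, u^3 \tilde z)$ with $u$ a suitable rational function of $\lambda$ and $\mu$ should normalize $a(y)$ to $-4y^2(4y-5)$. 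Reading off the remaining coefficients in the form $b(y) = 20Xy^3$ and $c(y) = Yy^4$ should then reproduce exactly the values of $X$ and $Y$ given in (\ref{(L,M)to(X,Y)}), yielding the defining equation (\ref{S(X,Y)}).

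The main obstacle is pinning down this normalization: the target coefficient $-4y^2(4y-5)$ is rigid — its double root at $y=0$ together with the simple root at $y = 5/4$ reflects a specific configuration of singular fibers of the elliptic fibration on $S(X,Y)$ — so the choice of base coordinate $y$ on $S_0(\lambda,\mu)$ must be made to match this feature. Once the base coordinate is fixed so that the discriminant locus has the correct shape, the scalings of $x$ and $z$ are determined by the requirement that $a(y)$ be the prescribed polynomial, and extracting $(X,Y)$ from $b(y), c(y)$ becomes a direct algebraic computation. Finally, one verifies that the resulting rational map is biregular over a Zariski open subset whose image lies in $\mathfrak{X}$, confirming that $\mathcal{F}_0$ and $\mathcal{F}$ are birationally equivalent families of $K3$ surfaces.
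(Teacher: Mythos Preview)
Your strategy is the same as the paper's in spirit: both proofs amount to exhibiting an explicit birational change of coordinates carrying $S_0(\lambda,\mu)$ to the Weierstrass form (\ref{S(X,Y)}) and checking that the resulting coefficients reproduce (\ref{(L,M)to(X,Y)}). The paper's proof is purely computational: it simply writes down the substitution
\[
x_0 = \frac{Yy}{10Xx_1},\qquad
y_0 = \frac{4Y^2 x_1 y_1^2}{-50X^2Yx_1y_1 - 5XY^2 y_1^2 + 5XYz_1},\qquad
z_0 = -\frac{10XYx_1y_1 + Y^2 y_1^2 - Yz_1}{20XYx_1y_1},
\]
lands on an intermediate surface $z_1^2 = Y\bigl(x_1^3 - 4y_1^2(4y_1-5)x_1^2 + 20Xy_1^3 x_1 + Yy_1^4\bigr)$, and then rescales $z_1 \mapsto \sqrt{Y}\,z$ to reach (\ref{S(X,Y)}). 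No derivation of these formulas is offered, so your outline of Weierstrass normalization is in fact a more transparent route to the same destination.

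One point in your plan needs correction, however. Reading the defining equation of $S_0(\lambda,\mu)$ as a cubic in $z_0$ over $\mathbb{C}(x_0,y_0)$ does \emph{not} exhibit an elliptic fibration: the base $\mathbb{C}(x_0,y_0)$ has transcendence degree two, so what you get is a degree-$3$ finite cover of the $(x_0,y_0)$-plane, with $0$-dimensional fibers. To run your Tschirnhaus-and-scale argument you must first single out a genuine elliptic pencil on $S_0(\lambda,\mu)$, i.e.\ a rational map to $\mathbb{P}^1$ whose generic fiber has genus one, and then bring that fiber to Weierstrass form. Once the correct base coordinate is in hand, the rest of your plan (normalizing the $x^2$-coefficient to $-4y^2(4y-5)$ and reading off $X,Y$) goes through and recovers the paper's formulas; but as written, the first step would not produce a fibration to normalize.
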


\begin{proof}
By the  transformation  (\ref{(L,M)to(X,Y)}) and the birational transformation given by
\begin{align*}
\begin{cases}
\vspace{2mm}
&x_0 =\displaystyle\frac{Y y}{10 X x_1},\\
\vspace{2mm}
&y_0=\displaystyle\frac{4  Y^2 x_1 y_1^2}{-50 X^2 Y x_1 y_1 -5 X Y^2 y_1^2 +5 X Y z_1} ,\\
&z_0=\displaystyle - \frac{10 X Y x_1 y_1 + Y^2 y_1^2 - Y z_1}{20 X Y x_1 y_1},
\end{cases}
\end{align*}
the family $\mathcal{F}_0=\{S_0(\lambda,\mu)\}$ is transformed to the family $\mathcal{F}_1=\{S_1(X,Y)\}$ given by
$$
S_1(X,Y): z_1^2 =Y (x_1^3 - 4 y_1^2(4y_1 -5) x_1^2 + 20 Xy_1^3 x_1+Y y_1^4)
$$
over $\mathfrak{X}$.
Then, by the correspondence $(x_1,y_1,z_1) \mapsto (x,y,z)=\Big(x_1,y_1, \displaystyle\frac{1}{\sqrt{Y}} z_1\Big)$,
we have the family $\mathcal{F}=\{S(X,Y)\}$ given by (\ref{S(X,Y)}).
\end{proof}

 From (\ref{Period0}),
 we obtain the multivalued analytic period mapping
 \begin{align}\label{periodmap}
 \Phi_1:\mathfrak{X}\rightarrow \mathcal{D}_+; (X,Y)\mapsto\Big( \int_{\Gamma_1}\omega: \int_{\Gamma_2}\omega:  \int_{\Gamma_3}\omega:  \int_{\Gamma_4}\omega\Big),
 \end{align}
 where $\omega=\displaystyle \frac{dx\wedge dy}{z}$ is the unique holomorphic $2$-form on $S(X,Y)$ up to a constant factor and
 $\Gamma_1,\cdots ,\Gamma_4$ are certain $2$-cycles on $S(X,Y)$
 (this period mapping is stated in detail at the beginning of Section 2.2).
 
\begin{rem}
The correspondence
$(x_1,y_1,z_1)\mapsto (x,y,z)=(x_1,y_1,\frac{1}{\sqrt{Y}}z_1)$ in the proof of Proposition \ref{PropS(X,Y)} induces the double covering 
$\mathfrak{X}' \rightarrow \mathfrak{X}$ given by
$(X,Y')\mapsto(X,Y)=(X,Y'^2)$.
However,  $(X,Y')$ and $(X,-Y')\in\mathfrak{X}'$ define mutually isomorphic $P$-marked $K3$ surfaces  (see {\rm Definition \ref{P-marking}}).
So, we obtain the above period mapping $\Phi_1$ on $\mathfrak{X}$.
\end{rem}

Hence, from Theorem \ref{NaganoThm}, 
for a generic point $(X,Y)\in \mathfrak{X}$, the intersection matrix of the N\'eron-Severi lattice ${\rm NS}(S(X,Y))$ is given by (\ref{NSE8E8}) 
and that of the transcendental lattice  ${\rm Tr}(S(X,Y))$ is given by $A$ in (\ref{TrA}).
The projective monodromy group of $\Phi_1$ is isomorphic to $PO^+(A,\mathbb{Z})$.
From Theorem \ref{UDEThm}, 
the period differential equation for the family $\mathcal{F}=\{S(X,Y)\}$ is given by {\rm (\ref{periodUDE})}.

\begin{prop}
Under the correspondence {\rm (\ref{weight})}, the surface $S(X,Y)$ is birationally equivalent  to
\begin{eqnarray}\label{S(a_0;a_1;a_2)}
S(\mathfrak{A}:\mathfrak{B}:\mathfrak{C}):z^2=x^3-4(4y^3 -5\mathfrak{A}y^2)x^2+20\mathfrak{B} y^3 x +\mathfrak{C}y^4.
\end{eqnarray}
\end{prop}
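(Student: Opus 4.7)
The plan is to exhibit an explicit change of variables scaling $x, y, z$ by appropriate powers of $\mathfrak{A}$ that converts the equation of $S(\mathfrak{A}:\mathfrak{B}:\mathfrak{C})$ into the equation of $S(X,Y)$, where $X = \mathfrak{B}/\mathfrak{A}^3$ and $Y = \mathfrak{C}/\mathfrak{A}^5$ as in (\ref{weight}). The idea is that since $(\mathfrak{A}, \mathfrak{B}, \mathfrak{C})$ parameterizes the weighted projective space $\mathbb{P}(1,3,5)$ (equivalently, $\mathbb{P}(2,6,10)$), the defining equation of $S(\mathfrak{A}:\mathfrak{B}:\mathfrak{C})$ must itself be quasi-homogeneous, and this will fix the scaling uniquely.

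First I would determine the weights on $x, y, z$ by demanding that the equation of $S(\mathfrak{A}:\mathfrak{B}:\mathfrak{C})$ be invariant under the $\mathbb{C}^{\ast}$-action $(\mathfrak{A},\mathfrak{B},\mathfrak{C}) \mapsto (c^2\mathfrak{A}, c^6\mathfrak{B}, c^{10}\mathfrak{C})$. Matching degrees of the monomials $x^3$, $y^3 x^2$, $\mathfrak{A} y^2 x^2$, $\mathfrak{B} y^3 x$, $\mathfrak{C} y^4$ and $z^2$ forces $x \mapsto c^6 x$, $y \mapsto c^2 y$, $z \mapsto c^9 z$, all of weight $18$.

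Next, I would specialize $c$ so that $c^2 = \mathfrak{A}$. Applying the substitution $x = \mathfrak{A}^3 x'$, $y = \mathfrak{A} y'$, $z = \mathfrak{A}^{9/2} z'$ (with a fixed choice of square root; the ambiguity only affects $S(X,Y)$ by $z \mapsto -z$, which is an isomorphism) to the equation (\ref{S(a_0;a_1;a_2)}) and dividing both sides by $\mathfrak{A}^9$ would yield
\begin{equation*}
z'^{\,2} = x'^{\,3} - 4 y'^{\,2}(4 y' - 5) x'^{\,2} + 20 \frac{\mathfrak{B}}{\mathfrak{A}^3} y'^{\,3} x' + \frac{\mathfrak{C}}{\mathfrak{A}^5} y'^{\,4},
\end{equation*}
which is precisely the equation of $S(X,Y)$ in (\ref{S(X,Y)}). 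This computation gives a birational (indeed biregular, away from $\mathfrak{A}=0$) equivalence between the two surfaces.

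There is no real obstacle here beyond bookkeeping: the entire content is checking that the six monomials in the defining polynomial carry compatible weights, which is a small system of linear equations, and then verifying that substituting $X = \mathfrak{B}/\mathfrak{A}^3$, $Y = \mathfrak{C}/\mathfrak{A}^5$ collapses everything correctly after dividing by $\mathfrak{A}^9$. The mild subtlety of the half-integer exponent in $z = \mathfrak{A}^{9/2} z'$ is harmless since we work pointwise over $\mathfrak{X}$ with $\mathfrak{A} \neq 0$ fixed, and the sign ambiguity corresponds to the hyperelliptic involution on the $K3$ surface, which is an automorphism.
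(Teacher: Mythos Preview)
Your proof is correct and is essentially the same as the paper's: both exhibit the explicit scaling $x\leftrightarrow \mathfrak{A}^3 x'$, $y\leftrightarrow \mathfrak{A} y'$, $z\leftrightarrow \mathfrak{A}^{9/2} z'$ (the paper runs it in the opposite direction, starting from $S(X,Y)$ and arriving at $S(\mathfrak{A}:\mathfrak{B}:\mathfrak{C})$). Your preliminary weight analysis is a nice piece of motivation that the paper omits, but the substantive computation is identical.
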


\begin{proof}
Putting $\displaystyle X=\frac{\mathfrak{B}}{\mathfrak{A}^3},Y=\frac{\mathfrak{C}}{\mathfrak{A}^5}$ to
(\ref{S(X,Y)}), we have
$$
 \mathfrak{A}^{5} z^2 = \mathfrak{A}^{5} x^3 + (20y^2 -16 y^3)\mathfrak{A}^{5}x^2+20 \mathfrak{A}^2 \mathfrak{B} y^3 x +\mathfrak{C}y^4.
$$
Then, 
by the correspondence
$$
x\mapsto \frac{x}{\mathfrak{A}^3},\quad\quad y\mapsto\frac{y}{\mathfrak{A}},\quad\quad z\mapsto\frac{z}{\sqrt{\mathfrak{A}^9}},
$$
we obtain (\ref{S(a_0;a_1;a_2)}).
\end{proof}

\begin{rem}
For two surfaces
\begin{align*}
\begin{cases}
&S(\mathfrak{A}:\mathfrak{B}:\mathfrak{C}):z^2=x^3-4(4y^3 -5\mathfrak{A}y^2)x^2+20\mathfrak{B} y^3 x +\mathfrak{C}y^4,\\
&S(k^2 \mathfrak{A}:k^6\mathfrak{B}:k^{10}\mathfrak{C}):z^2=x^3-4(4y^3 -5k^2\mathfrak{A}y^2)x^2+20k^6\mathfrak{B} y^3 x +k^{10}\mathfrak{C}y^4,
\end{cases}
\end{align*}
we have an isomorphism 
$
S(\mathfrak{A}:\mathfrak{B}:\mathfrak{C}) \rightarrow S(k^2 \mathfrak{A}:k^6\mathfrak{B}:k^{10}\mathfrak{C})$
given by 
$
(x,y,z)  \mapsto  (k^6 x, k^2 y , k^{9} z)
$
as elliptic surfaces.
Therefore, $(\mathfrak{A}:\mathfrak{B}:\mathfrak{C})\in\mathbb{P}(1:3:5)$ gives an isomorphism class of these elliptic $K3$ surfaces.
\end{rem}

We set $K_1=\{Y=0\}$ and $K_2=\{1728 X^5 -720 X^3 Y + 80 X Y^2 -64(5X^2-Y)^2-Y^3=0 \}$.

\begin{thm}\label{thm:compact}
The $(\mathfrak{A}:\mathfrak{B}:\mathfrak{C})$-space
$\mathbb{P}(1,3,5)$ 
 gives a compactification  of the parameter space $\mathfrak{X}$ of  the family $\mathcal{F}=\{S(X,Y)\}$ of $K3$ surfaces  given by {\rm (\ref{S(X,Y)})}.
 Namely, if $(1:0:0) \not=  (\mathfrak{A}:\mathfrak{B}:\mathfrak{C})\in \mathbb{P}(1,3,5)$, then the corresponding  surface $S(\mathfrak{A}:\mathfrak{B}:\mathfrak{C})$ is a $K3$ surface. On the other hand, $S(1:0:0)$ is a rational surface.  
\end{thm}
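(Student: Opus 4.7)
The plan is to split $\mathbb{P}(1,3,5)$ into three strata and argue separately on each, using the elliptic fibration structure $\pi:S(\mathfrak{A}:\mathfrak{B}:\mathfrak{C})\to\mathbb{P}^1_y$, $(x,y,z)\mapsto y$.

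\textbf{Generic case.} For $(\mathfrak{A}:\mathfrak{B}:\mathfrak{C})$ with $\mathfrak{A}\neq 0$ and $(X,Y)=(\mathfrak{B}/\mathfrak{A}^3,\mathfrak{C}/\mathfrak{A}^5)\in\mathfrak{X}$, nothing new needs to be proved: Proposition~\ref{PropS(X,Y)} already gives a birational equivalence between $S(X,Y)$ and $S_0(\lambda,\mu)\in\mathcal{F}_0$, and the latter is a $K3$ surface by~\cite{Nagano}.

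\textbf{The exceptional point $(1:0:0)$.} Setting $\mathfrak{A}=1,\ \mathfrak{B}=\mathfrak{C}=0$, equation~(\ref{S(a_0;a_1;a_2)}) collapses to $z^2=x^2(x-16y^3+20y^2)$. The substitution $w=z/x$ gives $w^2=x-16y^3+20y^2$, so the map $(w,y)\mapsto(w^2+16y^3-20y^2,\,y,\,w(w^2+16y^3-20y^2))$ exhibits $S(1:0:0)$ as birationally equivalent to the affine plane $\mathbb{A}^2_{w,y}$. Hence $S(1:0:0)$ is rational.

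\textbf{Remaining loci.} For $\mathfrak{A}=0$, or for $\mathfrak{A}\neq 0$ with $(X,Y)\in K_1\cup K_2\setminus\{(0,0)\}$, I would analyze the fibration via its Weierstrass model. Completing the cube by $x\mapsto x+\tfrac{4}{3}y^2(4y-5\mathfrak{A})$ brings (\ref{S(a_0;a_1;a_2)}) to $z^2=x^3+p(y)x+q(y)$ with $\deg_y p\le 6$ and $\deg_y q\le 9$; passing to the chart $t=1/y$ at $y=\infty$, the orders of vanishing of the pair $(p,q)$ are at least $(2,3)$, a situation compatible with an admissible Kodaira fiber on a minimal $K3$ elliptic surface. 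One then analyzes the discriminant $\Delta(y)=-16\bigl(4p(y)^3+27q(y)^2\bigr)$ stratum by stratum, verifying that on $K_1$, $K_2$, and $\{\mathfrak{A}=0\}$ the total-space singularities are at worst rational double points and that the Euler number of the minimal desingularization equals $24$.

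The main obstacle is the explicit fiber-type analysis in the third step: one must run Tate's algorithm on the Weierstrass model along each sub-stratum and rule out non-canonical singularities. A useful simplification comes from the weighted scaling action $(\mathfrak{A},\mathfrak{B},\mathfrak{C})\mapsto(k^2\mathfrak{A},k^6\mathfrak{B},k^{10}\mathfrak{C})$ recorded in the remark preceding the theorem: modulo this symmetry one may restrict to the two normal-form slices $\mathfrak{A}\in\{0,1\}$, reducing the check to a handful of one-parameter computations.
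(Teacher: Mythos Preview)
Your strategy coincides with the paper's: both argue via the Kodaira fiber types of the elliptic fibration $\pi:S\to\mathbb{P}^1_y$, and your treatment of the point $(1:0:0)$ is more explicit than the paper's one-line dismissal. Two remarks are in order.

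First, your shortcut on the generic stratum is slightly leaky. The birational map $(\lambda,\mu)\mapsto(X,Y)$ of (\ref{(L,M)to(X,Y)}) never hits $X=0$ (it would force $\mu=0\notin\Lambda$), so Proposition~\ref{PropS(X,Y)} alone does not cover the line $\{X=0\}\cap\mathfrak{X}$. The paper sidesteps this by treating all of $\{\mathfrak{A}\neq0\}$ uniformly through the Weierstrass model---and in fact it \emph{needs} the explicit generic configuration $IV^*+5I_1+I_5^*$ in Section~2.2 to set up the $S$-marking, so the direct fiber computation is not wasted effort.

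Second, what you call the ``main obstacle'' is resolved in the paper exactly as you anticipate: a finite list of discriminant-order checks. The paper records the outcomes---on $K_1\setminus\{(0,0)\}$ the fibers are $III^*+3I_1+I_6^*$; on $K_2\setminus\{(0,0)\}$ they are $IV^*+3I_1+I_2+I_5^*$; and on $\{\mathfrak{A}=0\}$ one rescales to the one-parameter form $S(l):z'^2=x'^3-16l\,y'^3x'^2+20y'^3x'+y'^4$ with $l=\mathfrak{C}^3/\mathfrak{B}^5$, again obtaining $IV^*+5I_1+I_5^*$. In each case the Euler numbers sum to $24$, confirming the $K3$ property.
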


\begin{proof}
First, we prove the case $\mathfrak{A}\not=0$. In this case, we consider $S(X,Y)$ in $(\ref{S(X,Y)})$.
We have the Kodaira normal form of (\ref{S(X,Y)}):
\begin{eqnarray}\label{Kodaira1}
z_1^2=x_1^3 -g_2(y) x -g_3(y)  \quad\quad(y\not=\infty),
\end{eqnarray}
with
\begin{align*}
\begin{cases}\vspace{2mm}
&\displaystyle  g_2 (y)= -\Big(20X y^3-\frac{16}{3} y^4 (4y-5)^2\Big)\\
&\displaystyle g_3(y)=-\Big(Y y^4+\frac{80}{3} y^5 (4y-5) X -\frac{128}{27} y^6 (4y-5)^3 \Big),
\end{cases}
\end{align*}
and 
\begin{eqnarray}\label{Kodaira2}
z_2^2=x_2^3 -h_2(y_1) x_2 -h_3(y_1)     \quad\quad (y\not=0),
\end{eqnarray}
with 
\begin{align*}
\begin{cases}\vspace{2mm}
&\displaystyle h_2(y_1) =-\Big(20 X y_1^5-\frac{256}{3} y_1^2+\frac{640}{3}y_1^3 -\frac{400}{3}y_1^4\Big),\\
&\displaystyle h_3(y_1)=-\Big(Y y_1^8 +\frac{320 }{3} X y_1^6 -\frac{400}{3} X y_1^7 -\frac{8192}{27} y_1^3 +\frac{10240}{9} y_1^4- \frac{12800}{9} y_1^5 +\frac{16000}{27} y_1^6  \Big),
\end{cases}
\end{align*}
where $\displaystyle y_1=\frac{1}{y}$.
The discriminant $D_0$ ($D_\infty$, resp.) of the right  hand side of (\ref{Kodaira1}) ((\ref{Kodaira2}), resp.) is given by
\begin{align*}
\begin{cases}
 &D_0=y^8(27 Y^2 +32000 X^3 y -7200 XY y -160000 X^2 y^2 +32000 Y y^2 \\
&\hspace{2cm}+5760 X Y y^2 +256000 X^2 y^3 -76800 Y y^3 - 102400 X^2 y^4 +61440 Y y^4 -16384 Y y^5),\\
&D_\infty = y_1^{11} (-16384 Y -102400 X^2 y_1 +61440 Y y_1 +256000 X^2 y_1^2 -76800 Y y_1^2\\
&\hspace{2cm} -160000 X^2 y_1^3 +32000 Y y_1^3 +5760 XY y_1^3 +32000 X^3 y_1^4 -7200 XY y_1^4 +27 Y^2 y_1^5).
\end{cases}
\end{align*}

If $(X,Y)\in \mathfrak{X}$, then we have 
$$
{\rm ord}_y(D_0) =8, \quad\quad {\rm ord}_y (g_2) =3 , \quad\quad {\rm ord}_y (g_3) =4,
$$
so $\pi^{-1}(0)$ is the singular fibre of type $IV^*$ (for detail, see \cite{Kodaira} or \cite{Shiga}). 
Similarly, we have
$$
{\rm ord}_y (D_\infty) =11, \quad\quad {\rm ord}_y (h_2) =2 , \quad\quad {\rm ord}_y (h_3) =3,
$$
so $\pi^{-1}(\infty)= I_5^*$. 
We have other 5 singular fibres of type  $I_1$.
Therefore, for  $(X,Y)\in \mathfrak{X}$, $S(X,Y)$ is an elliptic $K3$ surface whose singular fibres are of type  $IV^* + 5 I_1 +I_5^* $.

By the same way, we know  the structure of the elliptic surface $S(X,Y)$ for $(X,Y)\not\in\mathfrak{X}$.
If $X\not=0 $ and $Y=0$ (namely, $(X,Y)\in K_1-\{(0,0)\}$), then $S(X,0)$ is an elliptic $K3$ surface with the singular fibres  of type $III^* + 3I_1 + I_6^*$.
If $(X,Y)\in K_2-\{(0,0)\}$, $S(X,Y)$ is an elliptic K3 surface with the singular fibres of type $IV^* +3 I_ 1+ I_2 +I_5^*$.
However,  we see easily that $S(0,0)$  is not  a $K3$  surface, but a rational surface.

Next, we consider the case $\mathfrak{A}=0$.
In this case,  note that $(\mathfrak{B},\mathfrak{C})\not=(0,0)$.
We have the equation of $S(0:\mathfrak{B}:\mathfrak{C})$:
$
z^2 = x^3 -16y^3 x^2+20\mathfrak{B} y^3 x +\mathfrak{C} y^4.
$
On $\{\mathfrak{A}=0\}\subset \mathbb{P}(1,3,5)$, we use the parameter $\displaystyle l=\frac{\mathfrak{C}^3}{\mathfrak{B}^5}$.
By the correspondence $\displaystyle x= \frac{\mathfrak{C}^3}{\mathfrak{B}^4}x', y=\frac{\mathfrak{C}^2}{\mathfrak{B}^3} y'$ and $\displaystyle z=\frac{\sqrt{\mathfrak{C}^9}}{\mathfrak{B}^6}z'$, we have
$$
S(l): z'^2=x'^3 - 16 l y'^3 x'^2+20 y'^3 x'+y'^4.
$$
The discriminant of the right  hand side is given by
$
y'^8 (27 +32000 y' +5760 l y'^2 -102400l^2 y'^4 -16384 l^3 y'^5).
$
From this, we can see  that $S(l)$ is an elliptic $K3$ surface with the singular fibres  of type $IV^*+5I_1 +I_5^*$.
\end{proof}

Hence, we obtain the extended family $\{S(\mathfrak{A}:\mathfrak{B}:\mathfrak{C})| (\mathfrak{A}:\mathfrak{B}:\mathfrak{C})\in \mathbb{P}(1,3,5)-\{(1:0:0)\}\}$
of $K3$ surfaces.
For simplicity, 
let $\mathcal{F} $ denotes this extended family.

\subsection{The extension $\Phi$ of the period mapping $\Phi_1$}

Set $c_0=(1:0:0)\in\mathbb{P}(1,3,5)$.
In this subsection, we extend the  period mapping $\Phi_1:\mathfrak{X}\rightarrow \mathcal{D}_+$ in (\ref{periodmap}) to $\Phi:\mathbb{P}(1,3,5)-\{c_0\}\rightarrow \mathcal{D}_+$.

First, we recall the S-marking on $\mathfrak{X}$.
According to Theorem \ref{thm:compact} and its proof,
we have the elliptic $K3$ surface
$$
\pi_{(\mathfrak{A}:\mathfrak{B}:\mathfrak{C})}: S(\mathfrak{A}:\mathfrak{B}:\mathfrak{C})\rightarrow \mathbb{P}^1(\mathbb{C})=(y{\rm -sphere})
$$
for any $(\mathfrak{A}:\mathfrak{B}:\mathfrak{C})\in\mathbb{P}(1,3,5)-\{c_0\}$.

Take a generic point $(X_0,Y_0)\in\mathfrak{X}$. 
 The elliptic $K3$ surface $\check{S}=S(X_0,Y_0)$ given by (\ref{Kodaira1}) and (\ref{Kodaira2}) has the singular fibres of type $IV^*+5I_1+I_5^*$.
Let $F$  be a general fibre of this elliptic fibration and 
 $O$ be the zero of the Mordell-Weil group of sections.
 We have  two irreducible components of the divisor $C$ given by $\{x=0, z^2=Y y^4\}$. 
 We  take  the section  $R$   given by  $y\mapsto (x,y,z)=(0,y,\sqrt{Y}y^2)$. 
 This gives a component of the divisor $C$.
Let  us consider the irreducible decomposition  $\displaystyle \bigcup_{j=0}^6 a_j$ ($\displaystyle \bigcup_{j=0}^9 b_j$, resp.) of the singular fibre $\pi^{-1}_{(X,Y)}(0)$ ($\pi^{-1}_{(X,Y)}(\infty)$, resp.) of type $IV^*$ ($I_5^*$, resp.).
These  curves are illustrated 
 in Figure 1. 
Note that    $a_0\cap O\not=\phi$, $b_0\cap O\not=\phi$,  
 $a_6\cap R\not=\phi$ and $b_9\cap R\not=\phi$.

\begin{figure}
\center
\includegraphics{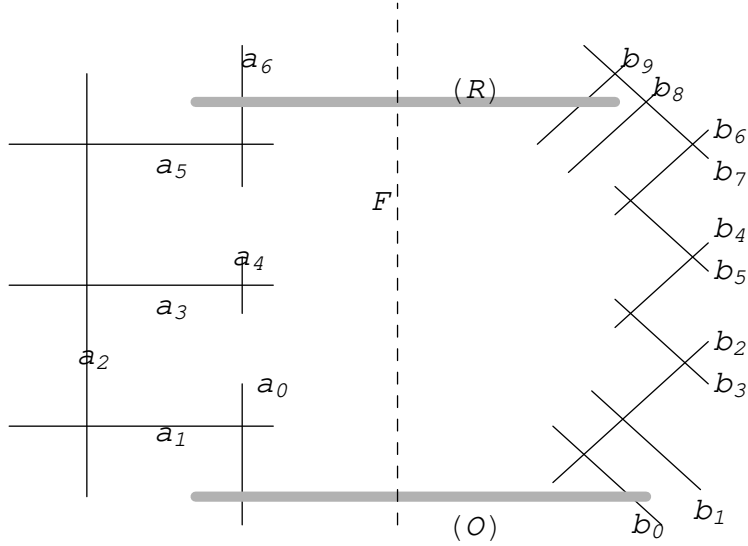} 
\caption{The elliptic fibration given by (\ref{S(a_0;a_1;a_2)}).}
\end{figure}

We set $\Gamma_5=F,\Gamma_6=O,\Gamma_7=R,$
 $\Gamma_{8+k}=a_{k+1}$ $(0\leq k\leq 5),$ $\Gamma_{14+l}=b_{l+1}$ $ (0\leq l \leq 8)$.
We have the lattice $\check{L}=\langle \Gamma_5,\cdots,\Gamma_{22}  \rangle_\mathbb{Z} \subset H_2(\check{S},\mathbb{Z})$.
We can check that $|{\rm det}(\check{L})|=5$. 
Hence, we have 
$$
\check{L}={\rm NS}(\check{S}).
$$
Since $\check{L}$ is a primitive lattice, there exists $\Gamma_1,\cdots,\Gamma_4\in H_2(\check{S},\mathbb{Z})$ such that $\langle \Gamma_1,\cdots,\Gamma_4,\Gamma_5,\cdots,\Gamma_{22} \rangle_{\mathbb{Z}}=H_2(\check{S},\mathbb{Z})$.
Let $\{\Gamma_1^*,\cdots,\Gamma_{22}^*\}$ be the dual basis of $\{\Gamma_1,\cdots,\Gamma_{22}\}$ in $H_2(\check{S},\mathbb{Z})$.
Then,  we see that $\langle \Gamma_1^*,\cdots,\Gamma_4^* \rangle_{\mathbb{Z}}$ is the transcendental lattice. 
We may  assume  that its intersection matrix is
\begin{eqnarray}\label{TrMat}
(\Gamma_j^*\cdot \Gamma_k^*)_{1\leq j,k\leq 4}=A
\end{eqnarray}
 where $A$ is given by (\ref{matrixA}).
We define the period of $\check{S}$  by 
$$
\Phi_1(X_0,Y_0)=\Big(\int_{\Gamma_1}\omega:\cdots:\int_{\Gamma_4}\omega \Big).
$$ 
Take a small connected neighborhood $V_0$ of 
$(X_0,Y_0)$ in 
$\mathfrak{X}$ so that we have a local topological trivialization
\begin{eqnarray}\label{Ltop}
\tau:\{S(p)|p\in V_0\} \rightarrow \check{S}\times V_0.
\end{eqnarray}
Let $\varpi:\check{S}\times V_0\rightarrow \check{S}$ be the canonical projection.
Set $r=\varpi\circ\tau$. Then,
$$
r_p' = r|_{S(p)}
$$
gives a $\mathcal{C}^\infty$-isomorphism of surfaces.
For any $p \in V_0$, we have an isometry $\psi_p:H_2(S(p),\mathbb{Z})\rightarrow H_2(\check{S},\mathbb{Z})$ given by 
$$
\psi_p={r_p}'_{*}.
$$  
We call this isometry the S-marking on $V_0$.
By an analytic continuation along an arc $\alpha\subset \mathfrak{X}$, we define the S-marking on $\mathfrak{X}$. This  depends on the choice of $\alpha$.
The S-mariking  preserves the N\'{e}ron-Severi lattice.
We define the period mapping $\Phi_1:\mathfrak{X}\rightarrow \mathcal{D}_+$ by
$$
p\mapsto \Big( \int_{\psi^{-1}_p(\Gamma_1)}\omega:\cdots:\int_{\psi^{-1}_p(\Gamma_4)}\omega\Big).
$$
This is equal to  the period mapping in (\ref{periodmap}).

Here, we recall the P-marking for $K3$ surfaces, which is defined in \cite{Nagano} Section 5.

\begin{df}\label{P-marking}
Let $S$ be an algebraic $K3$ surface.
An isometry
$$
\psi:H_2(S,\mathbb{Z})\rightarrow H_2(\check{S},\mathbb{Z})
$$ 
is called the P-marking if
\par
{\rm (i)} $\psi ^{-1} ({{\rm NS}(\check{S})})\subset {\rm NS} (S)$,
\par
{\rm (ii)} $\psi^{-1} (F),\psi^{-1} (O), \psi^{-1} (R),\psi^{-1}(a_j)$  $(1\leq j\leq6)$ and $\psi^{-1}(b_j)$  $(1\leq j\leq 9)$ 
are all effective divisors,
\par
{\rm (iii)}  $(\psi^{-1} (F) \cdot C)\geq 0$ for any effective class $C$.
Namely, $\psi^{-1} (F)$ is nef.
\begin{flushleft}
A pair $(S,\psi)$ is called  a P-marked $K3$ surface. 
\end{flushleft}
\end{df}

\begin{df}\label{df:iso,equi}
Two P-marked $K3$ surfaces $(S_1,\psi_{1} )$ and $(S_2,\psi_{2})$ are said to be 
isomorphic if there is a biholomorphic mapping $f:S_1\rightarrow S_2$ with 
$$
\psi_{2} \circ f_{\ast} \circ \psi_{1}^{-1} ={\rm id}_{H_2({\check{S}},\mathbb{Z})}.
$$
Two P-marked $K3$ surfaces $(S_1,\psi_{1} )$ and $(S_2,\psi_{2})$ are said to be 
equivalent if there is a biholomorphic mapping $f:S_1\rightarrow S_2$ with 
$$
(\psi_{2} \circ f_{\ast} \circ \psi_{1}^{-1})|_{ {\rm NS}(\check{S})}  ={\rm id} _{{\rm NS} (\check{S})}.
$$
\end{df}

\begin{rem}
 The other connected component $R'$ of the divisor $C$ given by the section $y\mapsto(x,y,-\sqrt{Y}y^2)$ intersects $a_4$ ($b_8$, resp.) at $y=0$ ($y=\infty$, resp.).
Letting  $q$ be the involution of $S(X,Y)$ given by $(x,y,z)\mapsto (x,y,-z)$, 
we have
$q_*(R')=R,$
$q_*(a_4)=a_6$, $q_*(a_3)=a_5$ and $q_*(b_8)=b_9$.
Then, we can see that $P$-marked $K3$ surfaces $(\check{S},id)$ and $(\check{S},q_*)$ are isomorphic by  $q$. 
This shows that  our argument does not depend on the choice of the curves $R $ or $R'$.
\end{rem}

The period of a P-marked $K3$ surface $(S,\psi)$ is given by
\begin{eqnarray}\label{Periodmap}
\tilde{\Phi'}(S,\psi)=
\Big( \int _{\psi^{-1}(\Gamma_1)}\omega :\cdots :\int_{\psi^{-1}(\Gamma_4)}\omega\Big).
\end{eqnarray} 
It is a point in $\mathcal{D}$.
Let $\mathbb{X}$ be the isomorphism classes of P-marked $K3$ surfaces and let 
$$
[\mathbb{X}]=\mathbb{X}/(P{\rm -marked\hspace{1mm} equivalence}).
$$
By the Torelli theorem for $K3$ surfaces, the period mapping
$\tilde{\Phi'}:\mathbb{X}\rightarrow \mathcal{D}$  
for
 P-marked $K3$ surfaces defined by (\ref{Periodmap}) gives an identification between $\mathbb{X}$ and $\mathcal{D}$.
Moreover, a P-marked $K3$ surface $(S_1,\psi_1)$ is equivalent to a P-marked $K3$ surface $(S_2,\psi_2)$ if and only if 
$$
\tilde{\Phi'}(S_1,\psi_1)=g\circ \tilde{\Phi'}(S_2,\psi_2)
$$
for some $g\in PO(A,\mathbb{Z})$ (see \cite{Nagano} Lemma 5.1).
Therefore, we identify $[\mathbb{X}]$ with
\begin{eqnarray}\label{markedsp}
\mathcal{D}/PO(A,\mathbb{Z})=\mathcal{D}_+/PO^+(A,\mathbb{Z})\simeq (\mathbb{H}\times\mathbb{H})/\langle PSL(2,\mathcal{O}),\tau\rangle.
\end{eqnarray}
Recall that the above isomorphism is given by the modular isomorphism $j$ in (\ref{modulariso}). 
 
  We note that $\mathfrak{X}$ is embedded in $[\mathbb{X}]$ (see \cite{Nagano} Remark 5.3).
Then,   an S-marked $K3$ surface is a P-marked $K3$ surface and 
the period mapping for  P-marked $K3$ surfaces
 is an extension of the period mapping  for  S-marked $K3$ surfaces.
From $\tilde{\Phi'}:\mathbb{X}\rightarrow \mathcal{D}$, we obtain a multivalued mapping  $\Phi':[\mathbb{X}]\rightarrow \mathcal{D}_+$.
We have 
\begin{eqnarray}\label{rest1}
\Phi'|_\mathfrak{X}=\Phi_1,
\end{eqnarray}
where $\Phi_1$ is the period mapping  in (\ref{periodmap}) for S-marked $K3$ surfaces.

\vspace{5mm}

Now, we extend the period mapping $\Phi_1: \mathfrak{X}\rightarrow \mathcal{D}_+$ in (\ref{periodmap}) to
$\Phi:\mathbb{P}(1,3,5)-\{c_0\}\rightarrow \mathcal{D}_+$.
We recall that $(\mathbb{P}(1,3,5)-\{c_0\})-\mathfrak{X}=(K_1\cup K_2\cup \{\mathfrak{A}=0\})-\{c_0\}$.

First, since the local topological trivialization on $\mathfrak{X}$ in (\ref{Ltop}) is naturally  extended to $\{\mathfrak{A}=0\}$, there exist S-markings on $\{\mathfrak{A}=0\}$ and the period mapping (\ref{periodmap}) on $\mathfrak{X}$ is extended to  $\mathbb{P}(1,3,5)-(K_1\cup K_2 \cup \{c_0\})\rightarrow \mathcal{D}_+$.

Let us recall that the projective monodromy group of $\Phi_1$ is isomorphic to $PO^+(A,\mathbb{Z})$.
According to (\ref{markedsp}) and Proposition \ref{noteH} (3) (Proposition \ref{noteH} (2), resp.), the local monodromy of the period mapping $\Phi_1$ in (\ref{periodmap}) around $K_1$  ($K_2$, resp.) is locally finite. 
Hence,  the period mapping $\mathbb{P}(1,3,5)-(K_1\cup K_2 \cup \{c_0\})\rightarrow \mathcal{D}_+$ can be extended to  $\mathbb{P}(1,3,5)-\{c_0\} \rightarrow\mathcal{D}_+$.
We note that this extension is assured by Theorem (9.5) in Griffiths \cite{Griffiths}.

Therefore, we have the extended period mapping
\begin{align}\label{OurPeriod} 
\Phi:\mathbb{P}(1,3,5)-\{c_0\}\rightarrow \mathcal{D}_+
\end{align}
with 
\begin{eqnarray}\label{rest2}
\Phi|_\mathfrak{X} =\Phi_1.
\end{eqnarray}

Since we have (\ref{markedsp}) and Proposition \ref{noteH} (1), 
the P-marked equivalence classes $[\mathbb{X}]$ is identified with $\mathbb{P}(1,3,5)-\{c_0\}$.
 Because we have (\ref{rest1}), (\ref{rest2})
 and
 $\mathfrak{X}$ is a Zariski open set in $\mathbb{P}(1,3,5)-\{c_0\}$,
   $\Phi$ in (\ref{OurPeriod}) is 
 equal to the period mapping $\Phi'$ on  $[\mathbb{X}]$.

Let $[\Phi (p)]\in \mathcal{D}_+/PO^+(A,\mathbb{Z})$ be the  equivalence class of $\Phi(p)\in \mathcal{D}_+$. 
From the above argument, we have the following proposition.

\begin{prop}\label{markingprop}
The period mapping  $\Phi':[\mathbb{X}]\rightarrow \mathcal{D}_+$  for P-marked $K3$ surfaces  is given by the period mapping $\Phi$ in {\rm (\ref{OurPeriod})} for the family $\mathcal{F}=\{S(p)|p\in\mathbb{P}(1,3,5)-\{c_0\}\}$ of $K3$ surfaces.
This is an extension of the period mapping in {\rm (\ref{periodmap})} for S-marked $K3$ surfaces.
Especially, if $[\Phi(p_1)]=[\Phi(p_2)]$ in $\mathcal{D}_+/PO^+(A,\mathbb{Z})$, then $p_1=p_2$.
\end{prop}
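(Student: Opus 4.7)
My plan is to combine the three identifications already established in the text: the Hirzebruch/Klein identification $\overline{(\mathbb{H}\times\mathbb{H})/\langle PSL(2,\mathcal{O}),\tau\rangle}\simeq\mathbb{P}(1,3,5)$ (Proposition \ref{noteH} (1)), the modular isomorphism $j$ of (\ref{modulariso}), and the Torelli-type identification (\ref{markedsp}). Their composition gives a natural bijection
$$
\mathbb{P}(1,3,5)-\{c_0\}\; \longleftrightarrow\; (\mathbb{H}\times\mathbb{H})/\langle PSL(2,\mathcal{O}),\tau\rangle\; \longleftrightarrow\; \mathcal{D}_+/PO^+(A,\mathbb{Z})\; \longleftrightarrow\; [\mathbb{X}],
$$
so that $[\mathbb{X}]$ is canonically identified with the parameter space $\mathbb{P}(1,3,5)-\{c_0\}$ of the family $\mathcal{F}$. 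With this identification in place, the content of the proposition is that the two candidate maps into $\mathcal{D}_+$ coincide.

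First, I would check that on the Zariski open set $\mathfrak{X}\subset\mathbb{P}(1,3,5)-\{c_0\}$ the two maps agree: by (\ref{rest1}) the restriction of $\Phi'$ equals the $S$-marked period mapping $\Phi_1$, and by (\ref{rest2}) the same is true for the restriction of $\Phi$. Hence $\Phi'$ and $\Phi$ agree on $\mathfrak{X}$ as multivalued holomorphic maps with values in $\mathcal{D}_+$.

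Second, I would extend this agreement to all of $\mathbb{P}(1,3,5)-\{c_0\}$. The map $\Phi$ is already a single-valued holomorphic extension across $(K_1\cup K_2\cup\{\mathfrak{A}=0\})-\{c_0\}$, justified (as the text notes) by Griffiths's removable singularity theorem \cite{Griffiths} together with the local finiteness of the monodromy around $K_1$ and $K_2$ inherited from Proposition \ref{noteH} (2), (3) via (\ref{markedsp}). On the other hand, the Torelli-theoretic map $\Phi'$ is holomorphic on $[\mathbb{X}]\simeq\mathbb{P}(1,3,5)-\{c_0\}$. Since both are holomorphic on the connected complex manifold $\mathbb{P}(1,3,5)-\{c_0\}$ and coincide on the nonempty open subset $\mathfrak{X}$, the identity theorem for holomorphic maps forces $\Phi=\Phi'$ everywhere.

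Finally, the injectivity statement is immediate from the Torelli theorem for P-marked $K3$ surfaces recalled around (\ref{markedsp}): two parameters $p_1,p_2$ with $[\Phi(p_1)]=[\Phi(p_2)]$ correspond to $P$-marked $K3$ surfaces $(S(p_1),\psi_{p_1}), (S(p_2),\psi_{p_2})$ whose periods differ by an element of $PO(A,\mathbb{Z})$, hence are $P$-marked equivalent, hence represent the same point of $[\mathbb{X}]=\mathbb{P}(1,3,5)-\{c_0\}$. The only delicate point in the plan is the second step: one must verify that the P-markings on $\mathfrak{X}$, constructed from the $S$-markings via the local topological trivialization (\ref{Ltop}), do not get twisted when extended across the loci $K_1,K_2,\{\mathfrak{A}=0\}$ by analytic continuation. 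This reduces to checking that the singular fibres computed in the proof of Theorem \ref{thm:compact} allow one to transport the distinguished divisor classes $F,O,R,a_j,b_j$ continuously across these loci, which is ensured by the explicit Kodaira normal forms (\ref{Kodaira1}), (\ref{Kodaira2}) and their analogues for $\mathfrak{A}=0$.
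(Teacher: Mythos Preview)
Your argument is essentially the same as the paper's: the proposition is stated as a direct consequence of the discussion preceding it, which (i) identifies $[\mathbb{X}]$ with $\mathbb{P}(1,3,5)-\{c_0\}$ via (\ref{markedsp}) and Proposition~\ref{noteH}(1), (ii) uses (\ref{rest1}) and (\ref{rest2}) to see that $\Phi'$ and $\Phi$ agree on the Zariski open set $\mathfrak{X}$, and (iii) concludes $\Phi=\Phi'$ globally. One small slip: you call $\Phi$ a ``single-valued holomorphic extension,'' but in the paper $\Phi$ remains a \emph{multivalued} map to $\mathcal{D}_+$ (only the induced map to the quotient $\mathcal{D}_+/PO^+(A,\mathbb{Z})$ is single-valued); this does not affect the argument since $\Phi$ and $\Phi'$ carry the same monodromy.
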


For $p\in \mathbb{P}(1,3,5)-\{c_0\}$,
let
$$
\psi_p: H_2(S(p),\mathbb{Z})\rightarrow H_2(\check{S},\mathbb{Z})
$$ 
be a P-marking naturally induced by the above proposition.
The period of $S(p)$ is given by
\begin{eqnarray}\label{EPeriod}
\Phi(p)=\Big( \int_{\psi^{-1}_p(\Gamma_1)}\omega:  \int_{\psi^{-1}_p(\Gamma_2)}\omega:  \int_{\psi^{-1}_p(\Gamma_3)}\omega:   \int_{\psi^{-1}_p(\Gamma_4)}\omega\Big).
\end{eqnarray}

According to  Remark \ref{UDE},
  the multivalued analytic mapping 
$
(j^{-1}\circ \Phi)|_\mathfrak{X} :\mathfrak{X}\rightarrow \mathbb{H}\times\mathbb{H}
$ 
 gives  a developing map of the canonical projection $\Pi:\mathbb{H}\times\mathbb{H}\rightarrow (\mathbb{H}\times\mathbb{H})/\langle PSL(2,\mathcal{O}),\tau \rangle$.
Hence, by Proposition \ref{markingprop}, $(j^{-1}\circ \Phi) |_\mathfrak{X}$ is extended to the analytic mapping 
$$j^{-1}\circ\Phi: \mathbb{P}(1,3,5)-\{c_0\}\rightarrow \mathbb{H}\times \mathbb{H}.$$
This gives a developing map of $\Pi$.

\begin{rem}
 Sato {\rm \cite{Sato}} showed that
the system of differential equations on $\mathfrak{X}$
\begin{eqnarray*}
\begin{cases}
& u_{XX}= L u_{XY} + A u_X + B u_Y +P u,\\
& u_{YY}=M u_{XY} + C u_X +D u_Y+Q u
\end{cases}
\end{eqnarray*}
with $\displaystyle L=\displaystyle \frac{-20 (4 X^2+3 XY -4Y)}{36 X^2 -32 X -Y},
M=\displaystyle \frac{-2(54 X^3 -50 X^2 -3 XY +2Y )}{5 Y (36 X^2 -32 X -Y)}$ 
is an uniformizing differential equation of $\overline{(\mathbb{H}\times \mathbb{H})/\langle PSL(2,\mathcal{O}),\tau\rangle}$.
Namely, taking  linearly independent solutions $y_0,y_1,y_2$ and $y_3$,
the mapping $p\mapsto (y_0(p):\cdots:y_3(p))$ gives a developing map $\mathfrak{X}\rightarrow \mathcal{D}_+$.
Of course, our equation  {\rm (\ref{periodUDE})} is also an unifomizing differential equation in this sense.
But, note that we do not know whether we can extend it to the singular locus 
applying the  theory of the uniformizing differential equations.
Since 
we regard $\mathbb{P}(1,3,5)-\{c_0\}$ as the parameter space of $\mathcal{F}$
and  $p\mapsto (y_0(p):\cdots:y_3(p))$ is the period mapping for $\mathcal{F}$,
we obtain the  extension   of    the solutions  of {\rm (\ref{periodUDE})} to the singular locus.
\end{rem}

Hence, we obtain the following theorem.

\begin{thm}\label{thmC}
The multivalued mapping  $j^{-1}\circ\Phi: \mathbb{P}(1,3,5)-\{c_0\}\rightarrow \mathbb{H}\times \mathbb{H}$
gives the developing map of $\Pi$.
Namely, the inverse mapping of $\Pi: \mathbb{H}\times\mathbb{H}\rightarrow (\mathbb{H}\times\mathbb{H})/\langle PSL(2,\mathcal{O}),\tau\rangle$ is given by $ j^{-1}\circ\Phi$
through the identification $(\mathbb{H}\times \mathbb{H})/\langle PSL(2,\mathcal{O}),\tau \rangle \simeq \mathbb{P}(1,3,5)-\{c_0\}$ given by {\rm Proposition} {\rm \ref{noteH}} {\rm (1)}. 
\end{thm}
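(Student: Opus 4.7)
The plan is to assemble three ingredients already established in Section 2.2, none of which is new by this point of the paper: Remark \ref{UDE} on the uniformizing property of the period differential equation over $\mathfrak{X}$; Proposition \ref{markingprop}, which extends $\Phi_1$ analytically to $\Phi$ on all of $\mathbb{P}(1,3,5)-\{c_0\}$ by interpreting it as the period mapping $\Phi'$ on P-marked equivalence classes; and Proposition \ref{noteH} (1), which identifies $\overline{(\mathbb{H}\times\mathbb{H})/\langle PSL(2,\mathcal{O}),\tau\rangle}$ with $\mathbb{P}(1,3,5)$ as orbifolds with the cusp going to $c_0$.

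First, I would use Remark \ref{UDE} together with the modular isomorphism $j$ in (\ref{modulariso}) to conclude that on the Zariski-open locus $\mathfrak{X}\subset \mathbb{P}(1,3,5)-\{c_0\}$, the multivalued map $(j^{-1}\circ\Phi)|_{\mathfrak{X}}$ is a developing map of the restriction of $\Pi$, i.e.\ $\Pi\circ (j^{-1}\circ\Phi)|_{\mathfrak{X}}$ agrees with the inclusion $\mathfrak{X}\hookrightarrow (\mathbb{H}\times\mathbb{H})/\langle PSL(2,\mathcal{O}),\tau\rangle$ under the identification from Proposition \ref{noteH} (1). Next, I would invoke Proposition \ref{markingprop} to analytically continue $\Phi_1$ to $\Phi$ across the three remaining loci $K_1$, $K_2$, and $\{\mathfrak{A}=0\}$; this step relies on the local finiteness of the monodromy around $K_1$ and $K_2$ (furnished by Proposition \ref{noteH} (2),(3) together with the identification (\ref{markedsp})) and on the direct S-marking construction across $\{\mathfrak{A}=0\}$, with the Griffiths extension theorem cited above ensuring analyticity.

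Finally, I would upgrade the developing-map identity from $\mathfrak{X}$ to the full domain $\mathbb{P}(1,3,5)-\{c_0\}$ by analytic continuation. Since $\mathfrak{X}$ is Zariski-open, and both $\Pi\circ(j^{-1}\circ\Phi)$ and the identity on $\mathbb{P}(1,3,5)-\{c_0\}$ (viewed through Proposition \ref{noteH} (1)) are single-valued analytic maps of orbifolds agreeing on $\mathfrak{X}$, they agree everywhere; equivalently, the multivalued branches of $j^{-1}\circ\Phi$ realize precisely the sheets of the orbifold covering $\Pi$.

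The genuine work lies not in this theorem but in Proposition \ref{markingprop}, whose proof required the geometric identification of P-marked equivalence classes with $\mathbb{P}(1,3,5)-\{c_0\}$. The potential subtlety I would watch for is the one flagged in the remark following Proposition \ref{markingprop}: the theory of uniformizing differential equations per se does not guarantee the developing map extends across singular loci, so the argument must go through the geometric/P-marking route rather than through solutions of (\ref{periodUDE}) alone. Once Proposition \ref{markingprop} is granted, Theorem \ref{thmC} is a formal consequence.
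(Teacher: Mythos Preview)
Your proposal is correct and mirrors the paper's own argument: the paper states Theorem~\ref{thmC} as an immediate consequence of the discussion preceding it, namely that Remark~\ref{UDE} makes $(j^{-1}\circ\Phi)|_{\mathfrak{X}}$ a developing map of $\Pi$ over $\mathfrak{X}$, and Proposition~\ref{markingprop} supplies the analytic extension to $\mathbb{P}(1,3,5)-\{c_0\}$. Your explicit mention of the Zariski-density/analytic-continuation step and of the caveat from the remark after Proposition~\ref{markingprop} is a welcome clarification, but the route is the same.
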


Let $\varDelta$ be the diagonal:
$$
\varDelta =\{(z_1,z_2)\in \mathbb{H}\times \mathbb{H}|z_1=z_2\}.
$$ 
From the above theorem and Proposition \ref{noteH} (3), we have

\begin{cor}\label{Cor Delta}
It holds that
$$
\Pi (\varDelta)=\{(\mathfrak{A}:\mathfrak{B}:0)\}-\{c_0\}
$$
through the identification $(\mathbb{H}\times \mathbb{H})/\langle PSL(2,\mathcal{O}),\tau \rangle \simeq \mathbb{P}(1,3,5)-\{c_0\}$ given by {\rm Proposition} {\rm \ref{noteH}} {\rm (1)}. 
\end{cor}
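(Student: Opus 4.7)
The plan is to reduce the statement to a direct application of Proposition \ref{noteH} (3), by recognizing $\Pi(\varDelta)$ as the branch locus of the $\tau$-quotient.

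First, I would observe that the diagonal $\varDelta\subset \mathbb{H}\times\mathbb{H}$ is exactly the fixed locus of the involution $\tau:(z_1,z_2)\mapsto (z_2,z_1)$, and I would factor the projection $\Pi$ as
\[
\mathbb{H}\times\mathbb{H} \xrightarrow{\;\pi_\Gamma\;} (\mathbb{H}\times\mathbb{H})/PSL(2,\mathcal{O}) \xrightarrow{\;\pi_\tau\;} (\mathbb{H}\times\mathbb{H})/\langle PSL(2,\mathcal{O}),\tau\rangle,
\]
where the second map is a generically two-to-one covering with deck involution induced by $\tau$ (as recalled in Proposition \ref{noteH} (3)). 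For a degree-two quotient of orbifolds, the branch locus of $\pi_\tau$ is exactly the image of the fixed locus of the deck involution. Since $\tau$ extends to $(\mathbb{H}\times\mathbb{H})/PSL(2,\mathcal{O})$ with fixed locus equal to $\pi_\Gamma(\varDelta)$ (plus possibly isolated points coming from accidental $PSL(2,\mathcal{O})$-coincidences, which map into the closed set $\overline{\Pi(\varDelta)}$ anyway), I would conclude that
\[
\Pi(\varDelta) \;=\; \text{branch locus of } \pi_\tau \;\subset\; \overline{(\mathbb{H}\times\mathbb{H})/\langle PSL(2,\mathcal{O}),\tau\rangle}.
\]

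Second, I would invoke Proposition \ref{noteH} (3), which explicitly identifies the branch locus of this canonical projection with the divisor $\{\mathfrak{C}=0\}$ in $\mathbb{P}(1,3,5)$. Under the identification of Proposition \ref{noteH} (1) (equivalently, via $j^{-1}\circ\Phi$ as in Theorem \ref{thmC}), the cusp $c_0=(1:0:0)$ is removed, and $\{\mathfrak{C}=0\}$ in the weighted projective coordinates is the curve $\{(\mathfrak{A}:\mathfrak{B}:0)\}$. Taking the intersection with $\mathbb{P}(1,3,5)-\{c_0\}$ (and noting $c_0$ itself satisfies $\mathfrak{C}=0$) yields $\{(\mathfrak{A}:\mathfrak{B}:0)\}-\{c_0\}$, which is the desired description of $\Pi(\varDelta)$.

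The only real subtlety, and the step that needs the most care, is ensuring that the image of $\varDelta$ coincides set-theoretically with the branch locus rather than being only a component of it. I would address this by noting that $\tau$ is the unique (up to conjugation inside $PSL(2,\mathcal{O})$) involution in $\langle PSL(2,\mathcal{O}),\tau\rangle$ that does not lie in $PSL(2,\mathcal{O})$, so its fixed locus descends to a single irreducible divisor in the quotient and matches the irreducible curve $\{\mathfrak{C}=0\}$ described by Hirzebruch's computation. No further calculation is required.
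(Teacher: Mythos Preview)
Your approach is correct and is essentially the one the paper takes, only spelled out in far more detail. The paper's ``proof'' is the single line ``From the above theorem and Proposition~\ref{noteH} (3), we have\ldots'', and you have correctly unpacked what that line means: $\varDelta$ is the fixed locus of $\tau$, hence $\Pi(\varDelta)$ lands inside the branch divisor of $\pi_\tau$, which Proposition~\ref{noteH} (3) identifies with $\{\mathfrak{C}=0\}$; removing the cusp $c_0$ gives the stated equality.

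One small point: your final paragraph asserts that $\tau$ is the unique (up to $PSL(2,\mathcal{O})$-conjugacy) involution in $\langle PSL(2,\mathcal{O}),\tau\rangle\setminus PSL(2,\mathcal{O})$. This is a nontrivial group-theoretic claim that you do not actually need. The cleaner way to close the argument (which you also hint at) is purely by irreducibility: $\Pi(\varDelta)$ is an irreducible curve contained in the branch locus, and Hirzebruch's description shows $\{\mathfrak{C}=0\}\subset\mathbb{P}(1,3,5)$ is itself an irreducible curve (isomorphic to $\mathbb{P}(1,3)\simeq\mathbb{P}^1$), so the inclusion $\Pi(\varDelta)\subset\{\mathfrak{C}=0\}-\{c_0\}$ forces equality. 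That avoids any appeal to conjugacy classes of involutions and matches the level of detail in the cited references.
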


Due to  Theorem \ref{thmC},
we obtain the system of coordinates $(z_1,z_2)$ of $\mathbb{H}\times\mathbb{H}$ coming from the multivalued period mapping   (\ref{EPeriod})  for the family of $K3 $ surfaces $\{S(p)\}$:
\begin{align}\label{upperhalf}
\big(z_1(p),z_2(p)\big)=\Bigg( -\frac{\displaystyle \int_{\Gamma_3}\omega + \frac{1-\sqrt{5}}{2}\int_{\Gamma_4}\omega}{\displaystyle \int_{\Gamma_2} \omega},-\frac{\displaystyle \int_{\Gamma_3}\omega +\frac{1+\sqrt{5}}{2}\int_{\Gamma_4} \omega}{\displaystyle \int_{\Gamma_2}\omega}\Bigg).
\end{align}
Here, 
for simplicity,
let $\Gamma_j$ denotes the 2-cycle $\psi^{-1}_p(\Gamma_j)$ on $S(p)$ for  $j\in \{1,2,3,4\}$.

According to Proposition \ref{noteH} (1), by adding one cusp,
we have the compactification $\overline{(\mathbb{H}\times\mathbb{H})/\langle PSL(2,\mathcal{O}),\tau\rangle}$.
Then, putting $\Pi\circ j^{-1}\circ\Phi (c_0)=\overline{(\sqrt{-1}\infty,\sqrt{-1}\infty)}$,
 we obtain an extended  mapping 
\begin{eqnarray}\label{Qmap}
\Pi\circ j^{-1}\circ \Phi :\mathbb{P}(1,3,5)\rightarrow \overline{(\mathbb{H}\times\mathbb{
H})/\langle PSL(2,\mathcal{O}),\tau\rangle},
\end{eqnarray}
where $\overline{(\sqrt{-1}\infty,\sqrt{-1}\infty)}$ stands for the $\langle PSL(2,\mathcal{O}),\tau\rangle$ orbit of $(\sqrt{-1}\infty,\sqrt{-1}\infty)$.

\section{The family $\mathcal{F}_X$ and the period differential equation}

In this section, we consider 
the family $\mathcal{F}_X=\{S(X,0)\}$ 
and the diagonal 
$
\varDelta =\{(z_1,z_2)\in\mathbb{H}\times\mathbb{H} | z_1 =z_2  \}.
$

\subsection{The family $\mathcal{F}_X$}

In Section 2, we had the $K3$ surfaces $S(\mathfrak{A}:\mathfrak{B}:\mathfrak{C})$
for $(\mathfrak{A}:\mathfrak{B}:\mathfrak{C})\in \mathbb{P}(1,3,5)-\{c_0\}$ 
and the period mapping (\ref{EPeriod}).
Restricting 
them to $\{\mathfrak{C}=0\}$,
 we obtain the family $\{S(\mathfrak{A}:\mathfrak{B}:0)|(\mathfrak{A}:\mathfrak{B}:0)\not=c_0\}$ of $K3 $ surfaces with
$
S(\mathfrak{A}:\mathfrak{B}:0):z^2 =x^3 -4 y^2 (4y-5\mathfrak{A}) x^2 +20 \mathfrak{B} y^3 x.
$
Then, we have the family $\mathcal{F}_X=\{S(X,0)\}$ of $K3$ surfaces with
$$
S(X,0):z^2 =x^3 -4 y^2 (4y-5) x^2 +20 X y^3 x,
$$
where $X \Big(=\displaystyle \frac{\mathfrak{B}}{\mathfrak{A}^3}\Big) \in \mathbb{P}^1(\mathbb{C})-\{0\}.$
In this section,
we consider the family $\mathcal{F}_X$ and the period mapping  for $\mathcal{F}_X$.

Set $\Sigma=\big(X{\rm-sphere}\hspace{1mm} \mathbb{P}^1(\mathbb{C})\big)-\Big\{0,\displaystyle \frac{25}{27},\infty\Big\}$.
Because we have Propsition \ref{markingprop},
we can prove the following theorem
for the subfamily $\displaystyle\mathcal{F}_X'=\{ S(X,0) | X\in\Sigma \}$
as in \cite{Nagano}.

\begin{thm}\label{diagthm}

{\rm (1)} For a generic point $X\in \Sigma$, 
the intersection matrix  of the 
N\'{e}ron-Severi lattice ${\rm NS}(S(X,0))$ is given by  
$$E_8(-1)\oplus E_8(-1)\oplus U \oplus \langle-2 \rangle$$ 
and
that of
transcendental lattice  ${\rm Tr}(S(X,0))$ is given by $$U\oplus\langle2\rangle=:A_X.$$

{\rm (2)} The projective monodromy group of the multivalued period mapping for $\mathcal{F}_X'$ is
isomorphic to $PO^+(A_X,\mathbb{Z})$.
\end{thm}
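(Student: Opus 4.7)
The strategy is twofold: for part (1) I would analyse the elliptic fibration on $S(X,0)$ determined in the proof of Theorem \ref{thm:compact} to pin down ${\rm NS}(S(X,0))$ and then extract ${\rm Tr}(S(X,0))$ as the orthogonal complement in the $K3$ lattice; for part (2) I would transport the P-marked period theory of Proposition \ref{markingprop} to the subfamily $\mathcal{F}_X'$ and read off its monodromy.

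For (1), Theorem \ref{thm:compact} says that $S(X,0)$ is an elliptic $K3$ surface with singular fibres of type $III^*+3I_1+I_6^*$, whose attached root systems are $E_7$ and $D_{10}$, of ranks $7$ and $10$. By the Shioda--Tate formula,
\[
\operatorname{rk}\bigl({\rm NS}(S(X,0))\bigr)=2+\operatorname{rk}(\mathrm{MW})+7+10.
\]
The factorisation $z^2=x\bigl(x^2-4y^2(4y-5)x+20X y^3\bigr)$ exhibits an explicit $2$-torsion section $\sigma:y\mapsto(0,y,0)$. For a generic $X\in\Sigma$ one expects $\mathrm{MW}=\mathbb{Z}/2\mathbb{Z}=\langle\sigma\rangle$, whence $\operatorname{rk}(\mathrm{NS})=19$. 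Since the discriminants of $E_7$ and $D_{10}$ are $2$ and $4$, the glue provided by $\sigma$ reduces the discriminant of the trivial lattice from $8$ to $2$. The $19$-dimensional even lattice of signature $(1,18)$ and discriminant $2$ containing $E_7(-1)\oplus D_{10}(-1)$ is uniquely determined up to isometry as $E_8(-1)\oplus E_8(-1)\oplus U\oplus\langle-2\rangle$. The transcendental lattice is then the orthogonal complement of this lattice inside the $K3$ lattice $E_8(-1)^{\oplus 2}\oplus U^{\oplus 3}$; the two $E_8(-1)$ summands split off, and what remains is the complement of $U\oplus\langle-2\rangle$ inside $U^{\oplus 3}$, readily identified with $U\oplus\langle2\rangle=A_X$.

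For (2), Proposition \ref{markingprop} endows every $S(X,0)$ with a P-marking inherited from its position in $\mathcal{F}$, so the period mapping for $\mathcal{F}_X'$ is the restriction of $\Phi$ to $\{\mathfrak{C}=0\}\subset\mathbb{P}(1,3,5)-\{c_0\}$. By Corollary \ref{Cor Delta} this locus is identified with the image of the diagonal $\varDelta$ under $\Pi$, and correspondingly the period takes values in the one-dimensional subdomain of $\mathcal{D}_+$ orthogonal to the new algebraic class, which is isomorphic as a Hermitian symmetric space to the domain attached to $A_X$. The monodromy therefore preserves $A_X$ and factors through $PO^+(A_X,\mathbb{Z})$; surjectivity is obtained by producing, via loops around the three singular points $0$, $25/27$, $\infty$ of $\Sigma$, enough independent local monodromies to generate $PO^+(A_X,\mathbb{Z})\simeq PSL(2,\mathbb{Z})$, in direct analogy with the proof of Theorem \ref{NaganoThm}(2) carried out in \cite{Nagano} Section 5.

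The main obstacle lies in part (1): namely verifying that the Mordell--Weil group of the generic $S(X,0)$ is \emph{precisely} $\mathbb{Z}/2\mathbb{Z}$, so that no section of infinite order survives the specialisation $Y\to 0$ from $\mathcal{F}$. Equivalently, one must check that the section $R$ of $\mathcal{F}$ collapses onto $\sigma$ at $Y=0$ and that no new infinite-order section is produced in the limit. Once this is under control, the rest of (1) becomes routine lattice bookkeeping, and (2) reduces to the same Torelli-type argument used for Theorem \ref{NaganoThm}(2).
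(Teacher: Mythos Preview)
Your approach is sound and is precisely the elaboration the paper leaves implicit: the paper itself gives no proof of Theorem \ref{diagthm}, stating only that ``because we have Proposition \ref{markingprop}, we can prove the following theorem \ldots\ as in \cite{Nagano}.'' Your analysis of the $III^*+3I_1+I_6^*$ fibration via Shioda--Tate for part (1), and your transport of the P-marked period theory and Torelli argument for part (2), are exactly the method of \cite{Nagano} (Sections 2--3 and 5 there) carried over to the one-parameter subfamily.

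The obstacle you flag is easily dispatched and need not be left open. Since the trivial lattice already has rank $2+7+10=19$, one has ${\rm rk}\,{\rm NS}(S(X,0))\geq 19$; on the other hand the period of $\mathcal{F}_X'$ varies non-trivially with $X$ (by Corollary \ref{Cor Delta} the image is the one-dimensional diagonal $\varDelta$), so the generic Picard number is at most $19$. Hence it equals $19$ and the Mordell--Weil rank is $0$. For the torsion, Shioda--Tate gives $|\det{\rm NS}|=|\det{\rm Triv}|/|{\rm MW}|^{2}=8/|{\rm MW}|^{2}$, which forces $|{\rm MW}|\in\{1,2\}$; your section $\sigma$ then pins it to $|{\rm MW}|=2$. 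Your guess that $R$ collapses onto $\sigma$ is also correct: the section $R:y\mapsto(0,y,\sqrt{Y}\,y^{2})$ of $\mathcal{F}$ specialises to $(0,y,0)=\sigma$ at $Y=0$, so no infinite-order section survives.
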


From the period mapping $\Phi$ in (\ref{EPeriod}),
the system of coordinates $(z_1,z_2)$ in (\ref{upperhalf}),
 Corollary \ref{Cor Delta} and the above theorem,
we obtain a multivalued period mapping $\Phi_X$ for $ \mathcal{F}_X$ such that
\begin{eqnarray}\label{PeriodDelta}
j^{-1} \circ \Phi_X: \{X | X \in \mathbb{P}^1(\mathbb{C})-\{0\} \} \rightarrow \varDelta,
\end{eqnarray}
where
$\Phi_X$ is given by
$\displaystyle
X\mapsto (\xi_1:\xi_2:\xi_3:\xi_4)=\Big(\int_{\Gamma_1}\omega:\int_{\Gamma_2}\omega:\int_{\Gamma_3}\omega:0 \Big)\in \mathcal{D}_+
$ 
satisfying the Riemann-Hodge relation
$
\displaystyle
\Big(\int_{\Gamma_1}\omega \Big)\Big(\int_{\Gamma_2}\omega \Big)+\Big( \int_{\Gamma_3}\omega \Big)^2=0.
$
The fundamental group $\pi_1(\Sigma,*)$ induces the projective monodromy group $M_X$ for $\Phi_X$.
According to the above theorem (3),
$M_X$ is isomorphic to $PO^+(A_X,\mathbb{Z})$.
From (\ref{upperhalf}),
we have the coordinate $z$ of $\varDelta\simeq \mathbb{H}$:
\begin{eqnarray}\label{z}
z= - \frac{\displaystyle \int_{\Gamma_3 } \omega}{\displaystyle \int_{\Gamma_2}\omega}.
\end{eqnarray}
Recalling (\ref{Qmap}),
we obtain an extended mapping 
$\Pi\circ j^{-1}\circ \Phi_X: \mathbb{P}^1(\mathbb{C})\rightarrow \overline{\varDelta/M_X}$.
We note $\Pi\circ j^{-1}\circ \Phi_X(0)$ is the $M_X$ orbit of $(\sqrt{-1}\infty,\sqrt{-1}\infty)$.
The action of $M_X$ on $\varDelta (\subset \mathbb{H}\times \mathbb{H})$
induces the action of $PSL(2,\mathbb{Z})$ on $\mathbb{H}$,
for we have  the coordinate $z$ in (\ref{z}).
Namely, there exist $\gamma_1,\gamma_2\in\displaystyle \pi_1 (\Sigma,*)$ such that
\begin{eqnarray}\label{loops}
\gamma_1(z)=z+1, \quad\quad\quad \gamma_2(z)=-\frac{1}{z}.
\end{eqnarray}
So, $\overline{\varDelta/M_X}$ is identified with the orbifold $\overline{\mathbb{H}/PSL(2,\mathbb{Z})}\simeq \mathbb{P}^1(\mathbb{C})$.

\begin{rem} \label{monodRem}
The  projective monodromy group $M_X\simeq PO^+(A_X,\mathbb{Z})$  of the 
period mapping $\Phi_X$ is
generated  by two elements:
 \begin{align}\label{monod}
\begin{pmatrix} 1 &-1&2 \\ 0&1&0 \\ 0&-1&1\end{pmatrix}, \quad\quad
\begin{pmatrix} 0&-1&0 \\ -1&0&0 \\ 0&0&-1 \end{pmatrix}.
\end{align}
These are induced by the 
 monodromy matrices in {\rm (\ref{monodromyUDE})}. 
\end{rem}

\subsection{The Gauss hypergeometric equation $\displaystyle{}_2E_1 \Big(\frac{1}{12},\frac{5}{12},1;t\Big)$}

We recall the Gauss hypergeometric equation
\begin{eqnarray}\label{Gauss j}
{}_2E _1\Big( \frac{1}{12}, \frac{5}{12},1 ;t\Big): t(1-t) \frac{d^2}{dt^2}u +(1- \frac{3}{2} t)\frac{d}{dt}u-\frac{5}{144}u=0.
\end{eqnarray}
The Riemann scheme of $\displaystyle {}_2E_1\Big(\frac{1}{12},\frac{5}{12},1;t\big)$ is given by
\begin{align*}
\begin{Bmatrix}
t=0 &t=1& t=\infty\\
\displaystyle 0& 0&{1}/{12}\\
\displaystyle 0 &{1}/{2} &{5}/{12}
\end{Bmatrix}.
\end{align*}

We can take the solutions $y_1(t)$ and $y_2(t)$ of  $\displaystyle {}_2E_1\Big( \frac{1}{12}, \frac{5}{12},1 ;t\Big)$ such that the inverse mapping of  
 the Schwarz mapping 
\begin{eqnarray}\label{z0}
\begin{matrix}
\sigma&: &  \mathbb{C}-\{0,1\}  &\rightarrow & \mathbb{H} \\
 &;&  t &\mapsto  &\displaystyle \sigma (t)=\frac{y_2(t)}{y_1(t)}=z_0 
 \end{matrix}
\end{eqnarray}
is given by
\begin{eqnarray}\label{inverse Schwarz}
z_0\mapsto \frac{1}{J(z_0)},
\end{eqnarray}
where $J(z)$ is the elliptic $J$ function with $\displaystyle J\Big(\frac{1+\sqrt{-3}}{2}\Big)=0,J(\sqrt{-1})=1$ and $J(\sqrt{-1}\infty)=\infty$.

\begin{rem}
The above $J$ function is given by
\begin{eqnarray}\label{J-function}
J(z)=\frac{1}{1728}\Big(\frac{1}{q}+744 +196884 q + \cdots \Big),
\end{eqnarray}
where $q=e^{2 \pi \sqrt{-1} z}$.
\end{rem}

Note that the Schwarz mapping $\sigma$ is a multivalued analytic mapping.
We can choose the single-valued branch of the Schwarz mapping $\sigma$  on $(0,1) \subset \mathbb{R}$ such that  $\sigma(t) \in \sqrt{-1} \mathbb{R}$ and 
\begin{eqnarray}\label{branchSchwarz}
\lim_{t\rightarrow +0} \sigma(t)=\sqrt{-1}\infty, \quad\quad \lim_{t\rightarrow 1-0} \sigma(t)=\sqrt{-1}.
\end{eqnarray}
Then, the single-valued branch of the solutions   $y_1(t)$ and $y_2(t)$ near $(0,1) (\subset \mathbb{R})$ is in the form
\begin{align}\label{branchform}
\begin{cases}
&y_1(t)=u_{11} (t),\\
&y_2(t)= \log(t)\cdot u_{21}(t)+ u_{22}(t),
\end{cases}
\end{align}
where $u_{jk}(t)$ are unit holomorphic functions around  $t=0$ 
and $\log$ stands for the principal value.

The projective monodromy group of $\displaystyle {}_2E_1\Big(\frac{1}{12},\frac{5}{12},1;t\Big)$ is isomorphic to 
$PSL(2,\mathbb{Z})$.
In other words,
the action of the fundamental group $\pi_1(\mathbb{P}^1(\mathbb{C})-\{0,1,\infty\},*)$ on $\mathbb{H}=\displaystyle\Big\{z_0=\frac{y_2}{y_1}\Big\}$ is generated by the two actions
\begin{eqnarray}\label{z0 monod}
z_0\mapsto z_0+1, \quad\quad\quad z_0\mapsto \displaystyle-\frac{1}{z_0},
\end{eqnarray}
if we normalize a basis $y_1,y_2$ of the solutions of $\displaystyle {}_2 E _1\Big(\frac{1}{12},\frac{5}{12},1;t\Big)$ around a base point.

\begin{rem}
 The projective monodromy group
 for the system $(y_2^2(t); -y_1^2(t);y_1(t) y_2(t))$  is 
 generated by the two matrices in {\rm (\ref{monod})}.
\end{rem}

\subsection{The period differential equation}

In this subsection, we determine the period differential equation for the family $\mathcal{F}_X$.
Then, considering the solutions of this period differential equation, we shall obtain the expression of $X$ using the coordinate $z$ in (\ref{z}).

\begin{prop}
On the locus $\{Y=0\}$,
the period differential equation {\rm (\ref {periodUDE})}  is restricted to the following ordinary differential equation of rank $4$:
\begin{align}\label{rest4}
\notag
 &\frac{d^4}{dX^4} u +\frac{3 (243X^2-4060X+2000)}{2X(81X^2-1155X+1000)}\frac{d^3}{dX^3 }u\\
 &\quad+\frac{2034X^2 -40680X+8000}{8X^2(81X^2-1155X +1000)}\frac{d^2}{dX^2}u+\frac{15(3X-80)}{8X^2 (81X^2-1155X +1000)}\frac{d}{dX}u=0.
\end{align}
\end{prop}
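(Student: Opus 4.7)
The plan is to derive the fourth-order ODE by direct elimination from the rank-$4$ PDE system (\ref{periodUDE}) and then specialize to $\{Y=0\}$.

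First, treating $u$ as a function of $X$ with $Y$ held as an auxiliary parameter, I would express the derivatives $u_{XX}$, $u_{XXX}$, $u_{XXXX}$ as $\mathbb{C}(X,Y)$-linear combinations of the four basic quantities $u, u_X, u_Y, u_{XY}$. The first equation of (\ref{periodUDE}) already gives $u_{XX}$ in this form. Differentiating it with respect to $X$ produces $u_{XXX}$ involving the new mixed derivative $u_{XXY}$; this is eliminated by differentiating the first equation with respect to $Y$, which in turn introduces $u_{XYY}$; the latter is handled by differentiating the second equation with respect to $X$ and using the second equation itself to substitute for $u_{YY}$. Applying this chain of substitutions once more produces $u_{XXXX}$ in the same four-dimensional $\mathbb{C}(X,Y)$-span. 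Thus $u, u_X, u_{XX}, u_{XXX}, u_{XXXX}$ are five elements of a four-dimensional $\mathbb{C}(X,Y)$-module and must satisfy a linear dependence; clearing denominators yields a fourth-order ODE in $X$ (with $Y$ as parameter) that every solution of (\ref{periodUDE}) must obey.

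The next step is to restrict the coefficients of the resulting ODE to $Y=0$. Although the coefficients $A_1, C_1, D_1, M_1, Q_1$ of (\ref{periodUDE}) have simple or double poles along $\{Y=0\}$, the elimination procedure should be arranged so that these poles cancel in the final ODE, leaving rational functions of $X$ alone. Moreover, the coefficient of $u$ should vanish at $Y=0$: on the diagonal $\varDelta \subset \mathbb{H}\times \mathbb{H}$ the fourth period $\int_{\Gamma_4}\omega$ is identically zero, which forces constants to lie in the restricted solution space, consistent with the absence of a $u$-term in (\ref{rest4}). Matching the three surviving coefficients with the stated formulas completes the proof.

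The principal obstacle is the sheer bulk of the rational-function algebra: repeated substitution, tracking the factor $36X^2 - 32X - Y$ that pervades every denominator, and verifying the cancellation of $1/Y$ and $1/Y^2$ poles upon restriction. This computation is mechanical and most efficiently verified with a computer algebra system. The conceptual point that guarantees these cancellations is precisely the analytic extension of the period mapping across $\{Y=0\}$ established in Proposition \ref{markingprop}: the periods remain holomorphic along $\{Y=0\} = K_1$, so the restricted ODE must have regular coefficients there.
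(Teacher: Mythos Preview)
Your approach matches the paper's: both differentiate the two equations of (\ref{periodUDE}) repeatedly and eliminate the unwanted mixed partials to obtain a fourth-order relation $a_4(X,Y)u_{XXXX}+\cdots+a_0(X,Y)u=0$ with polynomial coefficients, then set $Y=0$. The paper phrases the elimination as cancelling the list $u_Y,u_{XY},u_{YY},u_{XXY},u_{XYY},u_{YYY},u_{XXXY},u_{XXYY}$ from the derived relations $u_{XX}=E_1u$, $u_{XXX}=\partial_X E_1u$, $u_{XXY}=\partial_Y E_1u$, $u_{YY}=E_2u$, etc., which is exactly your substitution chain recast as linear elimination; your side remark that the vanishing of $\int_{\Gamma_4}\omega$ on $\varDelta$ forces constants into the solution space is not a valid inference (a period restricting to zero only says the restriction map has kernel, not that $1$ solves the ODE), but this is merely a heuristic check and not part of the actual proof.
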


\begin{proof}
Recalling the period differential equation (\ref{periodUDE}),
set
\begin{align*}
\begin{cases}
& E_1 u =L_1 u_{XY}+ A_1 u_X+ B_1 u_Y +P_1 u,\\
& E_2 u =M_1 u_{XY}+C_1 u_X +D_1 u_Y +Q_1 u.
\end{cases}
\end{align*}
Deriving these equations, we have the system of equations
\begin{align*}
\begin{cases}
\vspace{2mm}
&\displaystyle u_{XX}= E_1 u,\quad u_{XXX}=\frac{\partial}{\partial X} E_1 u,\quad u_{XXY}=\frac{\partial }{\partial Y} E_1 u,\quad u_{XXXX}=\frac{\partial^2}{\partial X^2} E_1 u,\quad u_{XXXY}=\frac{\partial^2}{\partial X \partial Y} E_1 u,\\
&\displaystyle u_{YY}=E_2 u, \quad u_{XYY}=\frac{\partial}{\partial X} E_2 u,\quad u_{YYY}=\frac{\partial}{\partial Y} E_2u,\quad u_{XXYY}=\frac{\partial^2}{\partial Y^2} E_1 u=\frac{\partial^2}{\partial X^2} E_2 u.
\end{cases}
\end{align*} 
Our periods satisfy this system.
From this system, canceling the terms $u_Y,u_{XY},u_{YY},u_{XXY},u_{XYY},u_{YYY},$  $u_{XXXY}$ and $u_{XXYY}$, we can obtain the differential equation
$$
a_4(X,Y) u_{XXXX}+a_3 (X,Y) u_{XXX}+a_2(X,Y) u_{XX}+a_1(X,Y) u_X +a_0(X,Y) u=0,
$$
where $a_j(X,Y)$ $(j=1,2,3,4)$ is a polynomial in $X$ and $Y$.
Putting $Y=0$, we have (\ref{rest4}).
\end{proof}

Set 
$$
\check{\eta}_j(X)=\int_{\Gamma_j} \omega \quad\quad\quad (j\in{1,2,3}).
$$
The equation (\ref{rest4}) has the $4$-dimensional space of solutions
generated by $\check{\eta}_1(X),\check{\eta}_2(X),\check{\eta}_3(X)$ and $1$.
The Riemann scheme of $(\ref{rest4})$ is geven by
\begin{eqnarray*}
\begin{Bmatrix}
X=0 & X=25/27 & X=40/3 & X=\infty\\
0&0&0&0\\
1&1/2&1&-5/6\\
1&1&2&-1/2\\
1&2&4&-1/6
\end{Bmatrix}.
\end{eqnarray*}

Setting $\displaystyle X=\frac{25}{27} t $,
the equation (\ref{rest4}) is transposed to
 $$
 W_4 u=0,
 $$
 where
 $$
 W_4=\frac{d^4}{dt^4} +\frac{1620t^3-29232t^2+15552t}{72t^2(t-1)(5t-72)}\frac{d^3}{dt^3}+\frac{565 t^2-12204 t+2592}{36t^2(t-1)(5t-72)}\frac{d^2}{dt^2}+\frac{25t-720}{72t^2(t-1)(5t-72)}\frac{d}{dt}.
$$

Straightforward calculation shows the following.

\begin{prop} Set
\begin{align*}
W_3=\frac{d^3}{dt^3}+\frac{3}{2(t-1)}\frac{d^2}{dt^2}+\frac{5t -36}{36t^2 (t-1)}\frac{d}{dt}+\frac{72-5t}{72 t^3 (t-1)},
\quad
W_1=\frac{d}{dt} +\frac{15t^2 -298t +216}{t(t-1)(5t -72)}.
\end{align*}
It holds that
\begin{eqnarray}\label{W4=W1W3}
W_4= W_1 \circ W_3.
\end{eqnarray}
\end{prop}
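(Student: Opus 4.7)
The plan is to verify $W_4 = W_1 \circ W_3$ by direct expansion of the composition and comparison of coefficients. Set $D = d/dt$ and write $W_3 = D^3 + p_2 D^2 + p_1 D + p_0$, $W_1 = D + q_0$, with $p_2, p_1, p_0$ and $q_0$ the rational functions in $t$ displayed in the statement. Using the elementary operator identity $D \circ (fD^k) = fD^{k+1} + f' D^k$, the expansion is
$$
W_1 \circ W_3 = D^4 + (p_2 + q_0)D^3 + (p_1 + p_2' + q_0 p_2)D^2 + (p_0 + p_1' + q_0 p_1)D + (p_0' + q_0 p_0).
$$

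I would then match this against $W_4 = D^4 + P_3 D^3 + P_2 D^2 + P_1 D$ (noting in particular that $W_4$ has zero constant term), which yields four scalar identities in $t$: namely $p_2 + q_0 = P_3$, $p_1 + p_2' + q_0 p_2 = P_2$, $p_0 + p_1' + q_0 p_1 = P_1$, and $p_0' + q_0 p_0 = 0$. Each is an equality of rational functions whose denominators are products of the simple factors $t$, $t-1$ and $5t-72$; after clearing a common denominator, each reduces to a polynomial identity in $t$ verified by straightforward coefficient comparison.

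The conceptually transparent step is the last identity. Since $W_4$ has no constant term and a $4$-dimensional solution space generated by $\check{\eta}_1(X), \check{\eta}_2(X), \check{\eta}_3(X)$ and the constant $1$, the function $u = 1$ is annihilated by $W_4$. If $W_4 = W_1 \circ W_3$ holds, applying both sides to $u = 1$ yields $W_1(W_3(1)) = W_1(p_0) = p_0' + q_0 p_0 = 0$, i.e.\ $q_0 = -(\log p_0)'$. Computing the logarithmic derivative of $p_0 = (72 - 5t)/(72 t^3 (t-1))$ and combining over the denominator $t(t-1)(5t-72)$ reproduces exactly the expression for $q_0$ in the statement; this both motivates the chosen $W_1$ and confirms the constant-term identity.

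The remaining three identities (for the coefficients of $D^3$, $D^2$ and $D$) are routine; the only obstacle is bookkeeping of rational expressions over a common denominator. Since all the $p_i$ and $q_0$ have only simple poles at $t = 0, 1, 72/5$ (with $p_0$ having a triple pole at $t = 0$), clearing denominators produces polynomial identities of modest degree, and no genuine analytic difficulty arises. This is exactly why the authors describe the computation as \emph{straightforward}.
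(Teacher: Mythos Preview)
Your proposal is correct and follows exactly the approach the paper indicates: the paper offers no argument beyond the phrase ``straightforward calculation shows the following,'' and you have supplied precisely those details by expanding the composition and reducing to four rational-function identities. One small inaccuracy: $p_1$ has a double (not simple) pole at $t=0$, but this does not affect the method.
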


Set $\displaystyle \eta_j(t)=\check{\eta}_j\Big(\frac{25}{27}t \Big)$ for $j\in\{1,2,3\}$.

\begin{prop}
The periods $\eta_1(t),\eta_2(t)$ and $\eta_3(t)$ are the solutions of 
$$
W_3 u=0
$$
satisfying 
\begin{eqnarray}\label{P2ji}
\eta_1 \eta_2 +\eta_3^2=0.
\end{eqnarray}
\end{prop}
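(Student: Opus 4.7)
The statement has two parts: that each $\eta_j$ satisfies $W_3u=0$, and the quadratic Riemann--Hodge relation $\eta_1\eta_2+\eta_3^2=0$. I will treat the latter first, since it is essentially tautological. By the construction of $\Phi_X$ in the preceding subsection, $\Phi_X(X)=(\eta_1:\eta_2:\eta_3:0)$ lies in $\mathcal{D}_+\cap\{\xi_4=0\}$; substituting the matrix $A$ of (\ref{matrixA}) and $\xi_4=0$ into the defining isotropy relation $\xi A\,{}^t\xi=0$ of $\mathcal{D}$ yields $2\eta_1\eta_2+2\eta_3^2=0$, which is (\ref{P2ji}).

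For the linear assertion, note that each $\check\eta_j(X)=\int_{\Gamma_j}\omega$ is a solution of (\ref{rest4}), and the substitution $X=(25/27)t$ converts (\ref{rest4}) into $W_4u=0$, so $\eta_j(t)\in\ker W_4$. The factorization (\ref{W4=W1W3}) gives the codimension-$1$ inclusion $\ker W_3\subset\ker W_4$. A direct computation yields $W_3(1)=(72-5t)/(72t^3(t-1))\not\equiv 0$, so the constant function $1$ is not in $\ker W_3$, and we obtain the vector-space splitting
\[
\ker W_4=\ker W_3\oplus\mathbb{C}\cdot 1.
\]
Each period then decomposes uniquely as $\eta_j=v_j+c_j$ with $v_j\in\ker W_3$ and $c_j\in\mathbb{C}$; what remains is to show $c_j=0$.

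The $c_j$ will be pinned down by monodromy. Since $W_3$ has rational (hence single-valued) coefficients, $\ker W_3$ is preserved by the action of $\pi_1(\Sigma,*)$, while constants are trivially monodromy-invariant. By Remark \ref{monodRem}, the monodromy acts on $(\eta_1,\eta_2,\eta_3)^t$ through the matrices $M$ in (\ref{monod}). Comparing the $\mathbb{C}$-components of the identity $\gamma_*\eta_j=\sum_kM_{jk}\eta_k$ (using $\gamma_*v_j\in\ker W_3$ and $\gamma_*c_j=c_j$) produces the linear system $(M-I)(c_1,c_2,c_3)^t=0$ for every generator $M$. A short check shows that the first generator in (\ref{monod}) fixes only $\mathbb{C}\cdot(1,0,0)^t$, while the second fixes only $\mathbb{C}\cdot(1,-1,0)^t$; their intersection is trivial, so $c_1=c_2=c_3=0$ and $\eta_j=v_j\in\ker W_3$.

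The main obstacle is the monodromy step: once the splitting $\ker W_4=\ker W_3\oplus\mathbb{C}\cdot 1$ is seen to be stable under the action of $\pi_1(\Sigma,*)$, so that the decomposition $\eta_j=v_j+c_j$ is monodromy-compatible, the vanishing of the $c_j$ reduces to an elementary common-fixed-vector computation for the generators in (\ref{monod}). The quadratic relation (\ref{P2ji}) is a direct consequence of $\Phi_X$ landing in $\mathcal{D}_+\cap\{\xi_4=0\}$.
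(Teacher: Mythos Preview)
Your argument is correct and reaches the same conclusion, but the route differs from the paper's. The paper applies Schur's lemma directly: it observes that $W_3$ is a monodromy-equivariant map from the irreducible $3$-dimensional representation $V=\langle\eta_1,\eta_2,\eta_3\rangle_{\mathbb{C}}$ into $\ker W_1$, and since $\dim\ker W_1=1<3$ the image cannot be isomorphic to $V$, forcing $W_3\eta_j=0$. Your approach instead splits $\ker W_4=\ker W_3\oplus\mathbb{C}\cdot 1$ and kills the constant components by an explicit fixed-vector computation with the generators in (\ref{monod}). The paper's argument is shorter and only needs the qualitative fact that $V$ is irreducible; yours trades this for a concrete linear-algebra check, which has the mild advantage that you never have to assert irreducibility (you only use that the trivial representation does not occur in $V$). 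One small caveat: Remark~\ref{monodRem} gives the \emph{projective} monodromy, so the linear action on $(\eta_1,\eta_2,\eta_3)$ could a priori be $\pm M$ for each generator; you should note that the common-fixed-vector computation still yields $\{0\}$ for every choice of signs, so the conclusion is unaffected.
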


\begin{proof}
Let $V=\langle \eta_1,\eta_2,\eta_3 \rangle_\mathbb{C}$ 
and $V'=\langle W_3\eta_1,W_3\eta_2,W_3\eta_3 \rangle_\mathbb{C}$.
Since the linear mapping $W_3:V\rightarrow V'$ given by 
$f \mapsto W_3 f$ is monodromy-equivalent and $V$ is an irreducible representation,
according to Schur's lemma,
we have $V\simeq V'$ or $V'=\{0\}$. 
It follows
from (\ref{W4=W1W3}) that  $V'\subset {\rm Ker}(W_1)$.
Because ${\rm dim}({\rm Ker}(W_1))=1$, we have $V'=\{0\}$.

For $t\mapsto (\eta_1(t):\eta_2(t):\eta_3(t))$ is the period mapping $\Phi_X$, the relation (\ref{P2ji}) is clear.
\end{proof}

\begin{prop}\label{direct}
If $u_1$ and $u_2$ are  solutions of $\displaystyle {}_2E_1\Big(\frac{1}{12},\frac{5}{12},1;t\Big)$, then $t u_1^2(t),  t u^2_2(t)$ and $t u_1(t) u_2(t)$ are  solutions
of the period differential equation $W_3 u =0$. 
\end{prop}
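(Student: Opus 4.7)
The plan is to view the statement as an instance of the classical \emph{symmetric square} construction for linear ODEs. If $u_1, u_2$ span the solution space of a second-order equation $Lu := u'' + p(t)u' + q(t)u = 0$, then any product $v = u_i u_j$ satisfies the third-order equation
\[
\mathrm{Sym}^2 L : \quad v''' + 3p\,v'' + (2p^2 + p' + 4q)\,v' + (4pq + 2q')\,v = 0.
\]
This is derived by direct computation: differentiate $v = u_i u_j$ three times, each time substituting $u_k'' = -p u_k' - q u_k$ from $L u_k = 0$, and eliminate the auxiliary quantity $u_1' u_2'$, which first appears in $v''$ and can be solved for in terms of $v, v', v''$. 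It follows that $u_1^2, u_1 u_2, u_2^2$ span the kernel of $\mathrm{Sym}^2 L$.

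For $L = {}_2E_1(1/12, 5/12, 1; t)$ from \eqref{Gauss j} one reads off
\[
p(t) = \frac{2 - 3t}{2t(1-t)}, \qquad q(t) = -\frac{5}{144\,t(1-t)}.
\]
The next step is a gauge transformation: substituting $v = w/t$ into $\mathrm{Sym}^2 L\,v = 0$ and multiplying through by $t$ produces a third-order equation $\widetilde{W}w = 0$ for $w = tv = t\,u_i u_j$, whose coefficients depend explicitly on $p, q$ and their derivatives. The claim reduces to verifying $\widetilde{W} = W_3$. As a first sanity check, the coefficient of $\partial^2$ in $\widetilde{W}$ equals $3p - 3/t$, which simplifies to $-3/(2(1-t)) = 3/(2(t-1))$, exactly matching the corresponding coefficient of $W_3$.

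The principal obstacle is purely computational: the remaining two coefficients of $\widetilde{W}$ must be simplified and compared with those of $W_3$ after reducing to the common denominator $72\,t^3(t-1)$. No conceptual difficulty arises. As a consistency check, the local exponents agree at every singular point: the products $u_i u_j$ have triple exponent $0$ at $t=0$ and exponents $\{1/6, 1/2, 5/6\}$ at $t=\infty$, and multiplying by $t$ shifts these to $\{1,1,1\}$ at $0$ and $\{-5/6, -1/2, -1/6\}$ at $\infty$, which are precisely the exponents of $W_3$ obtained by removing the trivial exponent $0$ (corresponding to the constant solution) from the Riemann scheme of \eqref{rest4}. Moreover, the remark preceding the proposition already notes that the system $(y_2^2(t),\,-y_1^2(t),\,y_1(t)y_2(t))$ shares projective monodromy with $W_3$, providing conceptual support that such an identification must exist.
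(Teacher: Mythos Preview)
Your proposal is correct and follows essentially the same route as the paper: the paper directly expands $W_3(t u_1 u_2)$ using the product rule together with the Gauss equation and its derivative, reducing everything to a linear combination of $u_i u_j$, $u_i' u_j$, $u_i' u_j'$ with rational-function coefficients that then vanish---this is precisely the symmetric-square-plus-gauge computation you describe, carried out without naming the construction. The paper's verification is equally computational, simply organized as ``apply $W_3$ to $t u_1 u_2$ and simplify to zero'' rather than your ``build $\widetilde{W}$ from $\mathrm{Sym}^2 L$ and compare coefficients with $W_3$.''
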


\begin{proof}
Take any solutions of $\displaystyle {}_2E_1\Big(\frac{1}{12},\frac{5}{12},1;t\Big)$ $u_1(t)$ and $u_2(t)$.
For $j\in\{1,2 \}$,
\begin{eqnarray}\label{Gauss1}
u_j''=\frac{\displaystyle 1-{3t}/{2}}{t(t-1)} u_j' -\frac{5}{144t(t-1)} u_j,
\end{eqnarray}
then
\begin{align}\label{Gauss2}
u_j^{(3)}=\frac{535 t^2 -715 t +288}{144 t^2 (t-1)^2} u_j' +\frac{5(7t-4)}{288 t^2(t-1 )^2}u_j.
\end{align}
Here, by  a straightforward calculation, we have
\begin{align}\label{sym.exp}\notag
W_3 (t u_1 u_2)&=\frac{5}{72 t (t-1)} u_1 u_2+\frac{113 t -36 }{36 t (t-1)} (u_1'u_2+u_1u_2')+\frac{3(3t-2)}{t-1} u_1' u_2' \\
&\quad\quad+\frac{3(3t-2)}{2(t-1)} (u_1''u_2+u_1u_2'')+3t(u_1'u_2''+u_1''u_2')+t(u_1^{(3)}u_2+u_1u_2^{(3)}).
\end{align}
Substituting (\ref{Gauss1}) and (\ref{Gauss2}) for (\ref{sym.exp}), we have $W_3(t u_1 u_2)=0$.
\end{proof}

\begin{rem}
According to {\rm (\ref{rest4})}, the derivation $\displaystyle \frac{d}{dt} \eta_j$ $(j=1,2,3)$ of  the period is a solution of the equation
\begin{align}
\label{restdiff3}
\frac{d^3}{dt^3}v +\frac{1620t^3-29232t^2+15552t}{72t^2(t-1)(5t-72)}\frac{d^2}{dt^2} v
 +\frac{1130t^2-24408 t+5184}{72t^2(t-1)(5t-72)}\frac{d}{dt}v+\frac{25t-720}{72t^2(t-1)(5t-72)}v=0.
 \end{align}
Then, set
\begin{eqnarray*}
S(t)={}_3F_2\Big(\frac{1}{6},\frac{1}{2},\frac{5}{6};1,1;t\Big)+\frac{1}{5} {}_3F_2\Big(\frac{7}{6},\frac{1}{2},\frac{5}{6};1,1;t\Big),
\end{eqnarray*}
 where
 ${ }_3F_2$ is the generalized hypergeometric series:
 $$
{}_3F_2(a_1,a_2,a_3;b_1 ,b_2;t)=\sum_{t=0}^\infty \frac{(a_1,n)(a_2,n)(a_3,n)}{(b_1,n)(b_2,n)n!}t^n.
$$
We see that $S(t)$ is a holomorphic solution of {\rm (\ref{restdiff3})} around $t=0$.
The indefinite integral of $S(t)$ with the integral constant $0$ is given by
\begin{align}\notag
&\displaystyle t\cdot {}_3F_2\Big(\frac{1}{6},\frac{1}{2},\frac{5}{6};1,2;t\Big) +\frac{1}{5} t\cdot {}_3F_2\Big(\frac{7}{6},\frac{1}{2},\frac{5}{6};1,2;t\Big)\\
\notag
&=\displaystyle \frac{6}{5}t\cdot {}_3F_2 \Big(\frac{1}{6},\frac{1}{2},\frac{5}{6};1,1;t\Big)
=\frac{6}{5}t \cdot\Big({}_2F_1\Big(\frac{1}{12}, \frac{5}{12},1;t\Big)\Big)^2.
\end{align}
Here, we applied Clausen's formula.
From the above proposition, this gives a holomorphic solution of $W_3 u=0$ around $t=0$.
\end{rem}

Let $y_1(t)$ and $y_2(t)$ are the single-valued branches of the solutions of $\displaystyle {}_2E_1\Big(\frac{1}{12},\frac{5}{12},1;t\Big)$ near $(0,1)\subset \mathbb{R}$  given in (\ref{branchSchwarz}).
Let
$$
s_1(t)=t y_1^2(t), \quad s_2(t)=t y_1(t)y_2(t),\quad s_3(t)=ty_2^2(t).
$$
Note that, if $t\in(0,1)\subset \mathbb{R}$, we have
\begin{align}\label{log-term}
\begin{cases}
&s_1(t)=t\cdot v_{11}(t),\\
&s_2(t)=t\cdot(\log(t)v_{21}(t)+ v_{22}(t)),\\
&s_3(t)=t\cdot (\log^2(t)v_{31} (t) + \log(t) v_{32}(t) +v_{33}(t)),
\end{cases}
\end{align}
where $v_{jk}(t)$ are unit holomorphic functions around $t=0$.
Moreover, they satisfy
\begin{eqnarray}\label{2ji}
-s_1(t)  s_3(t) +s_2^2(t)=0.
\end{eqnarray}

\begin{lem}
A   branch of the multivalued analytic mapping $t\mapsto (\eta_1(t):\eta_2(t):\eta_3(t))$ satisfies
$$
\big(\eta_1(t):\eta_2(t):\eta_3(t)\big)=\big(s_3(t):-s_1(t):s_2(t)\big)\in\mathbb{P}^2(\mathbb{C}).
$$
\end{lem}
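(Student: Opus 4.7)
The plan is to prove the claimed projective identity via a Schur's lemma argument on a common monodromy representation. Both $(\eta_1,\eta_2,\eta_3)$ and $(s_1,s_2,s_3)$ lie in the three-dimensional solution space $V$ of $W_3 u=0$ (the former by construction, the latter by Proposition \ref{direct}); the linear independence of $y_1,y_2$ forces that of $s_1,s_2,s_3$, so both triples are bases of $V$. Moreover, the quadric relation (\ref{P2ji}) reads $\eta_1\eta_2+\eta_3^2=0$, which under the substitution $(\eta_1,\eta_2,\eta_3)\mapsto(s_3,-s_1,s_2)$ becomes $-s_1 s_3+s_2^2=0$, i.e.\ exactly (\ref{2ji}); so the two triples do parametrize the same conic curve in $\mathbb{P}^2$.

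The central step is to compare the two monodromy representations of $\pi_1(\Sigma,*)$ on $V$. Fix the branches of $y_1,y_2$ from (\ref{branchform})--(\ref{branchSchwarz}). The standard $PSL(2,\mathbb{Z})$-action (\ref{z0 monod}) on $z_0=y_2/y_1$ lifts (after an overall rescaling) to $(y_1,y_2)\mapsto(y_1,y_1+y_2)$ and $(y_1,y_2)\mapsto(y_2,-y_1)$. Direct substitution yields
\begin{align*}
(s_1,s_2,s_3) &\mapsto (s_1,\, s_1+s_2,\, s_1+2s_2+s_3),\\
(s_1,s_2,s_3) &\mapsto (s_3,\, -s_2,\, s_1),
\end{align*}
and rewriting these as matrices acting on the column vector $(s_3,-s_1,s_2)^T$ reproduces exactly the two generators in (\ref{monod}). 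By Remark \ref{monodRem}, these are also the monodromy matrices of $(\eta_1,\eta_2,\eta_3)^T$ along the loops $\gamma_1,\gamma_2$ of (\ref{loops}); the identification of loops uses the fact that the three singular points $\{0,1,\infty\}$ of $W_3 u=0$ in the $t$-parameter coincide with those of ${}_2E_1(1/12,5/12,1;t)$, with matching exponent structure (parabolic, order $2$, order $3$).

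Let $C\in GL(3,\mathbb{C})$ be the change of basis satisfying $(s_3,-s_1,s_2)^T=C(\eta_1,\eta_2,\eta_3)^T$. By the previous step, $C$ commutes with the entire monodromy representation of $\pi_1(\Sigma,*)$ on $V$; since this representation is irreducible (the very fact already invoked in the proof of the preceding proposition), Schur's lemma forces $C=c\,I_3$ for some $c\in\mathbb{C}^\times$, giving the desired projective identity. I expect the main obstacle to lie in the normalisation in the second step: the overall scalars for $y_1,y_2$ and the choice of loops $\gamma_1,\gamma_2$ must be compatible with the period-mapping conventions, for otherwise the two representations are only \emph{conjugate} on $V$ and Schur's lemma gives merely projective equivalence up to an element of $PO^+(A_X,\mathbb{Z})$. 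Pinning down the correct branch can be accomplished by comparing asymptotics near $t=0$ using the log-expansions of (\ref{branchform}) and (\ref{log-term}), which fix both triples on $(0,1)\subset\mathbb{R}$ up to a common real constant.
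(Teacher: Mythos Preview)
Your Schur-based approach is a genuinely different route from the paper's. The paper never invokes Schur's lemma at this point; instead it writes $\eta_j=\sum_k a_{jk}s_k$ and determines the coefficients one by one. It first fixes the branch on $(0,1)\subset\mathbb{R}$ by the cusp condition $\lim_{t\to 0^+}(-\eta_3/\eta_2)=\sqrt{-1}\infty$, and then uses the log-expansions (\ref{log-term}) together with the quadric relation (\ref{P2ji}) to force $a_{23}=a_{33}=a_{22}=0$ (e.g.\ $a_{33}=0$ because $\eta_1\eta_2$ cannot contain a $\log^4(t)$ term). After normalizing $a_{21}=-1$ one has $\eta_2=-s_1$, $\eta_3=a_{31}s_1+a_{32}s_2$, whence $z=a_{32}z_0+a_{31}$; only then is monodromy invoked, and only on the one-dimensional coordinate $z$, to obtain $a_{32}=1$, $a_{31}=0$. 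The quadric relation then pins down $\eta_1=s_3$.

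Your argument is cleaner in outline but the ``central step'' is not justified by the reference you give. Remark \ref{monodRem} only asserts that the matrices in (\ref{monod}) \emph{generate} the projective monodromy group; it does not say that they are the monodromies of the basis $(\eta_1,\eta_2,\eta_3)$ along the particular loops $\gamma_1,\gamma_2$ of (\ref{loops}). The correct justification for the $\eta$-side is different: by (\ref{loops}) the loop $\gamma_1$ acts on $z=-\eta_3/\eta_2$ as $z\mapsto z+1$, and since $(\eta_1:\eta_2:\eta_3)$ parametrizes the conic (\ref{P2ji}), this Möbius action lifts \emph{uniquely} to the first matrix of (\ref{monod}); likewise for $\gamma_2$. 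What remains unjustified is that the \emph{same} $\gamma_1,\gamma_2\in\pi_1(\Sigma,*)$ act on $z_0=y_2/y_1$ by (\ref{z0 monod}); matching singular points and local exponent types only determines these matrices up to conjugacy in the group, which leaves $C$ merely in the normalizer of $PO^+(A_X,\mathbb{Z})$ rather than its center. Establishing the loop-by-loop match requires precisely the local log-analysis near $t=0$ that you postpone to the final sentence---and that is exactly where the paper's proof \emph{starts}. So your route is valid once completed, but the work apparently saved by Schur's lemma is reinvested in the loop identification.
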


\begin{proof}
Because we have Proposition \ref{noteH} (1)
and the coordinate $z$ in (\ref{z}), we take the single-valued branch of the multivalued period mapping 
$t\mapsto \big(\eta_1(t):\eta_2(t):\eta_3(t)\big)$ on $t\in (0,1)\subset\mathbb{R}$ such that
\begin{eqnarray}\label{Pcusp}
\lim_{t\rightarrow +0} -\frac{\eta_3(t)}{\eta_2(t)}=\sqrt{-1}\infty.
\end{eqnarray}
In this proof, we consider  $\eta_1(t),\eta_2(t)$ and $\eta_3(t)$ near $(0,1)(\subset\mathbb{R})$.

According to Proposition \ref{direct}, we have
$$
\eta_j(t)=\sum_{k=1}^3 a_{jk} s_k(t)         \hspace{2.5cm} (j=1,2,3),
$$
where $a_{jk}$ $(j,k=1,2,3)$ are constants.
Since we have (\ref{Pcusp}), we obtain $a_{23}=0$.
So, it follows that $\eta_2(t)=a_{21} s_1(t)+a_{22} s_2(t)$.
From (\ref{log-term}), we see that $\eta_1(t)\eta_2(t)$ does not contain $\log^4(t)$. Then, from (\ref{P2ji}), we have $a_{33}=0$.
Recalling (\ref{Pcusp}) again, we obtain $a_{22}=0$.
Because we consider $y\mapsto \big(\eta_1(t):\eta_2(t):\eta_3(t)\big)\in\mathbb{P}^2(\mathbb{C})$, we assume that $a_{21}=-1$.
Then, the single-valued branches $\eta_j(t)$ $(j=1,2,3)$ are in the form
\begin{align*}
\begin{cases}
&\eta_{1}(t) = a_{11} s_1(t)+a_{12} s_2(t)+a_{13} s_{3}(t),\\
&\eta_{2}(t)=-s_{1}(t),\\
&\eta_{3}(t)=a_{31} s_1(t)+a_{32} s_2(t).
\end{cases}
\end{align*}
Hence, using (\ref{z0}), the coordinate $z$ in (\ref{z}) is given by
\begin{eqnarray*}
z=a_{32}\frac{s_2(z)}{s_1(z)}+ a_{31} =a_{32} z_0 +a_{31}.
\end{eqnarray*}

Considering the actions of $\pi_1(\mathbb{P}^1(\mathbb{C})-\{0,1,\infty\})$ on $z=-\displaystyle \frac{\eta_3}{\eta_2}$ -space  in (\ref{loops}) and $z_0=\displaystyle \frac{y_2}{y_1}$ -space in (\ref{z0 monod}),
we have $a_{31}=0$ and $a_{32}=1$.

Therefore, using (\ref{P2ji}) again, 
we obtain
\begin{eqnarray*}
\eta_1(t)= s_3(t),\quad\quad \eta_2(t)=-s_1(t),\quad\quad \eta_3(t)=s_2(t).
\end{eqnarray*}
\end{proof}

\begin{cor}\label{SchwarzLemma}
A coordinate $z$ in {\rm (\ref{z})} of the diagonal $\varDelta $ $(\simeq\mathbb{H})$ is equal to
\begin{eqnarray*}
z=\frac{y_2(t)}{y_1(t)}. 
\end{eqnarray*}
\end{cor}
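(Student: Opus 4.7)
The plan is to substitute the explicit form of the period integrals obtained in the preceding lemma into the definition of the coordinate $z$ given by (\ref{z}); essentially everything has already been done, and only a one-line computation remains.

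Concretely, I would first invoke the lemma to write, for the chosen single-valued branch on the real interval $(0,1)$,
\begin{equation*}
\eta_2(t) = -s_1(t) = -t\,y_1^2(t), \qquad \eta_3(t) = s_2(t) = t\,y_1(t)\,y_2(t),
\end{equation*}
where $y_1,y_2$ are the normalized solutions of ${}_2E_1(1/12,5/12,1;t)$ chosen in (\ref{branchSchwarz}). Substituting these into the defining formula
\begin{equation*}
z = -\frac{\int_{\Gamma_3}\omega}{\int_{\Gamma_2}\omega} = -\frac{\eta_3(t)}{\eta_2(t)}
\end{equation*}
gives
\begin{equation*}
z = -\frac{t\,y_1(t)\,y_2(t)}{-\,t\,y_1^2(t)} = \frac{y_2(t)}{y_1(t)},
\end{equation*}
which is the claim. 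The identification $\varDelta \simeq \mathbb{H}$ is compatible with this because the branch of $\eta_2,\eta_3$ was normalized so that $z\to\sqrt{-1}\infty$ as $t\to +0$, and the same is true for the Schwarz map $\sigma(t) = y_2(t)/y_1(t)$ by (\ref{branchSchwarz}).

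There is no real obstacle: the substantive work was already carried out in the preceding lemma (matching the period vector $(\eta_1:\eta_2:\eta_3)$ with the symmetric square $(s_3:-s_1:s_2)$ of hypergeometric solutions using the monodromy in (\ref{loops}) and (\ref{z0 monod}) and the Riemann-Hodge relation (\ref{P2ji})). The corollary is then only a matter of taking the ratio $s_2/s_1 = y_2/y_1$ and observing that the two normalizations of the uniformizing coordinate on $\varDelta \simeq \mathbb{H}/PSL(2,\mathbb{Z})$ agree.
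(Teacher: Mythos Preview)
Your proposal is correct and matches the paper's own proof, which simply reads ``From the above lemma, this is clear.'' You have merely made explicit the one-line substitution $z=-\eta_3/\eta_2 = -s_2/(-s_1)=y_2/y_1$ that the paper leaves implicit.
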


\begin{proof}
From  the above lemma, this is clear.
\end{proof}

\begin{thm} \label{j-functionThm}
The inverse of the multivalued period mapping $j^{-1}\circ \Phi_X:X \mapsto (z,z)$ in {\rm (\ref{PeriodDelta})} is given by
\begin{eqnarray*}
X(z,z)=\frac{25}{27} \cdot \frac{1}{J(z)}.
\end{eqnarray*}
\end{thm}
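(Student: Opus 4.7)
The plan is to simply string together the identifications already established in Section 3.2 and Section 3.3. The key point is that Corollary \ref{SchwarzLemma} identifies the upper-half-plane coordinate $z$ on the diagonal $\varDelta\subset \mathbb{H}\times\mathbb{H}$, coming from the period mapping $\Phi_X$, with the ratio $y_2(t)/y_1(t)$ of solutions of the Gauss hypergeometric equation ${}_2E_1\!\left(\tfrac{1}{12},\tfrac{5}{12},1;t\right)$. Thus $z$ equals the value of the Schwarz mapping $\sigma(t)$ in (\ref{z0}).

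First, I would invoke Corollary \ref{SchwarzLemma} to write $z = \sigma(t)$. Next, by (\ref{inverse Schwarz}), the inverse of $\sigma$ is given explicitly by $z \mapsto 1/J(z)$, so that
\begin{equation*}
t = \frac{1}{J(z)}.
\end{equation*}
Finally, recalling from Section 3.3 the normalization $X = \tfrac{25}{27}\, t$ under which the reduced equation (\ref{rest4}) transforms into $W_4 u = 0$, one concludes
\begin{equation*}
X(z,z) = \frac{25}{27}\cdot t = \frac{25}{27}\cdot\frac{1}{J(z)},
\end{equation*}
as claimed.

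There is essentially no computational obstacle left: the main work has already been carried out in the preceding lemma, which identified the correct branch of the period mapping $t\mapsto(\eta_1(t):\eta_2(t):\eta_3(t))$ with $(s_3(t):-s_1(t):s_2(t))$. The only point that requires care — and which I would emphasize when writing out the proof — is the compatibility of the branch choices. Specifically, one should check that the single-valued branch of $\sigma$ on $(0,1)\subset\mathbb{R}$ fixed in (\ref{branchSchwarz}) matches the branch of the period mapping $\Phi_X$ selected through (\ref{Pcusp}); both send $t\to +0$ to $\sqrt{-1}\infty$, so the matching $z = z_0$ coming from Corollary \ref{SchwarzLemma} is the correct one and not a $PSL(2,\mathbb{Z})$-translate. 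Once this is observed the theorem follows immediately.
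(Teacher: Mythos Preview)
Your proposal is correct and matches the paper's proof almost verbatim: the paper simply cites Corollary \ref{SchwarzLemma} together with the inverse Schwarz mapping (\ref{inverse Schwarz}) to obtain $t(z)=1/J(z)$, and then uses the substitution $X=\tfrac{25}{27}t$ to conclude. Your additional remark about the compatibility of branch choices is a helpful clarification, but the paper absorbs that point into the preceding lemma rather than restating it here.
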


\begin{proof}
 From the above Corollary and the inverse Schwarz mapping (\ref{inverse Schwarz}), 
we have
$
t(z)=\displaystyle \frac{1}{J(z)}.$
Therefore, we obtain
$$
X(z,z)=\frac{25}{27} \cdot t(z)=\frac{25}{27}\cdot\frac{1}{J(z)}.
$$
\end{proof}

\section{The theta expressions of $X$ and $Y$}

First, we recall the classical elliptic functions. Let $z\in \mathbb{H}$.

The classical Eisenstein series are given by
$$
G_2(z)=60\sum_{(0,0)\not=(m,n)\in\mathbb{Z}^2} \frac{1}{(m z +n)^4}, \quad\quad G_3(z)=140\sum_{(0,0)\not=(m,n)\in\mathbb{Z}^2} \frac{1}{(mz+n)^6} .
$$
$G_2(z)$ ($G_3(z)$, resp.) is a modular form  of weight $4$ ($6$, resp.) for $PSL(2,\mathbb{Z})$.
The ring of modular forms for $PSL(2,\mathbb{Z})$ is $\mathbb{C}[G_2,G_3]$. 
We have 
$\displaystyle
G_2(\sqrt{-1}\infty)=\frac{4 \pi^4}{3}$ and $\displaystyle G_3(\sqrt{-1}\infty) =\frac{8 \pi^6}{27}.
$
Let $\displaystyle E_4(z)=\frac{3}{4\pi^4} G_2(z)$ and $\displaystyle E_6(z)=\frac{27}{8 \pi^6} G_3(z)$ be the normalized Eisenstein series.
The discriminant form is 
$$
\Delta (z)=G_2^3(z)-27G_3^2(z).
$$
We have $\Delta(\sqrt{-1}\infty)=0$.
This is a cusp form of weight $12$.  
The cusp form of weight $12$ is  $\Delta$ up to a constant factor. 
The $J$ function in (\ref{J-function}) is given by
\begin{eqnarray}\label{J-Def}
J(z)=\frac{G_2^3(z)}{G_2^3(z)-27 G_3^2(z)}=\frac{G_2^3(z)}{\Delta(z)}.
\end{eqnarray}
The field of modular functions for the  modular group $PSL(2,\mathbb{Z})$ is  $\mathbb{C}(J(z)).$

For $a,b \in \{ 0,1 \} $, the Jacobi theta constants are defined by
$$
\vartheta_{ab}(z)=\sum_{n\in\mathbb{Z}} {\rm exp}\Big( \sqrt{-1}\pi \Big( n+\frac{a}{2} \Big)^2 z + 2\sqrt{-1}\pi \Big( n+\frac{a}{2}\Big)\frac{b}{2} \Big)
$$
for $(a,b)=(0,0),(0,1)$ and $(1,0)$.
The functions  $\vartheta_{00}^4(z),\vartheta_{01}^4(z)$ and $\vartheta_{10}^4(z)$ are the modular forms of weight $2$ for the  principal congruence subgroup $\displaystyle \Gamma(2)=\Big\{ \begin{pmatrix} \alpha &\beta\\ \gamma&\delta\end{pmatrix}| \alpha\equiv \delta \equiv 1, \beta\equiv\gamma\equiv 0 \quad({\rm mod}\hspace{0.2cm}2) \Big\}.$
The ring of modular forms for $\Gamma(2)$ is
$$
\mathbb{C}[\vartheta_{00}^4, \vartheta_{01}^4, \vartheta_{10}^4 ]/(\vartheta_{01}^4+\vartheta_{10}^4=\vartheta_{00}^4)=\mathbb{C}[\vartheta_{00}^4,\vartheta_{01}^4].
$$

We note that
$$
\frac{1}{1728} \Big(\frac{3}{4\pi^4}\Big)^3 \Delta (z)= \frac{1}{2^8} \vartheta_{00}^8(z) \vartheta_{01}^8(z) \vartheta_{10}^8 (z).
$$

Next, we survey the theta constants for Hilbert modular forms for $\mathbb{Q}(\sqrt{5})$.
They are introduced by M\"{u}ller \cite{Muller}.

Set 
$$
\mathfrak{S}_2=\{Z\in {\rm Mat} (2,2) |  {}^tZ =Z ,  {\rm Im}(Z)>0 \}.
$$
This is the Siegel upper half plane consisting of $2\times 2$ complex matrices.
For $a,b \in \{0,1\}^2$ with ${}^t a b\equiv0 \hspace{1mm}({\rm mod}2)$,
set
\begin{align*}
\vartheta (Z;a,b)=\sum _{g\in \mathbb{Z}^2} {\rm exp}\Big(\pi \sqrt{-1} \big( {}^t\big( g+\frac{1}{2} a\big) Z \big( g+\frac{1}{2}a \big)+{}^tgb\big)\Big).
\end{align*}

We use the mapping $\psi:\mathbb{H}\times \mathbb{H} \rightarrow \mathfrak{S}_2$
given by
\begin{align*}
 (z_1,z_2) = \zeta & \mapsto \begin{pmatrix}\displaystyle {\rm Tr}\Big(\frac{\varepsilon \zeta}{\sqrt{5}}\Big) & \displaystyle {\rm Tr}\Big(\frac{ \zeta} {\sqrt{5}}\Big) \\  \displaystyle {\rm Tr}\Big(\frac{ \zeta} {\sqrt{5}}\Big) & \displaystyle {\rm Tr}\Big(-\frac{\varepsilon ' \zeta}{\sqrt{5}}\Big) \end{pmatrix}\\
&
\quad\quad=\frac{1}{2 \sqrt{5}}\begin{pmatrix}
(1+\sqrt{5})z_1 -(1-\sqrt{5})z_2 & 2(z_1-z_2)\\
2(z_1-z_2) & (-1+\sqrt{5})z_1 +(1+\sqrt{5})z_2
\end{pmatrix},
\end{align*}
where $\displaystyle \varepsilon =\frac{1+\sqrt{5}}{2}$.

\begin{rem}
Set
\begin{align*}
\mathcal{N}_5=\Big\{\begin{pmatrix} \sigma_1 & \sigma_2 \\ \sigma_2 & \sigma_3 \end{pmatrix} \in \mathfrak{S}_2 \Big| -\sigma_1 +\sigma_2 +\sigma_3=0 \Big\}.
\end{align*}
Let $p$ be the  canonical projection $\mathfrak{S}_2 \rightarrow \mathfrak{S}_2 /Sp(4,\mathbb{Z})$.
Then, the Humbert surface $\mathcal{H}_5=p(\mathcal{N}_5)$ of invariant $5$ gives the moduli space of  principally polarized Abelian surfaces $A$ such that $\mathbb{Q}(\sqrt{5})\subset {\rm End}(A)\otimes \mathbb{Q}$.
We note that the above  $\psi$
is a mapping $\mathbb{H}\times\mathbb{H}\rightarrow \mathcal{N}_5$.
\end{rem}

For $j\in\{0,1,\cdots,9\}$, we set
\begin{align*}
\theta_j (z_1,z_2)=\vartheta (\psi(z_1,z_2 ) ;a,b),
\end{align*}
where the correspondence between $j$ and $(a,b)$ is given by Table 1.
\begin{table}
\center
{\small
\begin{tabular}{lcccccccccc}
\toprule
$j$&$0$&$ 1$ &$2$ &$3$&$4$&$5$&$6$&$7$&$8$&$9$  \\
\midrule
${}^t a$& $(0,0)$ &$(1,1)$ &$(0,0)$&$(1,1)$&$(0,1)$&$(1,0)$&$(0,0)$&$(1,0)$&$(0,0)$ &$(0,1)$ \\
${}^t b$ &$(0,0)$&$(0,0)$&$(1,1)$&$(1,1)$& $(0,0) $& $(0,0)$& $(0,1)$& $(0,1)$ & $(1,0)$&$(1,0)$\\
\bottomrule
\end{tabular}
}
\caption{The correspondence between $j$ and $(a,b)$.}
\end{table}
These theta constants are  holomorphic functions on $\mathbb{H}\times \mathbb{H}$.

Let $a\in \mathbb{Z}$ and $j_1,\cdots,j_r \in\{0,\cdots,9\}$. 
We set  $\theta_{j_1,\cdots,j_r}^a  = \theta_{j_1}^a \cdots \theta_{j_r} ^a$.

Set
$
s_5=2^{-6} \theta_{0123456789}.
$
This is an alternating modular form of weight $5$. 
The following $g_2$ ($s_6,s_{10},s_{15}$, resp.) is   a symmetric Hilbert modular form of weight $2$ ($6,10,15,$ resp.) for  $\mathbb{Q}(\sqrt{5})$:
\begin{align}\label{MTheta}
\begin{cases}
&g_2= \theta_{0145}-\theta_{1279}-\theta_{3478}+\theta_{0268} +\theta_{3569},\\
&s_6=2^{-8} (\theta_{012478}^2 +\theta_{012569}^2 +\theta_{034568}^2 + \theta_{236789}^2 +\theta_{134579}^2),\\
&s_{10}=s_5^2 = 2^{-12} \theta_{0123456789}^2,\\
&s_{15}=-2^{-18} (\theta_{07}^9 \theta_{18}^5 \theta_{24} - \theta_{25}^9 \theta_{16}^5 \theta_{09} +\theta_{58}^9 \theta_{03}^5 \theta_{46} -\theta_{09}^9\theta_{25}^5\theta_{16} +\theta_{09}^9\theta_{16}^5\theta_{25} -\theta_{67}^9 \theta_{23}^5 \theta_{89} \\
&\quad\quad\quad\quad \quad
+\theta_{18}^9\theta_{24}^5 \theta_{07} -\theta_{24}^9 \theta_{18}^5 \theta_{07}
-\theta_{46}^9 \theta_{03}^5 \theta_{58} - \theta_{24}^9 \theta_{07}^5 \theta_{18}
-\theta_{89}^9\theta_{67}^5\theta_{23} -\theta_{07}^9\theta_{24}^5\theta_{18}\\
&\quad\quad\quad\quad \quad
+\theta_{89}^9\theta_{23}^5 \theta_{67} -\theta_{49}^9\theta_{13}^5 \theta_{57}
+\theta_{16}^9\theta_{09}^5\theta_{25} -\theta_{03 } ^9 \theta_{46}^5 \theta_{58}
+\theta_{16}^9\theta_{25}^5 \theta_{09} -\theta_{46}^9\theta_{58}^5\theta_{03}\\
&\quad\quad\quad\quad \quad
-\theta_{25}^9 \theta_{09}^5 \theta_{16} -\theta_{57}^9 \theta_{49}^5 \theta_{13}
+\theta_{67}^9\theta_{89}^5\theta_{23} +\theta_{58} ^9 \theta_{46}^5 \theta_{03}
+\theta_{57}^9  \theta_{13}^5 \theta_{49} -\theta_{23}^9 \theta_{89}^5 \theta_{67}\\
&\quad\quad\quad\quad \quad
+\theta_{18}^9 \theta_{07}^5 \theta_{24} +\theta_{03}^9 \theta_{58}^5 \theta_{46}
+\theta_{23}^9 \theta_{67}^5 \theta _{89} +\theta_{49}^9 \theta_{57}^5 \theta_{13}
-\theta_{13}^9 \theta_{57}^5 \theta_{49 } +\theta_{13}^9 \theta_{49}^5 \theta_{57}).
   \end{cases}
\end{align}

\begin{prop} {\rm(\cite{Muller} Satz 1)} \label{MullerTheta}
{\rm (1) }The ring of the symmetric Hilbert modular forms for $\mathbb{Q}(\sqrt{5})$ is given by
$$
\mathbb{C}[g_2,s_6,s_{10},s_{15}]/(M(g_2,s_6,s_{10},s_{15})=0),
$$
where
\begin{align}\label{Mullerrelation}
\notag&M(g_2,s_6,s_{10},s_{15})\\
&=
s_{15}^2-\Big( 5^5 s_{10}^3 -\frac{ 5^3}{2} g_2 ^2s_6 s_{10}^2 +\frac{1}{2^{4}} g_2^5 s_{10}^2 +\frac{3^2 \cdot5^2}{2} g_2 s_6^3 s_{10 }- \frac{1}{2^{3}} g_2^4 s_6^2 s_{10} -2 \cdot 3^3 s_6^5  +\frac{1}{ 2^{4}} g_2^3s_6^4\Big).  
\end{align}

{\rm (2)} The ring of the  Hilbert modular forms for $\mathbb{Q}(\sqrt{5})$ is given by
$$
\mathbb{C}[g_2,s_5,s_6,s_{15}]/(M(g_2,s_5^2, s_6,s_{15})=0).
$$
\end{prop}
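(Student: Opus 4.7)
The plan is to derive Proposition \ref{MullerTheta} by combining Hirzebruch's abstract description of the ring of symmetric Hilbert modular forms (Proposition \ref{KleinH}) with a direct analysis of the theta constants $\theta_j(z_1,z_2)$ built from the Siegel theta constants $\vartheta(Z;a,b)$ via the embedding $\psi:\mathbb{H}\times\mathbb{H}\to \mathfrak{S}_2$.

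First, I would verify that $g_2, s_6, s_{10}, s_{15}$ are symmetric Hilbert modular forms of weights $2, 6, 10, 15$ respectively. The ingredients are (i) the classical transformation law of $\vartheta(Z;a,b)$ under $\mathrm{Sp}(4,\mathbb{Z})$, which permutes the characteristics $(a,b)$ and multiplies by an automorphy factor of weight $1/2$, and (ii) the fact that the three generators $g_1,g_2,g_3$ of $PSL(2,\mathcal{O})$ together with $\tau$ act on the Humbert locus $\mathcal{N}_5=\psi(\mathbb{H}\times\mathbb{H})$ via explicit elements of $\mathrm{Sp}(4,\mathbb{Z})$. Checking that each of the symmetric combinations in (\ref{MTheta}) is invariant under the induced permutation of the index set $\{0,\ldots,9\}$ (and that the automorphy factors on $\mathcal{N}_5$ consolidate to $(cz_1+d)^k(c'z_2+d')^k$) reduces to a finite combinatorial verification for each generator. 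Symmetry under $\tau$ amounts to observing that the chosen monomial sums are invariant under the involution of characteristics coming from $\tau$.

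Next, by Proposition \ref{KleinH} (2) the graded ring of symmetric Hilbert modular forms is already known abstractly to be generated by elements of weights $2,6,10,15$ subject to the Klein relation $R$. To identify $g_2, s_6, s_{10}, s_{15}$ with Hirzebruch's generators $\mathfrak{A},\mathfrak{B},\mathfrak{C},\mathfrak{D}$ up to nonzero scalars, I would compute the leading Fourier--Jacobi expansions at the cusp $(\sqrt{-1}\infty,\sqrt{-1}\infty)$. Since the weight-$k$ pieces for $k=2,6,10$ are $1$-dimensional (respectively $2$-dimensional in weight $10$), matching the first nontrivial Fourier coefficient of each theta expression against the expansion of $\mathfrak{A},\mathfrak{B},\mathfrak{C}$ pins down the scalar. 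The crucial nondegeneracy input is that $s_{15}$ is not a polynomial in $g_2, s_6, s_{10}$: this follows because $s_{15}$ vanishes on the diagonal $\varDelta$ (its defining sum is antisymmetric under $\tau$ composed with weight-$15$ scaling), whereas the ring $\mathbb{C}[g_2,s_6,s_{10}]$ contains forms that do not vanish on $\varDelta$ in weight $30$.

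The explicit relation (\ref{Mullerrelation}) is then extracted by expanding both $s_{15}^2$ and the right-hand side polynomial in $q_1=e^{2\pi\sqrt{-1}z_1}, q_2=e^{2\pi\sqrt{-1}z_2}$ to sufficient order. The space of symmetric weight-$30$ forms has a computable dimension, and since $s_{15}^2$ lies in $\mathbb{C}[g_2,s_6,s_{10}]$ (as $s_{15}^2$ is symmetric), matching finitely many Fourier coefficients uniquely determines the six rational coefficients $5^5, -5^3/2, 1/2^4, 3^2\cdot 5^2/2, -1/2^3, -2\cdot 3^3, 1/2^4$. This Fourier-coefficient matching is the principal obstacle, as it requires both high precision in the $q$-expansion of the degree-$15$ theta polynomial and careful bookkeeping of normalizations; conceptually it is routine but computationally heavy. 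For part (2), I would observe that $s_{10}=s_5^2$ by the very definition $s_5=2^{-6}\theta_{0123456789}$, and verify directly from the characteristic-permutation computation in step one that $s_5(z_2,z_1)=-s_5(z_1,z_2)$, hence $s_5$ is alternating of weight $5$. Identifying $s_5$ with the form $\mathfrak{c}$ of Proposition \ref{KleinH} (3) up to a nonzero scalar then gives the ring presentation $\mathbb{C}[g_2,s_5,s_6,s_{15}]/(M(g_2,s_5^2,s_6,s_{15})=0)$ directly from (1).
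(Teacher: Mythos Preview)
The paper does not prove Proposition \ref{MullerTheta} at all: it is quoted directly from M\"uller \cite{Muller}, Satz 1, and no proof is supplied. So there is no ``paper's own proof'' to compare against; your proposal goes well beyond what the present paper does, which is simply to import the result as a black box.

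That said, your outline is essentially the strategy M\"uller himself uses (theta transformation laws under $\mathrm{Sp}(4,\mathbb{Z})$ restricted to the Humbert locus, then identification with Hirzebruch's abstract generators, then determination of the relation by Fourier expansion), so as a reconstruction of the cited proof it is on the right track. A couple of small corrections to your sketch: the graded pieces of the symmetric ring in weights $2,6,10$ have dimensions $1,2,3$ respectively (bases $\mathfrak{A}$; $\mathfrak{A}^3,\mathfrak{B}$; $\mathfrak{A}^5,\mathfrak{A}^2\mathfrak{B},\mathfrak{C}$), not $1,1,2$ as you wrote, so pinning down $s_6$ and $s_{10}$ as scalar multiples of $\mathfrak{B}$ and $\mathfrak{C}$ requires an extra argument (e.g.\ that they are cusp forms, or a two-coefficient Fourier comparison) rather than a single leading coefficient. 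Also, your justification that $s_{15}$ is not a polynomial in $g_2,s_6,s_{10}$ needs sharpening: vanishing on $\varDelta$ does not by itself separate it from $\mathbb{C}[g_2,s_6,s_{10}]$, since $s_{10}$ already vanishes on $\varDelta$; the clean argument is simply that there are no odd-weight monomials in $g_2,s_6,s_{10}$.
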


% M\"uller's modular forms have the following properties: 

\begin{prop}\label{boundary} {\rm (\cite{Muller} pp.244-245)}
M\"uller's modular forms satisfy
\begin{align*}
\begin{cases}
g_2(i\infty,i\infty)=1,\\
\displaystyle s_6(z,z)=\frac{2}{1728}\Big(\frac{3}{4 \pi^4 }\Big)^3\Delta(z)=\frac{1}{2^7} \vartheta_{00}^8 (z) \vartheta_{01}^8(z) \vartheta_{10}^8 (z),\\
s_{10}(z,z)=0.
\end{cases}
\end{align*}
Especially,  the relations
\begin{align*}
\begin{cases}
&\vspace{2mm} \displaystyle \frac{4 \pi^4}{3} g_2(z,z)= \frac{4 \pi^4}{3}E_4(z)=  G_2(z), \\
&2^{11} \pi^{12} s_6(z,z) \displaystyle
=G_2^3(z)-27 G_3^2(z)=\Delta(z)
\end{cases}
\end{align*}
hold.
\end{prop}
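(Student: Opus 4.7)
The plan is to evaluate the theta constants explicitly, once at the cusp $(\sqrt{-1}\infty,\sqrt{-1}\infty)$ and once on the diagonal $\{z_1=z_2\}$, and then read off each assertion by an elementary bookkeeping argument from the formulas \eqref{MTheta}.

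The crucial observation is that on the diagonal the map $\psi$ simplifies: $\psi(z,z)=z\, I_2$. Since the genus-$2$ theta series with diagonal period factors as a product of one-variable series, one obtains
\[
\theta_j(z,z)=\vartheta_{a_1 b_1}(z)\,\vartheta_{a_2 b_2}(z)
\]
for the characteristic $\bigl(\,^ta,\,^tb\bigr)=((a_1,a_2),(b_1,b_2))$ recorded in Table~1. Reading down the table and using $\vartheta_{11}\equiv 0$, one finds in particular $\theta_3(z,z)=\vartheta_{11}(z)^2=0$, while all other $\theta_j(z,z)$ are nonzero monomials in $\vartheta_{00},\vartheta_{01},\vartheta_{10}$. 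At the cusp the standard estimate on the leading exponent of $\vartheta(Z;a,b)$ gives $\theta_j(\sqrt{-1}\infty,\sqrt{-1}\infty)=1$ if $^t a=(0,0)$ and $=0$ otherwise; so exactly $\theta_0,\theta_2,\theta_6,\theta_8$ contribute at the cusp.

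With these two tables in hand, each assertion follows by inspection. For $g_2(\sqrt{-1}\infty,\sqrt{-1}\infty)$, among the five monomials in $g_2$ only $\theta_{0268}$ consists entirely of characteristics with $^ta=(0,0)$, so the value is $+1$. For $s_{10}(z,z)$, the factor $\theta_3$ appears in $s_5=2^{-6}\theta_{0123456789}$, hence $s_5(z,z)=0$ and therefore $s_{10}(z,z)=s_5(z,z)^2=0$ (this also reflects the general principle that an alternating form in $z_1,z_2$ vanishes on the diagonal). For $s_6(z,z)$, the three summands $\theta_{034568}^2,\theta_{236789}^2,\theta_{134579}^2$ contain $\theta_3$ and vanish, while a direct multiplication shows that each of the remaining monomials $\theta_{012478}$ and $\theta_{012569}$ restricts on the diagonal to the same value $\vartheta_{00}^4\vartheta_{01}^4\vartheta_{10}^4$. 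Assembling gives $s_6(z,z)=2^{-8}\cdot 2\cdot(\vartheta_{00}^4\vartheta_{01}^4\vartheta_{10}^4)^2=2^{-7}\vartheta_{00}^8\vartheta_{01}^8\vartheta_{10}^8$.

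To pass to the $\Delta$-form one applies the classical Jacobi product identity $\vartheta_{00}\vartheta_{01}\vartheta_{10}=2\eta^3$ together with $\Delta(z)=(2\pi)^{12}\eta^{24}(z)$, which yields
\[
\vartheta_{00}^8\vartheta_{01}^8\vartheta_{10}^8 \;=\; \frac{1}{2^{4}\pi^{12}}\,\Delta(z),
\]
and a direct check that $\tfrac{2}{1728}\bigl(\tfrac{3}{4\pi^4}\bigr)^3=\tfrac{1}{2^{11}\pi^{12}}$ reconciles the two expressions for $s_6(z,z)$. The concluding identities relating $g_2(z,z),s_6(z,z)$ to $G_2,G_3,\Delta$ are then immediate from the normalizations $E_4=\tfrac{3}{4\pi^4}G_2$, $E_6=\tfrac{27}{8\pi^6}G_3$, once one remarks that $g_2(z,z)$ is a holomorphic modular form of weight $4$ for $PSL(2,\mathbb{Z})$ with $g_2(\sqrt{-1}\infty,\sqrt{-1}\infty)=1$, hence must equal $E_4(z)$. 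The main (and only) obstacle is purely bookkeeping: correctly tracking the ten theta characteristics through the long expressions in \eqref{MTheta}; no conceptual difficulty arises.
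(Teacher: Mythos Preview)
The paper does not prove this proposition; it is quoted verbatim from M\"uller \cite{Muller}, pp.~244--245, with no argument supplied. Your proof is correct and fills in what the paper omits. The key computation $\psi(z,z)=zI_2$ is right, the factorisation $\theta_j(z,z)=\vartheta_{a_1b_1}(z)\vartheta_{a_2b_2}(z)$ holds (the potential phase $\exp(-\pi i\,a_jb_j/2)$ arising from the slight mismatch between the paper's genus-$2$ convention ${}^tg\,b$ and the Jacobi convention ${}^t(g+a/2)\,b$ is harmless, since it is nontrivial only for the odd characteristic $(1,1)$, where $\vartheta_{11}=0$ anyway), and the bookkeeping through Table~1 and the five monomials of $g_2$ and $s_6$ is accurate. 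The final identification $g_2(z,z)=E_4(z)$ via one-dimensionality of $M_4(PSL(2,\mathbb{Z}))$ is clean and correct.
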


\vspace{5mm}

Now, we obtain  the  theta expressions   of the parameters $X$ and $Y$ for the family $\mathcal{F}$.
According  to Proposition \ref{KleinH},
$\{X=\displaystyle \frac{\mathfrak{B}}{\mathfrak{A}^3} ,Y= \displaystyle \frac{\mathfrak{C}}{\mathfrak{A}^5}\}$ gives
a system of generators of
symmetric  Hilbert modular functions for $\mathbb{Q}(\sqrt{5}).$
From Theorem \ref{thmC},
 the inverse correspondence
 $(z_1,z_2)\mapsto (X(z_1,z_2),Y(z_1,z_2))$
  of  the multivalued period mapping  for $\mathcal{F}$
defines  the pair of  Hilbert modular  functions of variables  $z_1$ and $z_2$
in (\ref{upperhalf}).
In the following argument,  
we shall obtain the expression of  $X(z_1,z_2)$ and $Y(z_1,z_2)$ as the quotients of  M\"uller's modular forms.

 For our argument, we set $\displaystyle Z=\frac{\mathfrak{D}^2}{\mathfrak{A}^{15}}$.
 This defines a symmertic Hilbert modular function for $\mathbb{Q}(\sqrt{5})$ also.
  
\begin{lem}
The modular functions $X(z_1,z_2),Y(z_1,z_2)$ and $Z(z_1,z_1)$ have the expressions
\begin{align}\label{k1k2k3}
\begin{cases}
\vspace{2mm}
&X(z_1,z_2)=\displaystyle k_1 \frac{s_6(z_1,z_2)}{g_2^3(z_1,z_2)}, \\
\vspace{2mm}
&Y(z_1,z_2)=\displaystyle k_2 \frac{s_{10}(z_1,z_2)}{g_2^5(z_1,z_2)},\\
&Z(z_1,z_2)=\displaystyle k_3 \frac{s_{15}^2(z_1,z_2)}{g_2^{15}(z_1,z_2)},
\end{cases}
\end{align}
for some $k_1,k_2$ and $k_3\in\mathbb{C}$.
\end{lem}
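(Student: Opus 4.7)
The plan is to work entirely inside the ring of symmetric Hilbert modular forms for $\mathbb{Q}(\sqrt{5})$, where $\{\mathfrak{A},\mathfrak{B},\mathfrak{C},\mathfrak{D}\}$ (Proposition \ref{KleinH}) and $\{g_2,s_6,s_{10},s_{15}\}$ (Proposition \ref{MullerTheta}) are two systems of generators whose respective elements have matching weights $2, 6, 10, 15$. Each of $\mathfrak{A},\mathfrak{B},\mathfrak{C},\mathfrak{D}$ is then a polynomial in the M\"uller generators, and I would pin down these polynomials by inspecting the relevant graded pieces one weight at a time.

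First I would enumerate the monomials in $g_2,s_6,s_{10},s_{15}$ spanning each homogeneous piece. Since the relation $M$ in (\ref{Mullerrelation}) has weight $30$, it imposes no constraints in weights $\leq 15$, so the pieces are spanned by $\{g_2\}$ in weight $2$, $\{g_2^3,s_6\}$ in weight $6$, $\{g_2^5,g_2^2 s_6,s_{10}\}$ in weight $10$, and $\{s_{15}\}$ in weight $15$ (the parity forcing $s_{15}$ to appear, since $g_2, s_6, s_{10}$ have even weight). Linear independence in each case follows from evaluation at the cusp and on the diagonal via Proposition \ref{boundary}. The one-dimensional pieces immediately yield $\mathfrak{A}=c_1 g_2$ and $\mathfrak{D}=c_4 s_{15}$ for some nonzero constants $c_1, c_4$.

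To fix $\mathfrak{B}$ and $\mathfrak{C}$, I would extract further information from the cusp and the diagonal. Writing $\mathfrak{B}=\alpha g_2^3+\beta s_6$, the identification in Proposition \ref{noteH} (1) of the cusp $\overline{(\sqrt{-1}\infty,\sqrt{-1}\infty)}$ with $(\mathfrak{A}:\mathfrak{B}:\mathfrak{C})=(1:0:0)$ forces $\mathfrak{B}(\sqrt{-1}\infty,\sqrt{-1}\infty)=0$; combined with $g_2(\sqrt{-1}\infty,\sqrt{-1}\infty)=1$ and $s_6(\sqrt{-1}\infty,\sqrt{-1}\infty)=0$ from Proposition \ref{boundary}, this gives $\alpha=0$. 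For $\mathfrak{C}=\beta_1 g_2^5+\beta_2 g_2^2 s_6+\beta_3 s_{10}$, the same cusp argument kills $\beta_1$. Corollary \ref{Cor Delta} shows that $\mathfrak{C}$ vanishes identically on the diagonal $\varDelta$; restricting, $s_{10}(z,z)\equiv 0$ by Proposition \ref{boundary}, while $g_2^2(z,z)\,s_6(z,z) = E_4(z)^2\cdot s_6(z,z)$ is not identically zero, so $\beta_2=0$ as well.

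Combining, $\mathfrak{A}^k=c_1^k g_2^k$, $\mathfrak{B}=\beta s_6$, $\mathfrak{C}=\beta_3 s_{10}$, $\mathfrak{D}^2=c_4^2 s_{15}^2$, and the ratios defining $X$, $Y$, $Z$ take the asserted form (\ref{k1k2k3}) with $k_1=\beta/c_1^3$, $k_2=\beta_3/c_1^5$, $k_3=c_4^2/c_1^{15}$. The main obstacle I anticipate is the bookkeeping on weights: one must confirm that $M$ does not secretly cut down the dimensions of the weight-$6$, $10$, or $15$ pieces, which is precisely where the observation $\deg M=30$ is essential. The cusp/diagonal comparison that eliminates the ``unwanted'' basis vectors is then the real geometric content.
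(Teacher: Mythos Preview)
Your argument is correct and follows essentially the same approach as the paper: express the numerators in terms of the M\"uller generators spanning the appropriate graded piece, then eliminate the extraneous monomials by evaluating at the cusp (for $\mathfrak{B}$ and the $g_2^5$-term of $\mathfrak{C}$) and along the diagonal (for the $g_2^2 s_6$-term of $\mathfrak{C}$), while the weight-$15$ piece is handled by parity. The only cosmetic differences are that the paper works with the modular functions $X,Y,Z$ rather than the forms $\mathfrak{A},\mathfrak{B},\mathfrak{C},\mathfrak{D}$ themselves, and for $Y$ it kills both unwanted terms simultaneously via the single observation that any weight-$10$ form vanishing on $\varDelta$ is a multiple of $s_{10}$.
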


\begin{proof}
Since $\displaystyle X=\frac{\mathfrak{B}}{\mathfrak{A}^3}$,
the modular function $X$ is given by the quotient of  Hilbert modular forms of weight $6$ and its denominator is the cube of a Hilbert modular form of weight $2$.
Note that,  a Hilbert modular form of weight $2$  is equal to $g_2$ up to a constant factor.
Then, we have 
$$
X(z_1,z_2)=\frac{k_{11} s_6(z_1,z_2)+ k_{12} g_2^3(z_1,z_2)}{k_{13} g_2^3(z_1,z_2) },
$$
where $k_{11},k_{12}$ and $k_{13}$ are constants.
Recalling Proposition \ref{noteH}  (1), we have $X(\sqrt{-1}\infty, \sqrt{-1}\infty)=0$.  
Then, from Proposition \ref{boundary}, we obtain $k_{12}=0$ and
$$
X(z_1,z_2)=k_1 \frac{s_6(z_1,z_2)}{g_2^3(z_1,z_2)}.
$$

Since $\displaystyle Y=\frac{\mathfrak{C}}{\mathfrak{A}^5}$, 
the modular function $Y$ is given by the quotient of  Hilbert modular forms of weight $10$.  Its denominator is the $5$-th power of a modular form of weight $2$.
Then,  we have
$$
Y(z_1,z_2)=\frac{k_{21} s_{10}(z_1,z_2)+ k_{22} g_2^5(z_1,z_2)+ k_{23} g_2^2(z_1,z_2)s_6(z_1,z_2)}{k_{24}g_2^5(z_1,z_2)},
$$
where $k_{21},k_{22},k_{23}$ and $k_{24}$ are constants.
By Proposition \ref{noteH} (3), we have $Y(z,z)=0$.
According to (\ref{MTheta}) and Proposition \ref{boundary}, if a modular form $g$  of weight $10$ vanishes on the diagonal $\varDelta$,  then we have  $g={\rm const} \cdot s_{10} $.
So, it holds that $k_{22}=k_{23}=0$.
Therefore,  we obtain
$$
Y(z_1,z_2)=k_2 \frac{s_{10}(z_1,z_2)}{g_2^5(z_1,z_2)}.
$$

Recalling  Proposition \ref{KleinH} (2), we note that $\mathfrak{D}$ defines a  symmetric Hilbert modular form of weight $15$.
Since $\displaystyle Z=\frac{\mathfrak{D}^2}{\mathfrak{A}^{15}}$,
the modular function $Z$ is given by the quotient of  modular forms of weight $30$.  Its denominator is the $15$-th power of a modular form of weight $2$ and its numerator is given by the square of a symmetric modular form of weight $15$.
According to Proposition \ref{MullerTheta} (2), a symmetric modular form of weight $15$ is given by ${\rm const}\cdot s_{15}$.
Then, we have
$$
Z(z_1,z_2)=k_3\frac{s_{15}^2(z_1,z_2)}{g_2^{15}(z_1,z_2)}.
$$
 \end{proof}

\begin{thm}\label{main}
The inverse correspondence of the multivalued period mapping $j^{-1}\circ \Phi :(X,Y)\mapsto (z_1,z_2)$ in {\rm (\ref{upperhalf}) } for the family $\mathcal{F}$ is given by the quotient of M\"uller's modular forms:
\begin{align*}
\begin{cases}
&\displaystyle X(z_1,z_2)=2^5 \cdot 5^2 \cdot \frac{s_6(z_1,z_2)}{g_2^3(z_1,z_2)},\\
&\displaystyle Y(z_1,z_2)=2^{10} \cdot 5^5  \cdot\frac{s_{10}(z_1,z_2)}{g_2^5 (z_1,z_2)}.
\end{cases}
\end{align*}
\end{thm}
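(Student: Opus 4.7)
\medskip

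\noindent\textbf{Proof Proposal.}
The preceding lemma has already reduced the theorem to the determination of three complex constants $k_1$, $k_2$, $k_3$, where
$$
X=k_1\frac{s_6}{g_2^3}, \qquad Y=k_2\frac{s_{10}}{g_2^5}, \qquad Z:=\frac{\mathfrak{D}^2}{\mathfrak{A}^{15}}=k_3\frac{s_{15}^2}{g_2^{15}}.
$$
So the plan is to pin down $k_1$, $k_2$, $k_3$ by exploiting two different pieces of data already available: restriction to the diagonal $\varDelta$ (which determines $k_1$), and the interplay between the Klein relation $R(\mathfrak{A},\mathfrak{B},\mathfrak{C},\mathfrak{D})=0$ of Proposition~\ref{KleinH} and M\"uller's relation $M(g_2,s_6,s_{10},s_{15})=0$ of Proposition~\ref{MullerTheta} (which determines $k_2$ and $k_3$).

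\medskip

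\noindent\textbf{Step 1 (determination of $k_1$).} Restrict to the diagonal $z_1=z_2=z$. By Proposition~\ref{boundary}, $g_2(z,z)=E_4(z)=\frac{3}{4\pi^4}G_2(z)$ and $s_6(z,z)=\frac{2}{1728}\bigl(\tfrac{3}{4\pi^4}\bigr)^3\Delta(z)$, so that
$$
\frac{s_6(z,z)}{g_2^3(z,z)}=\frac{2}{1728}\cdot\frac{\Delta(z)}{G_2^3(z)}=\frac{2}{1728}\cdot\frac{1}{J(z)},
$$
by the definition (\ref{J-Def}) of $J$. On the other hand, Theorem~\ref{j-functionThm} gives $X(z,z)=\frac{25}{27}\cdot\frac{1}{J(z)}$. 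Matching coefficients forces $k_1=\frac{25}{27}\cdot\frac{1728}{2}=2^5\cdot 5^2$.

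\medskip

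\noindent\textbf{Step 2 (determination of $k_2$ and $k_3$).} The Klein relation (\ref{Klein}), after dividing through by $\mathfrak{A}^{15}$, becomes the polynomial identity
$$
144\,Z=-1728\,X^5+720\,X^3Y-80\,XY^2+64(5X^2-Y)^2+Y^3
$$
in the modular functions $X,Y,Z$. Substituting the three expressions from the lemma and clearing the factor $g_2^{15}$ yields an identity among monomials in $g_2,s_6,s_{10}$ of total weight $30$, whose left-hand side equals $144\,k_3\,s_{15}^2$. Rewriting $144\,k_3\,s_{15}^2$ with the explicit M\"uller relation (\ref{Mullerrelation}) gives a second expansion of the same quantity. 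The two expansions are polynomials in the algebraically independent forms $g_2,s_6,s_{10}$ (with $s_{15}$ eliminated), so comparing coefficients monomial by monomial produces a system of polynomial equations in $k_1,k_2,k_3$. Using the already-known value $k_1=2^5\cdot 5^2$, the coefficient of $s_6^5$ (or equivalently of $g_2^3 s_6^4$) immediately gives $k_3$; the coefficient of $g_2^4 s_6^2 s_{10}$ then yields $k_2=k_1^3/160=2^{10}\cdot 5^5$ with the correct sign (while coefficients such as those of $g_2^5 s_{10}^2$ and $s_{10}^3$ only determine $k_2$ up to sign). The remaining monomials furnish consistency checks.

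\medskip

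\noindent\textbf{Main obstacle.} The only nontrivial point is Step~2: one must verify that the polynomial identity obtained by combining the Klein and M\"uller relations has enough linearly independent monomials to over-determine the unknowns, and in particular that at least one equation is linear in $k_2$ (not just in $k_2^2$) so that the sign of $k_2$ is fixed. Once this is ensured, the rest is a routine comparison of coefficients, with the consistency of the over-determined system serving as an internal check on the whole computation.
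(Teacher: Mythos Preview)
Your proposal is correct and follows essentially the same approach as the paper: Step~1 is identical, and in Step~2 both arguments determine $k_3$ and $k_2$ by matching the Klein relation (\ref{Klein}) (rewritten in $X,Y,Z$) against M\"uller's relation (\ref{Mullerrelation}) after substituting $X=k_1 s_6/g_2^3$, $Y=k_2 s_{10}/g_2^5$, $Z=k_3 s_{15}^2/g_2^{15}$. The only organisational difference is that the paper first restricts both relations to the diagonal $\varDelta$ (where $Y$ and $s_{10}$ vanish) to isolate the $s_6^5$ and $g_2^3 s_6^4$ coefficients and read off $k_3=2^{26}\cdot 5^{10}\cdot 3^{-2}$, and only afterwards invokes the full relations to extract $k_2$; you instead compare all monomial coefficients at once, which amounts to the same computation since the diagonal restriction is precisely what kills every monomial containing $s_{10}$.
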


\begin{proof}
First, we obtain the expression of $X$. To obtain it, we determine  the constant $k_1$ in (\ref{k1k2k3}).
Due to Theorem \ref{j-functionThm}, (\ref{J-Def}) and Proposition \ref{boundary}, we have   
$$
X(z,z)=\frac{25}{27}\cdot\frac{1}{J(z)}=\frac{25}{27}\cdot \frac{\displaystyle 2^{11} \pi^{12} s_6(z,z) } {\displaystyle \Big(\frac{4\pi^4}{3}\Big)^3 g_2^3(z,z) } = 2^5\cdot  5^2\cdot\frac{s_6(z,z)}{g_2^3(z,z)}.
$$
So, we obtain $k_1=2^5\cdot 5^2$.

Next, we determine the constant $k_3$ in (\ref{k1k2k3}). 
By (\ref{Klein}), we have
\begin{align}\label{KleinQ0}
\notag&144 Z(z_1,z_2)=-1728 X^5(z_1,z_2)+720 X^3(z_1,z_2) Y(z_1,z_2) \\
&\hspace{3cm} -80 X(z_1,z_2) Y^2(z_1,z_2) 
+64 (5 X^2(z_1,z_2) -Y(z_1,z_2))^2 +Y^3(z_1,z_2).
\end{align}
Recalling that $Y(z,z)=0$, we have
\begin{align}\label{KleinQ}
\notag 144 Z(z,z)&=-1728X^5(z,z)+64 \cdot 25 \cdot X^4(z,z)\\
&=- 2^{26} \cdot 5^{10} \cdot \Big(2^5 \cdot 3^3\cdot \frac{s_6(z,z)}{g_2^3(z,z)} -1\Big)\Big(\frac{s_6(z,z)}{g_2^3(z,z)} \Big)^4.
\end{align}
On the other hand, from (\ref{Mullerrelation}), we have
\begin{eqnarray}\label{MullerQ}\notag
&\displaystyle \frac{s_{15}^2(z_1,z_2) }{g_2^{15}(z_1,z_2)}\displaystyle=5^5 \Big(\frac{s_{10}(z_1,z_2)}{g_2^5(z_1,z_2)}\Big)^3 - \frac{5^3}{2 } \Big(\frac{s_6(z_1,z_2)}{g_2^3(z_1,z_2)}\Big)\Big( \frac{s_{10}(z_1,z_2)}{g_2^5(z_1,z_2)}\Big)^2 +\frac{3^2\cdot 5^2}{2} \Big(\frac{s_6(z_1,z_2)}{g_2^3(z_1,z_2)}\Big)^2\Big( \frac{s_{10}(z_1,z_2)}{g_2^5(z_1,z_2)}\Big) \\ 
&\displaystyle +\frac{1}{2^4}\Big(\frac{s_{10}(z_1,z_2)}{g_2^5(z_1,z_2)}\Big)^2
-\frac{1}{2^3} \Big(\frac{s_6(z_1,z_2)}{g_2^3(z_1,z_2)}\Big)^2\Big(\frac{s_{10}(z_1,z_2)}{g_2^5(z_1,z_2)} \Big)-2\cdot 3^3 \Big(\frac{s_6(z_1,z_2)}{g_2^3(z_1,z_2)}\Big)^5+\frac{1}{2^4}\Big( \frac{s_6(z_1,z_2)}{g_2^3(z_1,z_2)} \Big)^4.
\end{eqnarray}
So, because $s_{10}(z,z)=0$, we have
\begin{eqnarray}\label{diagQ}
\Big(\frac{s_{15}^2 (z,z)}{g_2^{15}(z,z)}\Big)=\frac{1}{2^4}\Big( -2^5\cdot3^3\frac{s_6(z,z)}{g_2^3(z,z)} +1\Big)\Big(\frac{s_6(z,z)}{g_2^3(z,z)}\Big)^4.
\end{eqnarray}
Since 
$$
Z(z,z) =k_3 \frac{s_{15}^2(z,z)}{g_2^{15}(z,z)},
$$
comparing (\ref{KleinQ}), (\ref{diagQ}), 
we have $k_3=2^{26}\cdot 5^{10} \cdot 3^{-2}$. 

Finally, from (\ref{KleinQ0}), (\ref{MullerQ}), $k_1=2^{5}\cdot 5^2$ and $k_3=2^{26}\cdot 5^{10} \cdot 3^{-2}$, we have
$$
k_2=2^{10}\cdot 5^5.
$$
\end{proof}

Thus, we obtain the explicit theta expression of
the 
inverse correspondence 
$(z_1,z_2)\mapsto \big( X(z_1,z_2),Y(z_1,z_2)\big)$
of the period mapping for our family $\mathcal{F}$ of $K3$ surfaces.

\section*{Acknowledgment}
The author would like to thank Professor Hironori Shiga for helpful advises and  valuable suggestions,
and also to Professor Kimio Ueno and the members of his laboratory for kind encouragements.
He is grateful to the referee for the valuable comments to improve the manuscript.   
This work is supported by Grant-in-Aid for JSPS fellows.

{\small
}

\vspace{5mm}

\begin{center}
\hspace{7.7cm}\textit{Atsuhira  Nagano}\\
\hspace{7cm}\textit{ c.o. Proffessor Kimio Ueno, Department of Mathematics}\\
\hspace{7.7cm}\textit{ Waseda University}\\
\hspace{7.7cm}\textit{Okubo 3-4-1, Shinjuku-ku, Tokyo, 169-8555}\\
\hspace{7.7cm}\textit{Japan}\\
 \hspace{7.7cm}\textit{(E-mail: atsuhira.nagano@gmail.com)}
  \end{center}
 \end{document}